\newtheorem{theorem}{Theorem}
\theoremstyle{plain}
\newtheorem{corollary}{Corollary}
\newtheorem{definition}{Definition}
\newtheorem{lemma}{Lemma}
\newtheorem{proposition}{Proposition}
\newtheorem{remark}{Remark}
\numberwithin{equation}{section}
\begin{document}
\title[Integral inequalities]{On new general integral inequalities for $h-$
convex functions}
\author{\.{I}mdat \.{I}\c{s}can}
\address{Department of Mathematics, Faculty of Arts and Sciences,\\
Giresun University, 28100, Giresun, Turkey.}
\email{imdat.iscan@giresun.edu.tr}
\date{June 10, 2012}
\subjclass[2000]{26A51, 26D15}
\keywords{convex function, $h-$convex function, $s-$convex function, $P-$%
function, Simpson's inequality, Hermite-Hadamard's inequality.}

\begin{abstract}
In this paper, we derive new estimates for the remainder term of the
midpoint, trapezoid, and Simpson formulae for functions whose derivatives in
absolute value at certain power are $h-$convex and we point out the results
for some special classes of functions. Some applications to special means of
real numbers are also given.
\end{abstract}

\maketitle

\section{Introduction}

Let $f:I\subseteq \mathbb{R\rightarrow R}$ be a convex function defined on
the interval $I$ of real numbers and $a,b\in I$ with $a<b$. The following
inequality%
\begin{equation}
f\left( \frac{a+b}{2}\right) \leq \frac{1}{b-a}\dint\limits_{a}^{b}f(x)dx%
\leq \frac{f(a)+f(b)}{2}\text{.}  \label{1-1}
\end{equation}

holds. This double inequality is known in the literature as Hermite-Hadamard
integral inequality for convex functions. See [2]-[7],[11]-[14],[16],[22],
the results of the generalization, improvement and extention of the famous
integral inequality (\ref{1-1}).

In the paper \cite{V07} a large class of non-negative functions, the
so-called $h-$convex functions is considered. This class contains several
well-known classes of functions such as non-negative convex functions, $s-$%
convex in the second sense, Godunova Levin functions and $P-$functions. Let
us recall definitions of these special classes of functions.

\begin{definition}
$f:I\rightarrow R$ is a Godunova--Levin function or that f belongs to the
class $Q(I)$ if f is non-negative and for all $x,y\in I$ and $\alpha \in
(0,1)$ we have%
\begin{equation*}
f\left( \alpha x+(1-\alpha )y\right) \leq \frac{f(x)}{\alpha }+\frac{f(y)}{%
1-\alpha }
\end{equation*}
\end{definition}

\ \ \ \ \ The class $Q(I)$ was firstly described in \cite{GL85} by Godunova
and Levin. Some further properties of it are given in \cite{DPP95,MP90,MPF93}%
. Among others, it is noted that non-negative monotone and non-negative
convex functions belong to this class of functions.

In 1978, Breckner introduced s-convex functions as a generalization of
convex functions as follows \cite{B78}:

\begin{definition}
Let $s\in (0,1]$ be a fixed real number. A function $f:[0,\infty
)\rightarrow \lbrack 0,\infty )$ is said to be $s-$convex (in the second
sense),or that $f$ belongs to the class $K_{s}^{2}$, if \ 
\begin{equation*}
f\left( \alpha x+(1-\alpha )y\right) \leq \alpha ^{s}f(x)+(1-\alpha )^{s}f(y)
\end{equation*}
\end{definition}

for all $x,y\in \lbrack 0,\infty )$ and $\alpha \in \lbrack 0,1]$.

Of course, s-convexity means just convexity when s = 1. In \cite{DPP95}
Dragomir et al. defined the concept of $P-$function as the following:

\begin{definition}
We say that $f:I\rightarrow 
%TCIMACRO{\U{211d} }%
%BeginExpansion
\mathbb{R}
%EndExpansion
$ is a $P-$function, or that f belongs to the class $P(I)$, if $f$ is a
non-negative function and for all $x,y\in I$, $\alpha \in \lbrack 0,1]$, we
have%
\begin{equation*}
f\left( \alpha x+(1-\alpha )y\right) \leq f(x)+f(y).
\end{equation*}
\end{definition}

Let $I$ and $J$ be intervals in $%
%TCIMACRO{\U{211d} }%
%BeginExpansion
\mathbb{R}
%EndExpansion
$, $(0,1)\subseteq J$ and $h$ and $f$ be real non-negative functions defined
on $J$ and $I$, respectively. In \cite{V07}, Varo\v{s}anec defined the
concept of $h-$convexity as follows:

\begin{definition}
Let $h:J\rightarrow 
%TCIMACRO{\U{211d} }%
%BeginExpansion
\mathbb{R}
%EndExpansion
$ be a non-negative function, $h\neq 0.$ We say that $f:I\rightarrow 
%TCIMACRO{\U{211d} }%
%BeginExpansion
\mathbb{R}
%EndExpansion
$ is an $h-$convex function or that $f$ belongsto the class $SX(h,I)$, if $f$
is non-negative function and for all $x,y\in I$ and $\alpha \in (0,1)$ we
have%
\begin{equation}
f\left( \alpha x+(1-\alpha )y\right) \leq h(\alpha )f(x)+h(1-\alpha )f(y).
\label{1-a}
\end{equation}
\end{definition}

If inequality (\ref{1-a}) is reversed, then $f$ is said to be $h-$concave,
i.e.$f\in SV(h,I)$. The notion of $\ h-$convexity unifies and generalizes
the known classes of functions, $s-$convex functions,Gudunova-Levin
functions and $P-$functions, which are obtained by putting in (\ref{1-a}), $%
h(t)=t$, $h(t)=t^{s}$, $h(t)=\frac{1}{t}$, and $h(t)=1$, respectively.

In \cite{DF99}, Dragomir and Fitzpatrick proved a variant of Hadamard's
inequality which holds for $s-$convex functions in the second sense.

\begin{theorem}
Suppose that $f:[0,\infty )\rightarrow \lbrack 0,\infty )$ is an $s-$convex
function in the second sense, where $s\in (0,1)$, and let $a,b\in \lbrack
0,\infty )$, $a<b.$If $f\in L[a,b]$ then the following inequalities hold%
\begin{equation}
2^{s-1}f\left( \frac{a+b}{2}\right) \leq \frac{1}{b-a}\dint%
\limits_{a}^{b}f(x)dx\leq \frac{f(a)+f(b)}{s+1}.  \label{1-2a}
\end{equation}
\end{theorem}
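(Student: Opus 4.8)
The plan is to prove the Hermite–Hadamard-type inequality \eqref{1-2a} for $s$-convex functions by establishing its two halves separately, exactly as one does in the classical convex case ($s=1$), but carefully tracking the factors of $s$ that arise from the $s$-convexity inequality $f(\alpha x+(1-\alpha)y)\leq \alpha^s f(x)+(1-\alpha)^s f(y)$.

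For the \emph{right-hand} inequality, I would parametrize the segment $[a,b]$ by writing $x=\alpha a+(1-\alpha)b$ for $\alpha\in[0,1]$, so that $dx=-(b-a)\,d\alpha$ and a change of variables gives
\begin{equation*}
\frac{1}{b-a}\int_a^b f(x)\,dx=\int_0^1 f\bigl(\alpha a+(1-\alpha)b\bigr)\,d\alpha.
\end{equation*}
Applying the defining inequality for $s$-convexity pointwise under the integral yields the bound $\int_0^1\bigl[\alpha^s f(a)+(1-\alpha)^s f(b)\bigr]d\alpha$, and since $\int_0^1\alpha^s\,d\alpha=\int_0^1(1-\alpha)^s\,d\alpha=\tfrac{1}{s+1}$, this collapses to $\tfrac{f(a)+f(b)}{s+1}$, which is the desired upper bound.

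For the \emph{left-hand} inequality, the idea is to apply $s$-convexity at the midpoint. Writing the midpoint as $\frac{a+b}{2}=\frac{1}{2}\bigl(\alpha a+(1-\alpha)b\bigr)+\frac{1}{2}\bigl((1-\alpha)a+\alpha b\bigr)$ and using $f(\tfrac{u+v}{2})\leq \bigl(\tfrac12\bigr)^s\bigl(f(u)+f(v)\bigr)=2^{-s}\bigl(f(u)+f(v)\bigr)$, I would obtain
\begin{equation*}
2^s f\!\left(\frac{a+b}{2}\right)\leq f\bigl(\alpha a+(1-\alpha)b\bigr)+f\bigl((1-\alpha)a+\alpha b\bigr).
\end{equation*}
Integrating both sides over $\alpha\in[0,1]$ and noting that each of the two terms on the right integrates to $\frac{1}{b-a}\int_a^b f(x)\,dx$ (by the same change of variables, the second integral being a reflection of the first), I get $2^s f(\tfrac{a+b}{2})\leq \tfrac{2}{b-a}\int_a^b f$, which after dividing by $2$ gives the factor $2^{s-1}$ in front of $f(\tfrac{a+b}{2})$.

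The steps here are all routine substitutions and elementary integral evaluations, so there is no serious analytic obstacle; the integrability hypothesis $f\in L[a,b]$ guarantees every integral above is finite. The one point that requires genuine care — and where the $s$-dependence is easy to mishandle — is the left-hand inequality, where the single midpoint evaluation of the $s$-convexity definition produces the coefficient $2^{-s}$ rather than the familiar $\tfrac12$; getting the exponent arithmetic right ($2^{-s}$ on the right becomes $2^{s-1}$ after clearing and halving) is the crux. I would therefore double-check the midpoint decomposition and the exponent bookkeeping most carefully.
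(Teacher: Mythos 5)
Your proof is correct: the upper bound follows exactly as you say, by the change of variables $x=\alpha a+(1-\alpha)b$ and pointwise application of $s$-convexity under the integral, and the lower bound follows from the midpoint decomposition $\frac{a+b}{2}=\frac{u+v}{2}$ with $u=\alpha a+(1-\alpha)b$, $v=(1-\alpha)a+\alpha b$, which gives $2^{s}f\left(\frac{a+b}{2}\right)\leq f(u)+f(v)$ and integrates to the factor $2^{s-1}$ in \eqref{1-2a}. The paper itself states this theorem without proof, citing Dragomir and Fitzpatrick \cite{DF99}, and your argument is essentially the standard one given in that reference, so there is nothing to flag.
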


The constant $k=\frac{1}{s+1}$ is the best possible in the second inequality
in (\ref{1-2a}).

\begin{theorem}
Let $f\in Q(I)$, $a,b\in I$ with $a<b$ and $f\in L[a,b].$ Then%
\begin{equation*}
f\left( \frac{a+b}{2}\right) \leq \frac{4}{b-a}\dint\limits_{a}^{b}f(x)dx.
\end{equation*}
\end{theorem}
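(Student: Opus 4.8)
The plan is to mimic the classical proof of the left-hand side of the Hermite--Hadamard inequality (\ref{1-1}), replacing the midpoint convexity estimate $f(\frac{x+y}{2})\le\frac12 f(x)+\frac12 f(y)$ by the corresponding Godunova--Levin estimate. Concretely, I would specialize the defining inequality of the class $Q(I)$ to $\alpha=\frac12$, which is legitimate since $\frac12\in(0,1)$, obtaining $f(\frac{x+y}{2})\le 2f(x)+2f(y)$ for all $x,y\in I$. The factor $2$ appearing in place of the convex $\frac12$ is precisely the source of the constant $4$ in the conclusion.

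Next I would introduce, for $t\in[0,1]$, the symmetric pair $x=ta+(1-t)b$ and $y=(1-t)a+tb$, both lying in $[a,b]\subseteq I$, and observe that their midpoint is $\frac{x+y}{2}=\frac{a+b}{2}$ independently of $t$. Substituting into the $\alpha=\frac12$ estimate yields the pointwise bound
\begin{equation*}
f\left(\frac{a+b}{2}\right)\le 2f\bigl(ta+(1-t)b\bigr)+2f\bigl((1-t)a+tb\bigr),
\end{equation*}
valid for each $t\in(0,1)$, with the endpoints contributing nothing to the integral below.

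Integrating this inequality over $t\in[0,1]$ (the left side being constant) and applying the affine changes of variable $u=ta+(1-t)b$ and $u=(1-t)a+tb$, each of which transforms $\int_0^1 f\,dt$ into $\frac{1}{b-a}\int_{a}^{b}f(u)\,du$, I would arrive at
\begin{equation*}
f\left(\frac{a+b}{2}\right)\le \frac{2}{b-a}\int_{a}^{b}f(u)\,du+\frac{2}{b-a}\int_{a}^{b}f(u)\,du=\frac{4}{b-a}\int_{a}^{b}f(u)\,du,
\end{equation*}
which is exactly the assertion.

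There is no serious obstacle here: the argument is a direct transcription of the convex case. The only point needing a little care is the evaluation of the two integrals, where one must check that the orientation-reversing substitution $u=ta+(1-t)b$ produces a sign that is absorbed by swapping the limits of integration, so that both integrals genuinely equal the mean value $\frac{1}{b-a}\int_{a}^{b}f$. The hypothesis $f\in L[a,b]$ together with the non-negativity built into the class $Q(I)$ guarantees that these integrals are finite and that all the manipulations are justified.
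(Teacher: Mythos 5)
Your proof is correct. Specializing the defining inequality of $Q(I)$ to $\alpha=\tfrac{1}{2}$ gives $f\left(\tfrac{x+y}{2}\right)\leq 2f(x)+2f(y)$; the symmetric pair $x=ta+(1-t)b$, $y=(1-t)a+tb$ pins the midpoint at $\tfrac{a+b}{2}$; and integrating over $t\in[0,1]$, with the two affine substitutions each producing $\frac{1}{b-a}\int_{a}^{b}f(u)\,du$, yields the constant $4$. (One minor remark: the pointwise bound actually holds for every $t\in[0,1]$, since it is the fixed parameter $\alpha=\tfrac{1}{2}$ that must lie in $(0,1)$, not $t$; your caveat about the endpoints is unnecessary, though harmless.) Be aware, however, that the paper contains no proof of this statement to compare against: it is recalled from the literature on Godunova--Levin functions and stated without argument. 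The nearest route available inside the paper would be to specialize the Hadamard-type inequality (\ref{1-2}) for $h$-convex functions to $h(t)=\frac{1}{t}$: the class $Q(I)$ is exactly $SX(h,I)$ for this choice of $h$, and since $2h\left(\tfrac{1}{2}\right)=4$, the left-hand side of (\ref{1-2}) is precisely the asserted inequality. Your argument is, in substance, the proof of that left-hand inequality carried out for this particular $h$ (midpoint estimate applied to a symmetric pair, then integration in $t$), so the two approaches coincide at the level of ideas; yours has the virtue of being short and self-contained, while the specialization of (\ref{1-2}) gets the result for free once the general $h$-convex Hermite--Hadamard inequality is granted.
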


\begin{theorem}
Let $f\in P(I)$, $a,b\in I$ with $a<b$ and $f\in L[a,b].$ Then%
\begin{equation*}
f\left( \frac{a+b}{2}\right) \leq \frac{2}{b-a}\dint\limits_{a}^{b}f(x)dx%
\leq 2\left[ f(a)+f(b)\right] .
\end{equation*}
\end{theorem}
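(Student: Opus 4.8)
The plan is to establish the two bounds separately, each as a direct consequence of the defining inequality of the class $P(I)$, namely $f(\alpha x+(1-\alpha)y)\le f(x)+f(y)$ for all $x,y\in I$ and $\alpha\in[0,1]$. No additional machinery beyond this inequality and an elementary change of variable in the integral is needed.

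For the right-hand inequality I would observe that every point $x\in[a,b]$ admits a representation $x=\alpha a+(1-\alpha)b$ with $\alpha\in[0,1]$. The $P$-function property then yields the pointwise bound $f(x)\le f(a)+f(b)$, valid uniformly for all $x\in[a,b]$. Integrating this constant bound over $[a,b]$ gives $\int_a^b f(x)\,dx\le(b-a)\left[f(a)+f(b)\right]$, and dividing by $(b-a)$ and multiplying by $2$ produces exactly the asserted upper estimate.

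For the left-hand inequality I would exploit the symmetry of the midpoint. Writing $\frac{a+b}{2}$ as the average of the two reflected points $ta+(1-t)b$ and $(1-t)a+tb$, I can apply the $P$-function inequality with $\alpha=\tfrac12$ to obtain
$$f\left(\frac{a+b}{2}\right)\le f\bigl(ta+(1-t)b\bigr)+f\bigl((1-t)a+tb\bigr)$$
for every $t\in[0,1]$. Integrating this inequality in $t$ over $[0,1]$ and performing the linear substitutions $u=ta+(1-t)b$ and $u=(1-t)a+tb$ in the two resulting integrals, each of which reproduces $\frac{1}{b-a}\int_a^b f(u)\,du$, gives $f\left(\frac{a+b}{2}\right)\le\frac{2}{b-a}\int_a^b f(u)\,du$.

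I expect no serious obstacle: both halves follow almost immediately once the right representations are in place. The only point requiring a little care is the choice of the two symmetric arguments in the left-hand estimate -- one must check that their average is indeed $\frac{a+b}{2}$ and that each substitution returns the full integral $\frac{1}{b-a}\int_a^b f$ -- but this is routine. The coefficient $2$ appearing in both bounds is simply the specialization to $h\equiv 1$ of the general $h$-convex Hadamard inequality, whose constants involve $h(1/2)$ and $\int_0^1 h(\alpha)\,d\alpha$; for a $P$-function both of these equal $1$.
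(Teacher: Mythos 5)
Your proof is correct. Note first that the paper contains no proof of this statement at all: it is quoted as background from the literature (it is the $P$-function Hadamard inequality of Dragomir, Pe\v{c}ari\'{c} and Persson \cite{DPP95}), and within the paper it can also be read off as the special case $h(t)\equiv 1$ of the $h$-convex Hermite--Hadamard inequality \eqref{1-2} of Sarikaya et al., since then $2h\left(\tfrac{1}{2}\right)=2$ and $\int_0^1 h(t)\,dt=1$ --- exactly the observation you make in your closing remark. Your self-contained argument is sound on both sides: the pointwise bound $f(x)\le f(a)+f(b)$ for $x=\alpha a+(1-\alpha )b\in [a,b]$ integrates to the right-hand estimate, and the symmetric decomposition $\frac{a+b}{2}=\frac{1}{2}\left[ ta+(1-t)b\right] +\frac{1}{2}\left[ (1-t)a+tb\right]$ used with $\alpha =\tfrac{1}{2}$ (admissible, since the paper's definition of $P(I)$ allows $\alpha \in [0,1]$), integrated in $t$ over $[0,1]$ with each substitution reproducing $\frac{1}{b-a}\int_{a}^{b}f(u)\,du$, gives the left-hand estimate. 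In fact your argument is precisely the standard proof of the general inequality \eqref{1-2} specialized to $h\equiv 1$: replacing $f(x)+f(y)$ by $h(t)f(x)+h(1-t)f(y)$ throughout yields the $h$-convex version with constants $\frac{1}{2h(1/2)}$ and $\int_{0}^{1}h(t)\,dt$. So the trade-off is this: the paper gains brevity by citing the result and subsuming it under the general $h$-convex theorem, while your route gives a short, fully self-contained proof that uses nothing beyond the defining inequality of $P(I)$ and a linear change of variables.
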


In \cite{GL85}, Dragomir et. al. proved two inequalities of Hadamard type
for classes of Godunova-Levin functions and $P-$functions.

In \cite{SSY08}, Sarikaya et. al. established a new Hadamard-type inequality
for $h-$convex functions.

\begin{theorem}
Let $f\in SX(h,I)$, $a,b\in I$ with $a<b$ and $f\in L([a,b])$. Then%
\begin{equation}
\frac{1}{2h\left( \frac{1}{2}\right) }f\left( \frac{a+b}{2}\right) \leq 
\frac{1}{b-a}\dint\limits_{a}^{b}f(x)dx\leq \left[ f(a)+f(b)\right]
\dint\limits_{0}^{1}h(t)dt.  \label{1-2}
\end{equation}
\end{theorem}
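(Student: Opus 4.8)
The plan is to prove the Hadamard-type inequality \eqref{1-2} for an $h$-convex function $f\in SX(h,I)$ by handling the two inequalities separately, using the defining inequality \eqref{1-a} together with the standard substitution $x=ta+(1-t)b$ that parametrizes the segment $[a,b]$ and transforms $\frac{1}{b-a}\int_a^b f(x)\,dx$ into $\int_0^1 f(ta+(1-t)b)\,dt$.

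For the right-hand inequality, I would apply the $h$-convexity bound \eqref{1-a} directly inside the integral. Writing $x=ta+(1-t)b$ with $\alpha=t$ gives $f(ta+(1-t)b)\le h(t)f(a)+h(1-t)f(b)$, and integrating both sides over $t\in[0,1]$ yields
\begin{equation*}
\frac{1}{b-a}\dint\limits_{a}^{b}f(x)\,dx=\dint\limits_{0}^{1}f\bigl(ta+(1-t)b\bigr)\,dt\leq f(a)\dint\limits_{0}^{1}h(t)\,dt+f(b)\dint\limits_{0}^{1}h(1-t)\,dt.
\end{equation*}
The substitution $u=1-t$ shows $\int_0^1 h(1-t)\,dt=\int_0^1 h(u)\,du$, so the two integrals coincide and collapse to the factor $\int_0^1 h(t)\,dt$ multiplying $f(a)+f(b)$, which is exactly the stated bound.

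For the left-hand inequality, the idea is to evaluate $f$ at the midpoint by exploiting the symmetry of the segment. For each $t$, the points $ta+(1-t)b$ and $(1-t)a+tb$ average to $\frac{a+b}{2}$, so taking $\alpha=\frac12$, $x=ta+(1-t)b$, $y=(1-t)a+tb$ in \eqref{1-a} gives $f\!\left(\frac{a+b}{2}\right)\le h\!\left(\frac12\right)\bigl[f(ta+(1-t)b)+f((1-t)a+tb)\bigr]$. Integrating over $t\in[0,1]$ and noting that both integrals on the right equal $\frac{1}{b-a}\int_a^b f(x)\,dx$ (again by the linear change of variables), I obtain $f\!\left(\frac{a+b}{2}\right)\le 2h\!\left(\frac12\right)\cdot\frac{1}{b-a}\int_a^b f(x)\,dx$; dividing through by $2h\!\left(\frac12\right)$ produces the claimed lower bound.

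The only genuine subtlety, rather than a true obstacle, is the role of $h\!\left(\frac12\right)$: the argument tacitly requires $h\!\left(\frac12\right)\neq 0$ so that the division is legitimate, which is consistent with the non-negativity and non-triviality assumptions on $h$ in the definition of $SX(h,I)$. Beyond that, every step is a routine application of \eqref{1-a} and the change of variables, so I expect no hard part; the proof is essentially a direct specialization of the classical Hermite-Hadamard argument with the weights $t,1-t$ replaced by $h(t),h(1-t)$.
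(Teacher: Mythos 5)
Your proof is correct. The paper itself states this theorem without proof, quoting it from the reference \cite{SSY08} (Sarikaya, Saglam and Y\i ld\i r\i m), and your argument is precisely the standard one given there: integrate the defining inequality (\ref{1-a}) with $\alpha=t$ over $[0,1]$ for the right-hand bound, and apply (\ref{1-a}) with $\alpha=\tfrac12$ to the symmetric pair $ta+(1-t)b$, $(1-t)a+tb$ and integrate for the left-hand bound; your remark that one needs $h\left(\tfrac12\right)\neq 0$ (and, implicitly, $h\in L[0,1]$, which the paper assumes) is the right caveat.
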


The following inequality is well known in the literature as Simpson's
inequality .

Let $f:\left[ a,b\right] \mathbb{\rightarrow R}$ be a four times
continuously differentiable mapping on $\left( a,b\right) $ and $\left\Vert
f^{(4)}\right\Vert _{\infty }=\underset{x\in \left( a,b\right) }{\sup }%
\left\vert f^{(4)}(x)\right\vert <\infty .$ Then the following inequality
holds:%
\begin{equation*}
\left\vert \frac{1}{3}\left[ \frac{f(a)+f(b)}{2}+2f\left( \frac{a+b}{2}%
\right) \right] -\frac{1}{b-a}\dint\limits_{a}^{b}f(x)dx\right\vert \leq 
\frac{1}{2880}\left\Vert f^{(4)}\right\Vert _{\infty }\left( b-a\right) ^{2}.
\end{equation*}
\ \qquad In recent years many authors have studied error estimations for
Simpson's inequality; for refinements, counterparts, generalizations and new
Simpson's type inequalities, see \cite{ADD09,SA11,SSO10,SSO10a}.

In \cite{I12}, Iscan obtained\ a new generalization of some integral
inequalities for differentiable convex mapping which are connected
Simpson's, midpoint and trapezoid inequalities, and he used the following
lemma to prove this.

\begin{lemma}
\label{2.1}Let $f:I\subseteq \mathbb{R\rightarrow R}$ be a differentiable
mapping on $I^{\circ }$ such that $f^{\prime }\in L[a,b]$, where $a,b\in I$
with $a<b$ and $\alpha ,\lambda \in \left[ 0,1\right] $. Then the following
equality holds:%
\begin{eqnarray*}
&&\lambda \left( \alpha f(a)+\left( 1-\alpha \right) f(b)\right) +\left(
1-\lambda \right) f(\alpha a+\left( 1-\alpha \right) b)-\frac{1}{b-a}%
\dint\limits_{a}^{b}f(x)dx \\
&=&\left( b-a\right) \left[ \dint\limits_{0}^{1-\alpha }\left( t-\alpha
\lambda \right) f^{\prime }\left( tb+(1-t)a\right) dt\right. \\
&&\left. +\dint\limits_{1-\alpha }^{1}\left( t-1+\lambda \left( 1-\alpha
\right) \right) f^{\prime }\left( tb+(1-t)a\right) dt\right] .
\end{eqnarray*}
\end{lemma}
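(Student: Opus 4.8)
The plan is to start from the right-hand side and reduce it to the left by integration by parts, since $f'$ appears only on the right. First I would introduce the abbreviation $g(t)=f(tb+(1-t)a)$, so that $g'(t)=(b-a)f'(tb+(1-t)a)$ and each integrand of the form $(\text{affine in }t)\cdot f'(tb+(1-t)a)$ becomes $\frac{1}{b-a}(\text{affine in }t)\cdot g'(t)$. This converts both integrals on the right into elementary integration-by-parts problems: the antiderivative factor $g(t)$ supplies boundary values of $f$, while the leftover pieces are integrals of $g$ alone. Note that only $f'\in L[a,b]$ is needed for these manipulations, so no convexity hypothesis enters and the identity will hold for all $\alpha,\lambda\in[0,1]$.

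Applying integration by parts to the first integral over $[0,1-\alpha]$ with kernel $t-\alpha\lambda$, the boundary evaluations at $t=1-\alpha$ and $t=0$ produce a multiple of $g(1-\alpha)=f(\alpha a+(1-\alpha)b)$ and of $g(0)=f(a)$, leaving the term $-\int_0^{1-\alpha}g(t)\,dt$. Doing the same on the second integral over $[1-\alpha,1]$ with kernel $t-1+\lambda(1-\alpha)$ gives a multiple of $g(1)=f(b)$ and of $f(\alpha a+(1-\alpha)b)$ at $t=1-\alpha$, together with $-\int_{1-\alpha}^{1}g(t)\,dt$. The two leftover integrals then combine into $-\int_0^1 g(t)\,dt$, and the affine substitution $x=tb+(1-t)a$, $dx=(b-a)\,dt$, turns this into $-\frac{1}{b-a}\int_a^b f(x)\,dx$, which is exactly the integral term on the left after multiplication by $(b-a)$.

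The step I expect to require the most care is the bookkeeping of the coefficient of the midpoint-type value $f(\alpha a+(1-\alpha)b)$, since this quantity is produced twice: from the upper endpoint $t=1-\alpha$ of the first integral, with kernel value $1-\alpha-\alpha\lambda$, and from the lower endpoint $t=1-\alpha$ of the second, contributing $-(-\alpha+\lambda(1-\alpha))=\alpha-\lambda(1-\alpha)$. The kernels are chosen precisely so that these add to $1-\lambda$. Simultaneously the endpoint contributions at $t=0$ and $t=1$ must collapse to $\alpha\lambda f(a)$ and $\lambda(1-\alpha)f(b)$, that is, to $\lambda\bigl(\alpha f(a)+(1-\alpha)f(b)\bigr)$. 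Once these three coefficients are checked, collecting the boundary terms together with the integral term reproduces the left-hand side verbatim, which completes the proof.
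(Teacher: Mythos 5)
Your proof is correct. The paper itself states Lemma \ref{2.1} without proof, citing \cite{I12}, and the argument intended there is precisely your computation: integrate each kernel against $f'(tb+(1-t)a)$ by parts, note that the boundary evaluations give $\alpha\lambda f(a)$ at $t=0$, $\lambda(1-\alpha)f(b)$ at $t=1$, and coefficients $(1-\alpha-\alpha\lambda)+(\alpha-\lambda(1-\alpha))=1-\lambda$ for $f(\alpha a+(1-\alpha)b)$ at the junction $t=1-\alpha$, while the leftover integrals combine and rescale via $x=tb+(1-t)a$ to $-\frac{1}{b-a}\int_a^b f(x)\,dx$; so your route matches the standard one.
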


The main inequality in \cite{I12}, pointed out, is as follows.

\begin{theorem}
Let $f:I\subseteq \mathbb{R\rightarrow R}$ be a differentiable mapping on $%
I^{\circ }$ such that $f^{\prime }\in L[a,b]$, where $a,b\in I^{\circ }$
with $a<b$ and $\alpha ,\lambda \in \left[ 0,1\right] $. If $\left\vert
f^{\prime }\right\vert ^{q}$ is convex on $[a,b]$, $q\geq 1,$ then the
following inequality holds:%
\begin{eqnarray}
&&\left\vert \lambda \left( \alpha f(a)+\left( 1-\alpha \right) f(b)\right)
+\left( 1-\lambda \right) f(\alpha a+\left( 1-\alpha \right) b)-\frac{1}{b-a}%
\dint\limits_{a}^{b}f(x)dx\right\vert  \label{1-3} \\
&\leq &\left\{ 
\begin{array}{cc}
\begin{array}{c}
\left( b-a\right) \left\{ \gamma _{2}^{1-\frac{1}{q}}\left( \mu
_{1}\left\vert f^{\prime }(b)\right\vert ^{q}+\mu _{2}\left\vert f^{\prime
}(a)\right\vert ^{q}\right) ^{\frac{1}{q}}\right. \\ 
\left. +\upsilon _{2}^{1-\frac{1}{q}}\left( \eta _{3}\left\vert f^{\prime
}(b)\right\vert ^{q}+\eta _{4}\left\vert f^{\prime }(a)\right\vert
^{q}\right) ^{\frac{1}{q}}\right\} ,%
\end{array}
& \alpha \lambda \leq 1-\alpha \leq 1-\lambda \left( 1-\alpha \right) \\ 
\begin{array}{c}
\left( b-a\right) \left\{ \gamma _{2}^{1-\frac{1}{q}}\left( \mu
_{1}\left\vert f^{\prime }(b)\right\vert ^{q}+\mu _{2}\left\vert f^{\prime
}(a)\right\vert ^{q}\right) ^{\frac{1}{q}}\right. \\ 
\left. +\upsilon _{1}^{1-\frac{1}{q}}\left( \eta _{1}\left\vert f^{\prime
}(b)\right\vert ^{q}+\eta _{2}\left\vert f^{\prime }(a)\right\vert
^{q}\right) ^{\frac{1}{q}}\right\} ,%
\end{array}
& \alpha \lambda \leq 1-\lambda \left( 1-\alpha \right) \leq 1-\alpha \\ 
\begin{array}{c}
\left( b-a\right) \left\{ \gamma _{1}^{1-\frac{1}{q}}\left( \mu
_{3}\left\vert f^{\prime }(b)\right\vert ^{q}+\mu _{4}\left\vert f^{\prime
}(a)\right\vert ^{q}\right) ^{\frac{1}{q}}\right. \\ 
\left. +\upsilon _{2}^{1-\frac{1}{q}}\left( \eta _{3}\left\vert f^{\prime
}(b)\right\vert ^{q}+\eta _{4}\left\vert f^{\prime }(a)\right\vert
^{q}\right) ^{\frac{1}{q}}\right\} ,%
\end{array}
& 1-\alpha \leq \alpha \lambda \leq 1-\lambda \left( 1-\alpha \right)%
\end{array}%
\right.  \notag
\end{eqnarray}%
where 
\begin{equation*}
\gamma _{1}=\left( 1-\alpha \right) \left[ \alpha \lambda -\frac{\left(
1-\alpha \right) }{2}\right] ,\ \gamma _{2}=\left( \alpha \lambda \right)
^{2}-\gamma _{1}\ ,
\end{equation*}%
\begin{eqnarray*}
\upsilon _{1} &=&\frac{1-\left( 1-\alpha \right) ^{2}}{2}-\alpha \left[
1-\lambda \left( 1-\alpha \right) \right] , \\
\upsilon _{2} &=&\frac{1+\left( 1-\alpha \right) ^{2}}{2}-\left( \lambda
+1\right) \left( 1-\alpha \right) \left[ 1-\lambda \left( 1-\alpha \right) %
\right] ,
\end{eqnarray*}%
\begin{eqnarray*}
\mu _{1} &=&\frac{\left( \alpha \lambda \right) ^{3}+\left( 1-\alpha \right)
^{3}}{3}-\alpha \lambda \frac{\left( 1-\alpha \right) ^{2}}{2},\  \\
\mu _{2} &=&\frac{1+\alpha ^{3}+\left( 1-\alpha \lambda \right) ^{3}}{3}-%
\frac{\left( 1-\alpha \lambda \right) }{2}\left( 1+\alpha ^{2}\right) , \\
\mu _{3} &=&\alpha \lambda \frac{\left( 1-\alpha \right) ^{2}}{2}-\frac{%
\left( 1-\alpha \right) ^{3}}{3}, \\
\mu _{4} &=&\frac{\left( \alpha \lambda -1\right) \left( 1-\alpha
^{2}\right) }{2}+\frac{1-\alpha ^{3}}{3},
\end{eqnarray*}%
\begin{eqnarray*}
\eta _{1} &=&\frac{1-\left( 1-\alpha \right) ^{3}}{3}-\frac{\left[ 1-\lambda
\left( 1-\alpha \right) \right] }{2}\alpha \left( 2-\alpha \right) ,\  \\
\eta _{2} &=&\frac{\lambda \left( 1-\alpha \right) \alpha ^{2}}{2}-\frac{%
\alpha ^{3}}{3},
\end{eqnarray*}%
\begin{eqnarray*}
\eta _{3} &=&\frac{\left[ 1-\lambda \left( 1-\alpha \right) \right] ^{3}}{3}-%
\frac{\left[ 1-\lambda \left( 1-\alpha \right) \right] }{2}\left( 1+\left(
1-\alpha \right) ^{2}\right) +\frac{1+\left( 1-\alpha \right) ^{3}}{3}, \\
\eta _{4} &=&\frac{\left[ \lambda \left( 1-\alpha \right) \right] ^{3}}{3}-%
\frac{\lambda \left( 1-\alpha \right) \alpha ^{2}}{2}+\frac{\alpha ^{3}}{3}.
\end{eqnarray*}
\end{theorem}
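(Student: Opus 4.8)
The plan is to begin from the integral identity in Lemma \ref{2.1}, take absolute values, and split the right-hand side by the triangle inequality. Writing $g(t)=f^{\prime }\left( tb+(1-t)a\right) $, this gives
\begin{eqnarray*}
&&\left\vert \lambda \left( \alpha f(a)+\left( 1-\alpha \right) f(b)\right)
+\left( 1-\lambda \right) f(\alpha a+\left( 1-\alpha \right) b)-\frac{1}{b-a}
\int\limits_{a}^{b}f(x)dx\right\vert \\
&\leq &\left( b-a\right) \left[ \int\limits_{0}^{1-\alpha }\left\vert
t-\alpha \lambda \right\vert \left\vert g(t)\right\vert dt+\int\limits_{1-
\alpha }^{1}\left\vert t-1+\lambda \left( 1-\alpha \right) \right\vert
\left\vert g(t)\right\vert dt\right] .
\end{eqnarray*}

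Next, since $q\geq 1$, I would bound each of the two integrals by the power-mean inequality (a H\"{o}lder-type estimate) in the form $\int \left\vert w\right\vert \left\vert g\right\vert \leq \left( \int \left\vert w\right\vert \right) ^{1-\frac{1}{q}}\left( \int \left\vert w\right\vert \left\vert g\right\vert ^{q}\right) ^{\frac{1}{q}}$, taking the weight $w$ to be the affine factor $t-\alpha \lambda $ on $[0,1-\alpha ]$ and $t-1+\lambda \left( 1-\alpha \right) $ on $[1-\alpha ,1]$. The convexity of $\left\vert f^{\prime }\right\vert ^{q}$ then supplies the pointwise bound $\left\vert g(t)\right\vert ^{q}\leq t\left\vert f^{\prime }(b)\right\vert ^{q}+(1-t)\left\vert f^{\prime }(a)\right\vert ^{q}$, so each factor $\int \left\vert w\right\vert \left\vert g\right\vert ^{q}$ is dominated by $\left\vert f^{\prime }(b)\right\vert ^{q}\int \left\vert w\right\vert t\,dt+\left\vert f^{\prime }(a)\right\vert ^{q}\int \left\vert w\right\vert (1-t)\,dt$. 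Thus every quantity in the statement is an explicit elementary integral: the constants $\gamma _{i},\upsilon _{i}$ are the total masses $\int \left\vert w\right\vert dt$, while $\mu _{i},\eta _{i}$ are the weighted moments $\int \left\vert w\right\vert t\,dt$ and $\int \left\vert w\right\vert (1-t)\,dt$.

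The three cases arise solely from the signs of the two affine weights on their intervals of integration. On $[0,1-\alpha ]$ the factor $t-\alpha \lambda $ changes sign exactly when $\alpha \lambda \leq 1-\alpha $; in that regime one splits the integral at $t=\alpha \lambda $ and obtains $\gamma _{2},\mu _{1},\mu _{2}$, whereas if $1-\alpha \leq \alpha \lambda $ the factor is non-positive throughout, yielding $\gamma _{1},\mu _{3},\mu _{4}$. Likewise on $[1-\alpha ,1]$ the factor $t-1+\lambda \left( 1-\alpha \right) $ vanishes at $t=1-\lambda \left( 1-\alpha \right) $, which lies inside the interval precisely when $\lambda \left( 1-\alpha \right) \leq \alpha $; this produces $\upsilon _{2},\eta _{3},\eta _{4}$, while in the complementary range $\alpha \leq \lambda \left( 1-\alpha \right) $ the factor is non-negative throughout, giving $\upsilon _{1},\eta _{1},\eta _{2}$. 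Since $\alpha \lambda \leq 1-\lambda \left( 1-\alpha \right) $ always holds, the admissible sign patterns collapse to exactly the three listed combinations, and substituting the corresponding moments completes the estimate.

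The genuinely routine part is the evaluation of $\int \left\vert t-c\right\vert dt$, $\int \left\vert t-c\right\vert t\,dt$ and $\int \left\vert t-c\right\vert (1-t)\,dt$ on the relevant subintervals; I would compute these once in closed form and then read off $\gamma _{i},\upsilon _{i},\mu _{i},\eta _{i}$ (noting the handy relations $\mu _{2}=\gamma _{2}-\mu _{1}$ and the like). The main obstacle is purely bookkeeping: keeping the case hypotheses consistent across the two integrals so that no single case demands contradictory signs, checking that the fourth sign combination is empty (it forces $\alpha =\tfrac{1}{2},\lambda =1$), and matching the resulting polynomials in $\alpha ,\lambda $ with the somewhat opaque closed forms recorded in the statement.
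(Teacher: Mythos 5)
Your proposal is correct and follows essentially the same route as the paper: the paper obtains this statement as the $h(t)=t$ case of Theorem \ref{2.2}, whose proof likewise starts from Lemma \ref{2.1}, applies the power mean inequality with the affine factors as weights, invokes convexity of $\left\vert f^{\prime }\right\vert ^{q}$ pointwise, and resolves the case distinctions by the sign of $t-\alpha \lambda $ on $[0,1-\alpha ]$ and of $t-1+\lambda \left( 1-\alpha \right) $ on $[1-\alpha ,1]$. Your identification of $\gamma _{i},\upsilon _{i}$ as total masses and $\mu _{i},\eta _{i}$ as the corresponding moments, and your observation that the fourth sign pattern degenerates to $\alpha =\tfrac{1}{2},\lambda =1$, match the paper's computation exactly.
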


In \cite{ADK11} Alomari et al. obtained the following inequalities of the
left-hand side of Hermite-Hadamard's inequality for s-convex mappings.

\begin{theorem}
Let $f:I\subseteq \lbrack 0,\infty )\rightarrow \mathbb{R}$ be a
differentiable mapping on $I^{\circ }$, such that $f^{\prime }\in L[a,b]$,
where $a,b\in I$ with $a<b$. If $|f^{\prime }|^{q},\ q\geq 1,$ is $s-$convex
on $[a,b]$, for some fixed $s\in (0,1]$, then the following inequality holds:%
\begin{eqnarray}
&&\left\vert f\left( \frac{a+b}{2}\right) -\frac{1}{b-a}\dint%
\limits_{a}^{b}f(x)dx\right\vert  \notag \\
&\leq &\frac{b-a}{8}\left( \frac{2}{(s+1)(s+2)}\right) ^{\frac{1}{q}}\left[
\left\{ \left( 2^{1-s}+1\right) \left\vert f^{\prime }(b)\right\vert
^{q}+2^{1-s}\left\vert f^{\prime }(a)\right\vert ^{q}\right\} ^{\frac{1}{q}%
}\right.  \notag \\
&&\left. +\left\{ \left( 2^{1-s}+1\right) \left\vert f^{\prime
}(a)\right\vert ^{q}+2^{1-s}\left\vert f^{\prime }(b)\right\vert
^{q}\right\} ^{\frac{1}{q}}\right] .  \label{1-4}
\end{eqnarray}
\end{theorem}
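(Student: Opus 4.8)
The plan is to reduce the midpoint estimate to the integral identity already available in the paper and then to combine the power-mean inequality with the $s$-convexity of $|f'|^q$. I would begin by specializing Lemma \ref{2.1}: taking $\lambda=0$ and $\alpha=\tfrac12$ collapses the left-hand side to $f\!\left(\tfrac{a+b}{2}\right)-\tfrac1{b-a}\int_a^b f(x)\,dx$ and yields the midpoint identity
\begin{equation*}
f\!\left(\tfrac{a+b}{2}\right)-\tfrac1{b-a}\int_a^b f(x)\,dx=(b-a)\left[\int_0^{1/2} t\,f'(tb+(1-t)a)\,dt+\int_{1/2}^1 (t-1)\,f'(tb+(1-t)a)\,dt\right].
\end{equation*}
Taking absolute values and using the triangle inequality replaces the signed kernels $t$ and $t-1$ by $t$ and $1-t$, so the task becomes estimating $\int_0^{1/2} t\,|f'(\cdot)|\,dt$ and $\int_{1/2}^1 (1-t)\,|f'(\cdot)|\,dt$.

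Next, on each of these two integrals I would apply the power-mean inequality in the form $\int w|g|\le(\int w)^{1-1/q}(\int w|g|^q)^{1/q}$ with weight $w=t$ (resp.\ $w=1-t$); here the weight integrals are $\int_0^{1/2} t\,dt=\int_{1/2}^1(1-t)\,dt=\tfrac18$, which produces the prefactor $\tfrac{b-a}{8}$ once the factor $(b-a)$ is distributed. Into the remaining $L^q$ factors I substitute the $s$-convexity bound $|f'(tb+(1-t)a)|^q\le t^s|f'(b)|^q+(1-t)^s|f'(a)|^q$. This reduces everything to the four elementary moment integrals $\int_0^{1/2} t^{\,1+s}\,dt$, $\int_0^{1/2} t(1-t)^s\,dt$, $\int_{1/2}^1(1-t)t^s\,dt$ and $\int_{1/2}^1(1-t)^{\,1+s}\,dt$; the reflection $t\mapsto 1-t$ shows these take only two distinct values, which is exactly what produces the symmetric ``swapped'' pair of brackets $(2^{1-s}+1)|f'(b)|^q+2^{1-s}|f'(a)|^q$ and its $a\leftrightarrow b$ mirror appearing in the statement.

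The main obstacle is the bookkeeping in this last step rather than any conceptual difficulty. The integral $\int_0^{1/2}t(1-t)^s\,dt$ does not collapse to a single power of $\tfrac12$; it evaluates to $\tfrac1{(s+1)(s+2)}-\tfrac1{2^{s+1}(s+1)}+\tfrac1{2^{s+2}(s+2)}$, so the clean common factor $\left(\tfrac{2}{(s+1)(s+2)}\right)^{1/q}$ advertised in the statement is reached only after these mixed terms, together with $\int_0^{1/2}t^{1+s}\,dt=\tfrac{1}{2^{s+2}(s+2)}$, are majorized by expressions sharing the denominator $(s+1)(s+2)$, which is where the hypothesis $s\le 1$ enters. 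I expect the delicate point to be arranging those majorizations so that the surviving coefficients collapse to exactly $2^{1-s}+1$ and $2^{1-s}$. As a consistency check I would verify the case $s=1$, where the moments give $\int_0^{1/2}t^2\,dt=\tfrac1{24}$ and $\int_0^{1/2}t(1-t)\,dt=\tfrac1{12}$, so the two brackets become $\tfrac13$- and $\tfrac23$-weighted combinations of $|f'(a)|^q,|f'(b)|^q$; in particular taking also $q=1$ makes the whole bound collapse to the classical midpoint estimate $\tfrac{b-a}{8}\big(|f'(a)|+|f'(b)|\big)$, confirming the normalization.
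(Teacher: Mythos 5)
Your strategy---specializing Lemma \ref{2.1} to $(\alpha,\lambda)=(\tfrac12,0)$, applying the power--mean inequality with weights $t$ and $1-t$ (each integrating to $\tfrac18$), inserting $s$-convexity, and computing the four moments---is exactly the route the paper takes: Theorem \ref{2.2} is specialized to $h(t)=t^{s}$ in Corollary \ref{2.2a}, then to $(\alpha,\lambda)=(\tfrac12,0)$ in the remark containing (\ref{2-10}), and (\ref{1-4}) is deduced from the sharper bound (\ref{2-10}) by a coefficient comparison. Your midpoint identity, the prefactor $\frac{b-a}{8}$, and the moment evaluations are all correct. The difficulty is that the one step you explicitly defer (``arranging those majorizations so that the surviving coefficients collapse to exactly $2^{1-s}+1$ and $2^{1-s}$'') is the entire content of the deduction, and in the form you describe it, it fails.

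Concretely, with the kernel $f'(tb+(1-t)a)$ of Lemma \ref{2.1}, the integral over $[0,\tfrac12]$ produces, after multiplication by $8$, the bracket
\begin{equation*}
\frac{2^{1-s}}{s+2}\,\left\vert f'(b)\right\vert ^{q}+\frac{8-2^{1-s}(s+3)}{(s+1)(s+2)}\,\left\vert f'(a)\right\vert ^{q}.
\end{equation*}
If you try to dominate this termwise by the first displayed bracket of (\ref{1-4}), namely $\frac{2}{(s+1)(s+2)}\left[ (2^{1-s}+1)\left\vert f'(b)\right\vert ^{q}+2^{1-s}\left\vert f'(a)\right\vert ^{q}\right] $, the $\left\vert f'(a)\right\vert^{q}$ coefficients would require $8-2^{1-s}(s+3)\leq 2^{2-s}$, which is false for $s$ near $1$: at $s=1$ it reads $\tfrac23\leq\tfrac13$. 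Your own $s=1$ check actually contains this warning sign: the exact brackets are the $(\tfrac13,\tfrac23)$-weighted combinations, while those of (\ref{1-4}) are the $(\tfrac23,\tfrac13)$-weighted ones, so they agree only as unordered pairs. The correct deduction must cross-pair the brackets: the $[0,\tfrac12]$ bracket is dominated by $\frac{2}{(s+1)(s+2)}\left[ (2^{1-s}+1)\left\vert f'(a)\right\vert ^{q}+2^{1-s}\left\vert f'(b)\right\vert ^{q}\right] $, and symmetrically for the $[\tfrac12,1]$ bracket; since the final bound is the sum of the two brackets, this still yields (\ref{1-4}). The two scalar inequalities this pairing needs are $s+1\leq 2$ (trivial) and $8-2^{1-s}(s+3)\leq 2^{2-s}+2$, equivalently $2^{1-s}(s+5)\geq 6$, which holds on $(0,1]$ because $s\mapsto 2^{1-s}(s+5)$ is decreasing (its derivative has the sign of $1-(s+5)\ln 2<0$) and equals $6$ at $s=1$; this is precisely the comparison $\frac{2^{s+2}-s-3}{2}\leq\frac{2^{1-s}+1}{2^{1-s}}$ invoked in the paper's remark following Corollary \ref{2.2a}. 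Until the cross-pairing is identified and this inequality is proved, your argument does not reach (\ref{1-4}).
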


\begin{theorem}
Let $f:I\subseteq \lbrack 0,\infty )\rightarrow \mathbb{R}$ be a
differentiable mapping on $I^{\circ }$, such that $f^{\prime }\in L[a,b]$,
where $a,b\in I$ with $a<b$. If $|f^{\prime }|^{\frac{p}{p-1}},\ p>1,$ is $%
s- $convex on $[a,b]$, for some fixed $s\in (0,1]$, then the following
inequality holds:%
\begin{eqnarray}
\left\vert f\left( \frac{a+b}{2}\right) -\frac{1}{b-a}\dint%
\limits_{a}^{b}f(x)dx\right\vert &\leq &\left( \frac{b-a}{4}\right) \left( 
\frac{1}{p+1}\right) ^{\frac{1}{p}}\left( \frac{1}{s+1}\right) ^{\frac{2}{q}}
\notag \\
&&\times \left[ \left( \left( 2^{1-s}+s+1\right) \left\vert f^{\prime
}\left( a\right) \right\vert ^{q}+2^{1-s}\left\vert f^{\prime }\left(
b\right) \right\vert ^{q}\right) ^{\frac{1}{q}}\right.  \notag \\
&&+\left. \left( \left( 2^{1-s}+s+1\right) \left\vert f^{\prime }\left(
b\right) \right\vert ^{q}+2^{1-s}\left\vert f^{\prime }\left( a\right)
\right\vert ^{q}\right) ^{\frac{1}{q}}\right] ,  \label{1-4a}
\end{eqnarray}%
where $p$ is the conjugate of $q$, $q=p/(p-1).$
\end{theorem}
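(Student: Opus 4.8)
The plan is to begin from the integral identity of Lemma \ref{2.1}, specialized to the midpoint case $\alpha=\tfrac12$, $\lambda=0$. With these choices the left-hand side collapses to exactly $f\!\left(\frac{a+b}{2}\right)-\frac{1}{b-a}\int_a^b f(x)\,dx$, and the lemma gives
\begin{equation*}
f\!\left(\frac{a+b}{2}\right)-\frac{1}{b-a}\int_a^b f(x)\,dx=(b-a)\left[\int_0^{1/2}t\,f'(tb+(1-t)a)\,dt+\int_{1/2}^{1}(t-1)f'(tb+(1-t)a)\,dt\right].
\end{equation*}
Taking absolute values and applying the triangle inequality replaces $(t-1)$ by $(1-t)$ on $[\tfrac12,1]$ and splits the estimate into two integrals of identical shape; the substitution $t\mapsto 1-t$ shows that the second integral is the mirror image of the first with the roles of $a$ and $b$ interchanged, which is precisely what produces the two symmetric bracketed terms in the claimed bound.

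Next I would rescale each half-integral to the unit interval. For the first, the substitution $t=u/2$ yields $\int_0^{1/2}t\,|f'(tb+(1-t)a)|\,dt=\tfrac14\int_0^1 u\,\bigl|f'(\tfrac u2 b+(1-\tfrac u2)a)\bigr|\,du$, and this is where the overall factor $\tfrac{b-a}{4}$ originates. To this I apply H\"older's inequality with exponents $p$ and $q=p/(p-1)$, separating the weight $u$ from the derivative:
\begin{equation*}
\int_0^1 u\,|f'|\,du\le\left(\int_0^1 u^{p}\,du\right)^{1/p}\left(\int_0^1|f'|^{q}\,du\right)^{1/q}=\left(\frac{1}{p+1}\right)^{1/p}\left(\int_0^1|f'|^{q}\,du\right)^{1/q},
\end{equation*}
which accounts for the factor $\left(\frac{1}{p+1}\right)^{1/p}$. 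Since $|f'|^{q}$ is $s$-convex, the integrand at the point $\tfrac u2 b+(1-\tfrac u2)a$ is bounded by $(\tfrac u2)^{s}|f'(b)|^{q}+(1-\tfrac u2)^{s}|f'(a)|^{q}$, so the remaining factor reduces to the elementary integrals $\int_0^1(\tfrac u2)^{s}\,du=\frac{2^{-s}}{s+1}$ and $\int_0^1(1-\tfrac u2)^{s}\,du=\frac{2-2^{-s}}{s+1}$.

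Finally I would substitute these constants back, combine with the mirror term, and regroup. The main obstacle is purely the constant bookkeeping: the raw coefficients $\frac{2^{-s}}{s+1}$ and $\frac{2-2^{-s}}{s+1}$ must be bounded from above and reorganized into the symmetric closed form carrying the powers $2^{1-s}$ and the factor $\left(\frac{1}{s+1}\right)^{2/q}$. This regrouping is valid only because $s\in(0,1]$, where estimates such as $\tfrac{2}{s+1}\ge 1$ are available, and the cleanest way to confirm that the coefficients have been assembled correctly is to check the boundary case $s=1$: there the inequality becomes an equality, recovering exactly the known H\"older-type midpoint estimate for convex functions, and both grouped coefficients $\tfrac{2^{1-s}+s+1}{(s+1)^2}$ and $\tfrac{2^{1-s}}{(s+1)^2}$ reduce to $\tfrac34$ and $\tfrac14$, matching the direct computation.
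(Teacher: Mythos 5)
Your route is genuinely different from the paper's. The paper proves (\ref{1-4a}) in the remark following Corollary \ref{2.4}: it takes Theorem \ref{2.3} with $h(t)=t^{s}$, $\alpha=\frac{1}{2}$, $\lambda=0$, where after the same identity and the same H\"{o}lder split the inner integrals $\int_{0}^{1/2}\left\vert f^{\prime}(tb+(1-t)a)\right\vert^{q}dt$ and $\int_{1/2}^{1}\left\vert f^{\prime}(tb+(1-t)a)\right\vert^{q}dt$ are bounded by the \emph{right-hand} Hermite--Hadamard inequality (\ref{1-2}) applied on the half-intervals $[a,\frac{a+b}{2}]$ and $[\frac{a+b}{2},b]$ (which introduces $\left\vert f^{\prime}\left(\frac{a+b}{2}\right)\right\vert^{q}$), and then the midpoint value is eliminated by the \emph{left-hand} Dragomir--Fitzpatrick inequality $2^{s-1}\left\vert f^{\prime}\left(\frac{a+b}{2}\right)\right\vert^{q}\leq\frac{\left\vert f^{\prime}(a)\right\vert^{q}+\left\vert f^{\prime}(b)\right\vert^{q}}{s+1}$; the constants $(2^{1-s}+s+1)/(s+1)^{2}$ and $2^{1-s}/(s+1)^{2}$ then appear exactly, with nothing left to compare. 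You instead apply $s$-convexity of $\left\vert f^{\prime}\right\vert^{q}$ pointwise along the segment and integrate the weights $\left(\frac{u}{2}\right)^{s}$ and $\left(1-\frac{u}{2}\right)^{s}$, which avoids both Hermite--Hadamard theorems entirely and produces the coefficients $\frac{2^{-s}}{s+1}$ and $\frac{2-2^{-s}}{s+1}$. Everything up to that point is correct (identity, symmetry, rescaling, H\"{o}lder, the constant $\frac{b-a}{4}\left(\frac{1}{p+1}\right)^{1/p}$), and your intermediate bound is in fact \emph{sharper} than (\ref{1-4a}).

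But precisely because it is sharper, the last step is a genuine gap and not bookkeeping: your bound and the stated bound are different quantities (they coincide only at $s=1$), so you must prove the coefficientwise domination, valid for all $s\in(0,1]$,
\begin{equation*}
\frac{2^{-s}}{s+1}\leq\frac{2^{1-s}}{(s+1)^{2}}\qquad\text{and}\qquad\frac{2-2^{-s}}{s+1}\leq\frac{2^{1-s}+s+1}{(s+1)^{2}}.
\end{equation*}
The first is your remark $\frac{2}{s+1}\geq1$. The second, however, is equivalent to $2^{s}(s+1)\leq s+3$, i.e.\ $2^{s}\leq\frac{s+3}{s+1}$, and this does not follow from $\frac{2}{s+1}\geq1$; it needs its own short argument, e.g.\ $\phi(s)=1+\frac{2}{s+1}-2^{s}$ satisfies $\phi^{\prime}(s)=-\frac{2}{(s+1)^{2}}-2^{s}\ln2<0$ and $\phi(1)=0$, hence $\phi\geq0$ on $(0,1]$. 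Note also that checking the endpoint $s=1$ proves nothing here: $s=1$ is exactly the case of equality, so it is the least informative point to test, and a decreasing-$\phi$ argument (or any bound strict away from $s=1$) is what is actually required. With this lemma inserted your proof is complete, and it establishes a slightly stronger inequality than (\ref{1-4a}).
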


In \cite{SSO10}, Sarikaya et al. obtained a new upper bound for the
right-hand side of Simpson's inequality for $s-$convex mapping as follows:

\begin{theorem}
Let $f:I\subseteq \lbrack 0,\infty )\rightarrow \mathbb{R}$ be a
differentiable mapping on $I^{\circ }$, such that $f^{\prime }\in L[a,b]$,
where $a,b\in I^{\circ }$ with $a<b$. If $|f^{\prime }|^{q},\ $ is $s-$%
convex on $[a,b]$, for some fixed $s\in (0,1]$ and $q>1,$ then the following
inequality holds:%
\begin{eqnarray}
&&\left\vert \frac{1}{6}\left[ f(a)+4f\left( \frac{a+b}{2}\right) +f(b)%
\right] -\frac{1}{b-a}\dint\limits_{a}^{b}f(x)dx\right\vert \leq \frac{b-a}{%
12}\left( \frac{1+2^{p+1}}{3\left( p+1\right) }\right) ^{\frac{1}{p}}
\label{1-5} \\
&&\times \left\{ \left( \frac{\left\vert f^{\prime }\left( \frac{a+b}{2}%
\right) \right\vert ^{q}+\left\vert f^{\prime }\left( a\right) \right\vert
^{q}}{s+1}\right) ^{\frac{1}{q}}+\left( \frac{\left\vert f^{\prime }\left( 
\frac{a+b}{2}\right) \right\vert ^{q}+\left\vert f^{\prime }\left( b\right)
\right\vert ^{q}}{s+1}\right) ^{\frac{1}{q}}\right\} ,  \notag
\end{eqnarray}%
where $\frac{1}{p}+\frac{1}{q}=1.$
\end{theorem}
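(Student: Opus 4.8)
The plan is to start from the Simpson integral identity, apply Hölder's inequality on each half of the unit interval, and then exploit the $s$-convexity of $|f'|^q$ after rewriting the integration variable as a convex combination involving the midpoint $m=\frac{a+b}{2}$. The key identity, obtained by integration by parts with a jump of the kernel at $t=\frac12$, is
\begin{equation*}
\frac16\left[f(a)+4f(m)+f(b)\right]-\frac{1}{b-a}\int_a^b f(x)\,dx=(b-a)\int_0^1 K(t)\,f'((1-t)a+tb)\,dt,
\end{equation*}
where $K(t)=t-\frac16$ for $t\in[0,\frac12)$ and $K(t)=t-\frac56$ for $t\in[\frac12,1]$. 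I would verify this by integrating each piece by parts: the boundary terms assemble into $\frac16 f(a)+\frac23 f(m)+\frac16 f(b)$, while the $-\int K'$ term (with $K'=1$ on each piece) reproduces $\frac{1}{b-a}\int_a^b f$. Taking absolute values and splitting at $t=\frac12$ reduces the task to estimating two integrals of $|K(t)|\,|f'((1-t)a+tb)|$.

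Next I would apply Hölder's inequality with exponents $p,q$ to each piece, separating the kernel factor from the derivative factor. The kernel norms are computed by splitting each subinterval at the zero of $K$: on $[0,\frac12]$ the zero sits at $t=\frac16$, giving
\begin{equation*}
\int_0^{1/2}\left|t-\tfrac16\right|^p dt=\frac{1}{p+1}\left(\frac{1}{6^{p+1}}+\frac{1}{3^{p+1}}\right)=\frac{1+2^{p+1}}{(p+1)6^{p+1}},
\end{equation*}
and the symmetric computation on $[\frac12,1]$ (zero at $t=\frac56$) yields the same value. This is the origin of the factor $\left(\frac{1+2^{p+1}}{3(p+1)}\right)^{1/p}$ once the constants are consolidated.

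Then I would invoke $s$-convexity. For $t\in[0,\frac12]$ one has $(1-t)a+tb=(1-2t)a+2t\,m$, a convex combination of $a$ and $m$, so $|f'((1-t)a+tb)|^q\le (2t)^s|f'(m)|^q+(1-2t)^s|f'(a)|^q$; integrating and using $\int_0^{1/2}(2t)^s\,dt=\int_0^{1/2}(1-2t)^s\,dt=\frac{1}{2(s+1)}$ gives $\int_0^{1/2}|f'|^q\,dt\le \frac{1}{2(s+1)}\left(|f'(m)|^q+|f'(a)|^q\right)$. Symmetrically, for $t\in[\frac12,1]$ we write $(1-t)a+tb=(2-2t)m+(2t-1)b$ and obtain $\int_{1/2}^1|f'|^q\,dt\le \frac{1}{2(s+1)}\left(|f'(m)|^q+|f'(b)|^q\right)$. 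Substituting these into the Hölder estimates produces the two bracketed $q$-power terms of the claim, each carrying an extra factor $\left(\frac{1}{2}\right)^{1/q}$.

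Finally I would consolidate the constants. The product of $(b-a)$, the kernel norm $\frac16\left(\frac{1+2^{p+1}}{6(p+1)}\right)^{1/p}$ (rewriting $6^{-(p+1)/p}=\frac16\,6^{-1/p}$), and the factor $2^{-1/q}$ must collapse to $\frac{b-a}{12}\left(\frac{1+2^{p+1}}{3(p+1)}\right)^{1/p}$. This is exactly where $\frac1p+\frac1q=1$ enters: $2^{-1/q}6^{-1/p}=2^{-1/q}2^{-1/p}3^{-1/p}=\frac12\,3^{-1/p}$, so the coefficient becomes $\frac16\cdot\frac12\cdot3^{-1/p}=\frac{1}{12}3^{-1/p}$, matching the target. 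I expect the main obstacle to be precisely this bookkeeping of constants together with the correct piecewise evaluation of the kernel norm; the identity and the $s$-convexity step are routine once one spots the convex-combination rewriting of the integration variable against the midpoint.
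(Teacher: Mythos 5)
Your proposal is correct and takes essentially the same approach as the paper, which obtains (\ref{1-5}) by specializing its general parametrized identity (Lemma \ref{2.1}) to $\alpha =\frac{1}{2}$, $\lambda =\frac{1}{3}$ --- precisely your Simpson kernel --- and then applying H\"{o}lder's inequality with the kernel in $L^{p}$ and $\left\vert f^{\prime }\right\vert ^{q}$ in $L^{q}$ over the two halves, exactly as you do, with the same consolidation of constants. The only cosmetic difference is that where the paper bounds $\dint\limits_{0}^{1/2}\left\vert f^{\prime }\left( tb+(1-t)a\right) \right\vert ^{q}dt$ by invoking the Hermite--Hadamard-type inequality (\ref{1-2}) for $h-$convex functions with $h(t)=t^{s}$, you re-derive that identical bound $\frac{\left\vert f^{\prime }\left( \frac{a+b}{2}\right) \right\vert ^{q}+\left\vert f^{\prime }(a)\right\vert ^{q}}{2(s+1)}$ directly from the definition of $s-$convexity via the midpoint reparametrization.
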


In \cite{KBOP07}, Kirmaci et al. proved the following trapezoid inequality:

\begin{theorem}
Let $f:I\subseteq \lbrack 0,\infty )\rightarrow \mathbb{R}$ be a
differentiable mapping on $I^{\circ }$, such that $f^{\prime }\in L[a,b]$,
where $a,b\in I^{\circ }$, $a<b$. If $|f^{\prime }|^{q},\ $ is $s-$convex on 
$[a,b]$, for some fixed $s\in (0,1)$ and $q>1,$ then%
\begin{eqnarray}
&&\left\vert \frac{f\left( a\right) +f\left( b\right) }{2}-\frac{1}{b-a}%
\dint\limits_{a}^{b}f(x)dx\right\vert \leq \frac{b-a}{2}\left( \frac{q-1}{%
2\left( 2q-1\right) }\right) ^{\frac{q-1}{q}}\left( \frac{1}{s+1}\right) ^{%
\frac{1}{q}}  \label{1-6} \\
&&\times \left\{ \left( \left\vert f^{\prime }\left( \frac{a+b}{2}\right)
\right\vert ^{q}+\left\vert f^{\prime }\left( a\right) \right\vert
^{q}\right) ^{\frac{1}{q}}+\left( \left\vert f^{\prime }\left( \frac{a+b}{2}%
\right) \right\vert ^{q}+\left\vert f^{\prime }\left( b\right) \right\vert
^{q}\right) ^{\frac{1}{q}}\right\} .  \notag
\end{eqnarray}
\end{theorem}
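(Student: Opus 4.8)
The plan is to follow the same differentiable trapezoid-type scheme used throughout this paper: start from an integral identity, pass to absolute values, and then combine H\"older's inequality with the $s$-convexity of $|f'|^q$. The first step is to record the trapezoid identity
\[
\frac{f(a)+f(b)}{2}-\frac{1}{b-a}\int_a^b f(x)\,dx=(b-a)\int_0^1\left(t-\tfrac12\right)f'\bigl(tb+(1-t)a\bigr)\,dt,
\]
which is exactly the special case $\alpha=\tfrac12$, $\lambda=1$ of Lemma \ref{2.1} (there both kernels collapse to $t-\tfrac12$ over $[0,1]$). Taking absolute values and using $\left|t-\tfrac12\right|=\tfrac12|1-2t|$ gives
\[
\left|\frac{f(a)+f(b)}{2}-\frac{1}{b-a}\int_a^b f(x)\,dx\right|\le\frac{b-a}{2}\int_0^1|1-2t|\,\bigl|f'(tb+(1-t)a)\bigr|\,dt.
\]

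Next I would split the integral at $t=\tfrac12$, where the kernel changes sign, writing the right side as $\int_0^{1/2}(1-2t)(\cdots)\,dt+\int_{1/2}^1(2t-1)(\cdots)\,dt$, and apply H\"older's inequality with conjugate exponents $p=q/(q-1)$ and $q$ on each piece. Pulling the kernel into the factor $\bigl(\int_0^{1/2}(1-2t)^p\,dt\bigr)^{1/p}$ and using the elementary value $\int_0^{1/2}(1-2t)^p\,dt=\frac{1}{2(p+1)}=\frac{q-1}{2(2q-1)}$ together with $1/p=(q-1)/q$ produces precisely the constant $\bigl(\frac{q-1}{2(2q-1)}\bigr)^{(q-1)/q}$ in the claimed bound.

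The essential remaining step is to estimate $\bigl(\int_0^{1/2}|f'(tb+(1-t)a)|^q\,dt\bigr)^{1/q}$ via $s$-convexity, and this is where the midpoint value enters. On $[0,\tfrac12]$ the argument runs from $a$ to $\frac{a+b}{2}$, so I would rewrite it as the convex combination $tb+(1-t)a=(2t)\tfrac{a+b}{2}+(1-2t)a$ with $2t\in[0,1]$; the $s$-convexity of $|f'|^q$ then yields $|f'(tb+(1-t)a)|^q\le(2t)^s\bigl|f'(\tfrac{a+b}{2})\bigr|^q+(1-2t)^s|f'(a)|^q$. Integrating $(2t)^s$ and $(1-2t)^s$ over $[0,\tfrac12]$ produces the factor $\tfrac{1}{s+1}$ (after absorbing the powers of $2$) and leaves the combination $\bigl|f'(\tfrac{a+b}{2})\bigr|^q+|f'(a)|^q$; treating $[\tfrac12,1]$ symmetrically through $tb+(1-t)a=(2-2t)\tfrac{a+b}{2}+(2t-1)b$ gives $\bigl|f'(\tfrac{a+b}{2})\bigr|^q+|f'(b)|^q$, and adding the two estimates reconstructs the bracketed sum in the statement. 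I expect the main obstacle to be the bookkeeping in this last step: one must choose the midpoint-based convex combinations so that $\bigl|f'(\tfrac{a+b}{2})\bigr|$ appears in both terms while $a$ and $b$ separate cleanly, and must track the powers of $2$ coming from the substitutions $2t$ and $2t-1$ so that they combine correctly with the H\"older constant to give the stated form.
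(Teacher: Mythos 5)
Your plan is correct and every step of it checks out: the identity is precisely the $\alpha =\tfrac{1}{2}$, $\lambda =1$ case of Lemma \ref{2.1}; H\"older's inequality with the whole kernel in the $L^{p}$ factor gives $\left( \int_{0}^{1/2}(1-2t)^{p}dt\right) ^{1/p}=\left( \tfrac{q-1}{2(2q-1)}\right) ^{(q-1)/q}$; and the midpoint-based combinations $tb+(1-t)a=(2t)\tfrac{a+b}{2}+(1-2t)a$ on $[0,\tfrac{1}{2}]$ and $tb+(1-t)a=(2-2t)\tfrac{a+b}{2}+(2t-1)b$ on $[\tfrac{1}{2},1]$ are exactly the right way to make $\left\vert f^{\prime }\left( \tfrac{a+b}{2}\right) \right\vert $ appear in both terms while $a$ and $b$ separate. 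The one place you are imprecise is the final bookkeeping, and the error is in your favor: $\int_{0}^{1/2}(2t)^{s}dt=\int_{0}^{1/2}(1-2t)^{s}dt=\tfrac{1}{2(s+1)}$, so your argument produces the constant $\left( \tfrac{1}{2(s+1)}\right) ^{1/q}$, that is, $2^{-1/q}$ times the one in the statement; there is no leftover power of $2$ that restores $\left( \tfrac{1}{s+1}\right) ^{1/q}$. This is not a gap --- the stated inequality follows at once from $\tfrac{1}{2(s+1)}\leq \tfrac{1}{s+1}$ --- but you should close with that one-line comparison rather than assert that the constants "combine to give the stated form."

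Regarding the paper: it never proves this theorem at all; the statement is quoted from \cite{KBOP07} as background. What the paper proves (Theorem \ref{2.3} and Corollary \ref{2.4} specialized to $\alpha =\tfrac{1}{2}$, $\lambda =1$, $h(t)=t^{s}$) is the trapezoid inequality \eqref{2-19}, whose constant is $\tfrac{b-a}{4}\left( \tfrac{1}{p+1}\right) ^{1/p}\left( \tfrac{1}{s+1}\right) ^{1/q}$, and it remarks that \eqref{2-19} is better than the present inequality. The paper gets there by bounding $\int_{0}^{1/2}\left\vert f^{\prime }(tb+(1-t)a)\right\vert ^{q}dt$ with the Hermite--Hadamard inequality \eqref{1-2} for $h$-convex functions on the subinterval $[a,\tfrac{a+b}{2}]$, whereas you bound the integrand pointwise by $s$-convexity against the midpoint; the two devices give the same value here. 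In fact your computation, carried out exactly, recovers the paper's sharper constant, since
\begin{equation*}
\frac{b-a}{2}\left( \frac{1}{2(p+1)}\right) ^{\frac{1}{p}}\left( \frac{1}{2(s+1)}\right) ^{\frac{1}{q}}=\frac{b-a}{4}\left( \frac{1}{p+1}\right) ^{\frac{1}{p}}\left( \frac{1}{s+1}\right) ^{\frac{1}{q}}
\end{equation*}
because $\tfrac{1}{p}+\tfrac{1}{q}=1$. So your blind proof is not only valid; pushed to its exact constant, it reproves the paper's improvement \eqref{2-19} of the very statement you were asked to verify.
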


\section{Main results}

The following theorems give a new result of integral inequalities for $h-$%
convex functions. In the sequel of the paper $I$ and $J$ are intervals in $%
%TCIMACRO{\U{211d} }%
%BeginExpansion
\mathbb{R}
%EndExpansion
$, $(0,1)\subset J$ and $h$ and $f$ \ are real non-negative functions
defined on $J$ and $I$, respectively and $h$ $\in L[0,1],\ h\neq 0.$

\begin{theorem}
\label{2.2}Let $f:I\subseteq \mathbb{R\rightarrow R}$ be a differentiable
mapping on $I^{\circ }$ such that $f^{\prime }\in L[a,b]$, where $a,b\in
I^{\circ }$ with $a<b$ and $\alpha ,\lambda \in \left[ 0,1\right] $. If $%
\left\vert f^{\prime }\right\vert ^{q}$ is $h-$convex on $[a,b]$, $q\geq 1,$
then the following inequality holds:%
\begin{eqnarray}
&&\left\vert \lambda \left( \alpha f(a)+\left( 1-\alpha \right) f(b)\right)
+\left( 1-\lambda \right) f(\alpha a+\left( 1-\alpha \right) b)-\frac{1}{b-a}%
\dint\limits_{a}^{b}f(x)dx\right\vert  \label{2-2} \\
&\leq &\left\{ 
\begin{array}{cc}
\left( b-a\right) \left[ \gamma _{2}^{1-\frac{1}{q}}A^{\frac{1}{q}}+\upsilon
_{2}^{1-\frac{1}{q}}B^{\frac{1}{q}}\right] & \alpha \lambda \leq 1-\alpha
\leq 1-\lambda \left( 1-\alpha \right) \\ 
\left( b-a\right) \left[ \gamma _{2}^{1-\frac{1}{q}}A^{\frac{1}{q}}+\upsilon
_{1}^{1-\frac{1}{q}}B^{\frac{1}{q}}\right] & \alpha \lambda \leq 1-\lambda
\left( 1-\alpha \right) \leq 1-\alpha \\ 
\left( b-a\right) \left[ \gamma _{1}^{1-\frac{1}{q}}A^{\frac{1}{q}}+\upsilon
_{2}^{1-\frac{1}{q}}B^{\frac{1}{q}}\right] & 1-\alpha \leq \alpha \lambda
\leq 1-\lambda \left( 1-\alpha \right)%
\end{array}%
\right.  \notag
\end{eqnarray}%
where 
\begin{equation}
\gamma _{1}=\left( 1-\alpha \right) \left[ \alpha \lambda -\frac{\left(
1-\alpha \right) }{2}\right] ,\ \gamma _{2}=\left( \alpha \lambda \right)
^{2}-\gamma _{1}\ ,  \label{2-2a}
\end{equation}%
\begin{eqnarray}
\upsilon _{1} &=&\frac{1-\left( 1-\alpha \right) ^{2}}{2}-\alpha \left[
1-\lambda \left( 1-\alpha \right) \right] ,  \label{2-2b} \\
\upsilon _{2} &=&\frac{1+\left( 1-\alpha \right) ^{2}}{2}-\left( \lambda
+1\right) \left( 1-\alpha \right) \left[ 1-\lambda \left( 1-\alpha \right) %
\right] ,  \notag
\end{eqnarray}%
\begin{eqnarray*}
A &=&\left\vert f^{\prime }(b)\right\vert ^{q}\dint\limits_{0}^{1-\alpha
}\left\vert t-\alpha \lambda \right\vert h(t)dt+\left\vert f^{\prime
}(a)\right\vert ^{q}\dint\limits_{0}^{1-\alpha }\left\vert t-\alpha \lambda
\right\vert h(1-t)dt, \\
B &=&\left\vert f^{\prime }(b)\right\vert ^{q}\dint\limits_{1-\alpha
}^{1}\left\vert t-1+\lambda \left( 1-\alpha \right) \right\vert
h(t)dt+\left\vert f^{\prime }(a)\right\vert ^{q}\dint\limits_{1-\alpha
}^{1}\left\vert t-1+\lambda \left( 1-\alpha \right) \right\vert h(1-t)dt
\end{eqnarray*}
\end{theorem}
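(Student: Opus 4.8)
The plan is to start from the identity in Lemma \ref{2.1}, which already expresses the left-hand side of \eqref{2-2} as $(b-a)$ times the sum of the two integrals $\int_{0}^{1-\alpha}(t-\alpha\lambda)f^{\prime}(tb+(1-t)a)\,dt$ and $\int_{1-\alpha}^{1}(t-1+\lambda(1-\alpha))f^{\prime}(tb+(1-t)a)\,dt$. First I would apply the triangle inequality to pass the absolute value inside, bounding the quantity by $(b-a)$ times $\int_{0}^{1-\alpha}|t-\alpha\lambda|\,|f^{\prime}(tb+(1-t)a)|\,dt+\int_{1-\alpha}^{1}|t-1+\lambda(1-\alpha)|\,|f^{\prime}(tb+(1-t)a)|\,dt$. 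To each of these two integrals I would then apply the power-mean (weighted H\"older) inequality with exponents $1-\frac{1}{q}$ and $\frac{1}{q}$, pulling the weight $|t-\alpha\lambda|$ (respectively $|t-1+\lambda(1-\alpha)|$) outside to the power $1-\frac{1}{q}$ and keeping it as a density multiplying $|f^{\prime}|^{q}$ in the remaining factor; this is valid precisely because $q\geq 1$, and when $q=1$ the step is vacuous and one passes directly to the $h$-convexity estimate below.

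The factors raised to the power $1-\frac{1}{q}$ are the elementary integrals $\int_{0}^{1-\alpha}|t-\alpha\lambda|\,dt$ and $\int_{1-\alpha}^{1}|t-1+\lambda(1-\alpha)|\,dt$, and evaluating them is where the three cases of \eqref{2-2} arise. On $[0,1-\alpha]$ the sign of $t-\alpha\lambda$ depends on whether the breakpoint $\alpha\lambda$ lies inside the interval or to its right: if $\alpha\lambda\leq 1-\alpha$ the integral equals $\gamma_{2}$ as in \eqref{2-2a}, while if $1-\alpha\leq\alpha\lambda$ the weight has constant sign and the integral equals $\gamma_{1}$. Likewise, on $[1-\alpha,1]$ the breakpoint of $t-1+\lambda(1-\alpha)$ is $1-\lambda(1-\alpha)$; when $1-\alpha\leq 1-\lambda(1-\alpha)$ the integral equals $\upsilon_{2}$ and when $1-\lambda(1-\alpha)\leq 1-\alpha$ it equals $\upsilon_{1}$, as in \eqref{2-2b}. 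Matching these two independent dichotomies against the hypotheses defining the three regimes reproduces exactly the pairings $(\gamma_{2},\upsilon_{2})$, $(\gamma_{2},\upsilon_{1})$ and $(\gamma_{1},\upsilon_{2})$ displayed in \eqref{2-2}.

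For the remaining factors I would invoke the $h$-convexity of $|f^{\prime}|^{q}$: taking $x=b$, $y=a$ and parameter $t$ in \eqref{1-a} gives $|f^{\prime}(tb+(1-t)a)|^{q}\leq h(t)|f^{\prime}(b)|^{q}+h(1-t)|f^{\prime}(a)|^{q}$. Substituting this into $\int_{0}^{1-\alpha}|t-\alpha\lambda|\,|f^{\prime}(tb+(1-t)a)|^{q}\,dt$ and into the analogous integral over $[1-\alpha,1]$, then splitting each into its $|f^{\prime}(b)|^{q}$ and $|f^{\prime}(a)|^{q}$ contributions, produces exactly the quantities $A$ and $B$ of the statement. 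Assembling, in each case, the $\gamma$- or $\upsilon$-factor to the power $1-\frac{1}{q}$ together with $A^{1/q}$ or $B^{1/q}$ yields \eqref{2-2}.

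The only genuinely delicate point is the sign bookkeeping: one must verify that the inequalities defining the three regimes in \eqref{2-2} are equivalent to the stated placements of the breakpoints $\alpha\lambda$ and $1-\lambda(1-\alpha)$ relative to $1-\alpha$, and confirm that the closed-form evaluations of the two weight integrals coincide with $\gamma_{1},\gamma_{2}$ and $\upsilon_{1},\upsilon_{2}$. Everything else --- the triangle inequality, the power-mean step, and the $h$-convexity substitution --- is routine; I expect the elementary algebra checking $\gamma_{2}=(\alpha\lambda)^{2}-\gamma_{1}$ and the corresponding relation for $\upsilon_{1},\upsilon_{2}$ to be the most computation-heavy but conceptually straightforward part.
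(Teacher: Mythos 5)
Your proposal is correct and follows essentially the same route as the paper's own proof: Lemma \ref{2.1}, the triangle inequality, the power mean inequality keeping $\left\vert t-\alpha \lambda \right\vert$ (resp. $\left\vert t-1+\lambda \left( 1-\alpha \right) \right\vert$) as a weight, the $h$-convexity bound producing $A$ and $B$, and the case-by-case evaluation of the two weight integrals as $\gamma _{1},\gamma _{2},\upsilon _{1},\upsilon _{2}$. No gaps; the sign bookkeeping you flag is exactly the content of the paper's equations (\ref{2-6})--(\ref{2-7}).
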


\begin{proof}
Suppose that $q\geq 1.$ From Lemma \ref{2.1} and using the well known power
mean inequality, we have%
\begin{eqnarray*}
&&\left\vert \lambda \left( \alpha f(a)+\left( 1-\alpha \right) f(b)\right)
+\left( 1-\lambda \right) f(\alpha a+\left( 1-\alpha \right) b)-\frac{1}{b-a}%
\dint\limits_{a}^{b}f(x)dx\right\vert \\
&\leq &\left( b-a\right) \left[ \dint\limits_{0}^{1-\alpha }\left\vert
t-\alpha \lambda \right\vert \left\vert f^{\prime }\left( tb+(1-t)a\right)
\right\vert dt+\dint\limits_{1-\alpha }^{1}\left\vert t-1+\lambda \left(
1-\alpha \right) \right\vert \left\vert f^{\prime }\left( tb+(1-t)a\right)
\right\vert dt\right] \\
&\leq &\left( b-a\right) \left\{ \left( \dint\limits_{0}^{1-\alpha
}\left\vert t-\alpha \lambda \right\vert dt\right) ^{1-\frac{1}{q}}\left(
\dint\limits_{0}^{1-\alpha }\left\vert t-\alpha \lambda \right\vert
\left\vert f^{\prime }\left( tb+(1-t)a\right) \right\vert ^{q}dt\right) ^{%
\frac{1}{q}}\right.
\end{eqnarray*}%
\begin{equation}
\left. +\left( \dint\limits_{1-\alpha }^{1}\left\vert t-1+\lambda \left(
1-\alpha \right) \right\vert dt\right) ^{1-\frac{1}{q}}\left(
\dint\limits_{1-\alpha }^{1}\left\vert t-1+\lambda \left( 1-\alpha \right)
\right\vert \left\vert f^{\prime }\left( tb+(1-t)a\right) \right\vert
^{q}dt\right) ^{\frac{1}{q}}\right\}  \label{2-3}
\end{equation}%
Consider 
\begin{equation*}
I_{1}=\dint\limits_{0}^{1-\alpha }\left\vert t-\alpha \lambda \right\vert
\left\vert f^{\prime }\left( tb+(1-t)a\right) \right\vert ^{q}dt,\ \
I_{2}=\dint\limits_{1-\alpha }^{1}\left\vert t-1+\lambda \left( 1-\alpha
\right) \right\vert \left\vert f^{\prime }\left( tb+(1-t)a\right)
\right\vert ^{q}dt
\end{equation*}

Since $\left\vert f^{\prime }\right\vert ^{q}$ is $h-$convex on $[a,b],$%
\begin{equation}
I_{1}\leq \left\vert f^{\prime }(b)\right\vert
^{q}\dint\limits_{0}^{1-\alpha }\left\vert t-\alpha \lambda \right\vert
h(t)dt+\left\vert f^{\prime }(a)\right\vert ^{q}\dint\limits_{0}^{1-\alpha
}\left\vert t-\alpha \lambda \right\vert h(1-t)dt.  \label{2-4}
\end{equation}%
Similarly%
\begin{equation}
I_{2}\leq \left\vert f^{\prime }(b)\right\vert ^{q}\dint\limits_{1-\alpha
}^{1}\left\vert t-1+\lambda \left( 1-\alpha \right) \right\vert
h(t)dt+\left\vert f^{\prime }(a)\right\vert ^{q}\dint\limits_{1-\alpha
}^{1}\left\vert t-1+\lambda \left( 1-\alpha \right) \right\vert h(1-t)dt.
\label{2-5}
\end{equation}%
Additionally, by simple computation%
\begin{equation}
\dint\limits_{0}^{1-\alpha }\left\vert t-\alpha \lambda \right\vert
dt=\left\{ 
\begin{array}{cc}
\gamma _{2}, & \alpha \lambda \leq 1-\alpha \\ 
\gamma _{1}, & \alpha \lambda \geq 1-\alpha%
\end{array}%
\right. ,  \label{2-6}
\end{equation}%
\begin{equation*}
\gamma _{1}=\left( 1-\alpha \right) \left[ \alpha \lambda -\frac{\left(
1-\alpha \right) }{2}\right] ,\ \gamma _{2}=\left( \alpha \lambda \right)
^{2}-\gamma _{1}\ ,
\end{equation*}%
\begin{equation}
\dint\limits_{1-\alpha }^{1}\left\vert t-1+\lambda \left( 1-\alpha \right)
\right\vert dt=\left\{ 
\begin{array}{cc}
\upsilon _{1}, & 1-\lambda \left( 1-\alpha \right) \leq 1-\alpha \\ 
\upsilon _{2}, & 1-\lambda \left( 1-\alpha \right) \geq 1-\alpha%
\end{array}%
\right. ,  \label{2-7}
\end{equation}%
\begin{eqnarray*}
\upsilon _{1} &=&\frac{1-\left( 1-\alpha \right) ^{2}}{2}-\alpha \left[
1-\lambda \left( 1-\alpha \right) \right] , \\
\upsilon _{2} &=&\frac{1+\left( 1-\alpha \right) ^{2}}{2}-\left( \lambda
+1\right) \left( 1-\alpha \right) \left[ 1-\lambda \left( 1-\alpha \right) %
\right] .
\end{eqnarray*}%
Thus, using (\ref{2-4}) (\ref{2-7}) in (\ref{2-3}), we obtain the inequality
(\ref{2-2}). This completes the proof.
\end{proof}

\begin{corollary}
Under the assumptions of Theorem \ref{2.2} with $q=1,$ we have 
\begin{eqnarray*}
&&\left\vert \lambda \left( \alpha f(a)+\left( 1-\alpha \right) f(b)\right)
+\left( 1-\lambda \right) f(\alpha a+\left( 1-\alpha \right) b)-\frac{1}{b-a}%
\dint\limits_{a}^{b}f(x)dx\right\vert \\
&\leq &\left( b-a\right) \left\{ \left\vert f^{\prime }(b)\right\vert \left[
\dint\limits_{0}^{1-\alpha }\left\vert t-\alpha \lambda \right\vert
h(t)dt+\dint\limits_{1-\alpha }^{1}\left\vert t-1+\lambda \left( 1-\alpha
\right) \right\vert h(t)dt\right] \right. \\
&&\left. \left\vert f^{\prime }(a)\right\vert \left[ \dint\limits_{0}^{1-%
\alpha }\left\vert t-\alpha \lambda \right\vert
h(1-t)dt+\dint\limits_{1-\alpha }^{1}\left\vert t-1+\lambda \left( 1-\alpha
\right) \right\vert h(1-t)dt\right] \right\} .
\end{eqnarray*}
\end{corollary}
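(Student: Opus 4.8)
The plan is to obtain the corollary as the $q=1$ specialization of Theorem \ref{2.2}, but it is cleaner to return to the exact identity of Lemma \ref{2.1} and argue directly, since for $q=1$ the power-mean step is vacuous. First I would take the equality of Lemma \ref{2.1} and pass to absolute values, applying the triangle inequality for integrals to each of the two terms on the right. This replaces the integrands $(t-\alpha\lambda)f'(tb+(1-t)a)$ and $(t-1+\lambda(1-\alpha))f'(tb+(1-t)a)$ by $|t-\alpha\lambda|\,|f'(tb+(1-t)a)|$ and $|t-1+\lambda(1-\alpha)|\,|f'(tb+(1-t)a)|$ respectively, which is exactly the first displayed bound appearing in the proof of Theorem \ref{2.2}.

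Next I would invoke the hypothesis that $|f'|^q=|f'|$ is $h$-convex. Writing the argument as the convex combination $tb+(1-t)a$, the defining inequality \eqref{1-a} gives $|f'(tb+(1-t)a)|\le h(t)|f'(b)|+h(1-t)|f'(a)|$ for every $t\in[0,1]$. Substituting this into both integrals and distributing over the nonnegative weights $|t-\alpha\lambda|$ on $[0,1-\alpha]$ and $|t-1+\lambda(1-\alpha)|$ on $[1-\alpha,1]$ yields four integrals, two carrying the factor $|f'(b)|$ and two the factor $|f'(a)|$.

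Finally I would collect the coefficients of $|f'(b)|$ and of $|f'(a)|$, grouping under each the integral over $[0,1-\alpha]$ with the integral over $[1-\alpha,1]$; this is precisely the right-hand side of the corollary. Equivalently, one can read the result straight off Theorem \ref{2.2}: at $q=1$ the exponents $1-\tfrac1q$ vanish, so the prefactors $\gamma_i^{1-1/q}$ and $\upsilon_i^{1-1/q}$ are all equal to $1$ and the three cases coincide, while $A^{1/q}+B^{1/q}=A+B$ rearranges into the stated form. There is essentially no genuine obstacle here; the only point requiring care is to notice that the power-mean inequality is not needed when $q=1$, so that the $h$-convexity estimate may be applied to the integrand itself rather than to its $q$-th power.
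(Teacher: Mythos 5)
Your proposal is correct and is essentially the paper's own (implicit) argument: the corollary is just Theorem \ref{2.2} specialized to $q=1$, where the exponents $1-\tfrac{1}{q}$ vanish, the three cases collapse to $\left( b-a\right) \left( A+B\right)$, and regrouping the four integrals by $\left\vert f^{\prime }(b)\right\vert$ and $\left\vert f^{\prime }(a)\right\vert$ gives the stated bound (note the paper's statement is missing a ``$+$'' between the two bracketed terms, which your derivation correctly supplies). Your direct route via Lemma \ref{2.1}, the triangle inequality, and $h$-convexity of $\left\vert f^{\prime }\right\vert$ is simply the proof of Theorem \ref{2.2} with the now-vacuous power-mean step omitted, so the two approaches coincide.
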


\begin{corollary}
\label{2.2a}Under the assumptions of Theorem \ref{2.2} with $I\subseteq %
\left[ 0,\infty \right) ,$ $h(t)=t^{s},\ s\in \left( 0,1\right] $, we have%
\begin{eqnarray}
&&\left\vert \lambda \left( \alpha f(a)+\left( 1-\alpha \right) f(b)\right)
+\left( 1-\lambda \right) f(\alpha a+\left( 1-\alpha \right) b)-\frac{1}{b-a}%
\dint\limits_{a}^{b}f(x)dx\right\vert  \label{2-8} \\
&\leq &\left\{ 
\begin{array}{cc}
\begin{array}{c}
\left( b-a\right) \left[ \gamma _{2}^{1-\frac{1}{q}}\left( \mu _{1}^{\ast
}\left\vert f^{\prime }(b)\right\vert ^{q}+\mu _{2}^{\ast }\left\vert
f^{\prime }(a)\right\vert ^{q}\right) ^{\frac{1}{q}}\right. \\ 
+\left. \upsilon _{2}^{1-\frac{1}{q}}\left( \eta _{3}^{\ast }\left\vert
f^{\prime }(b)\right\vert ^{q}+\eta _{4}^{\ast }\left\vert f^{\prime
}(a)\right\vert ^{q}\right) ^{\frac{1}{q}}\right]%
\end{array}%
, & \alpha \lambda \leq 1-\alpha \leq 1-\lambda \left( 1-\alpha \right) \\ 
\begin{array}{c}
\left( b-a\right) \left[ \gamma _{2}^{1-\frac{1}{q}}\left( \mu _{1}^{\ast
}\left\vert f^{\prime }(b)\right\vert ^{q}+\mu _{2}^{\ast }\left\vert
f^{\prime }(a)\right\vert ^{q}\right) ^{\frac{1}{q}}\right. \\ 
+\left. \upsilon _{2}^{1-\frac{1}{q}}\left( \eta _{1}^{\ast }\left\vert
f^{\prime }(b)\right\vert ^{q}+\eta _{2}^{\ast }\left\vert f^{\prime
}(a)\right\vert ^{q}\right) ^{\frac{1}{q}}\right]%
\end{array}%
, & \alpha \lambda \leq 1-\lambda \left( 1-\alpha \right) \leq 1-\alpha \\ 
\begin{array}{c}
\left( b-a\right) \left[ \gamma _{2}^{1-\frac{1}{q}}\left( \mu _{3}^{\ast
}\left\vert f^{\prime }(b)\right\vert ^{q}+\mu _{4}^{\ast }\left\vert
f^{\prime }(a)\right\vert ^{q}\right) ^{\frac{1}{q}}\right. \\ 
+\left. \upsilon _{2}^{1-\frac{1}{q}}\left( \eta _{3}^{\ast }\left\vert
f^{\prime }(b)\right\vert ^{q}+\eta _{4}^{\ast }\left\vert f^{\prime
}(a)\right\vert ^{q}\right) ^{\frac{1}{q}}\right]%
\end{array}%
, & 1-\alpha \leq \alpha \lambda \leq 1-\lambda \left( 1-\alpha \right)%
\end{array}%
\right. ,  \notag
\end{eqnarray}%
where $\gamma _{1},\ \gamma _{2},\ \nu _{1}$ and $\nu _{2}\ $ are defined as
in (\ref{2-2a})-(\ref{2-2b}) and 
\begin{eqnarray*}
\mu _{1}^{\ast } &=&\left( \alpha \lambda \right) ^{s+2}\frac{2}{\left(
s+1\right) \left( s+2\right) }-\left( \alpha \lambda \right) \frac{\left(
1-\alpha \right) ^{s+1}}{s+1}+\frac{\left( 1-\alpha \right) ^{s+2}}{s+2}, \\
\mu _{2}^{\ast } &=&\left( 1-\alpha \lambda \right) ^{s+2}\frac{2}{\left(
s+1\right) \left( s+2\right) }-\frac{\left( 1-\alpha \lambda \right) \left(
1+\alpha ^{s+1}\right) }{s+1}+\frac{1+\alpha ^{s+2}}{s+2}, \\
\mu _{3}^{\ast } &=&\left( \alpha \lambda \right) \frac{\left( 1-\alpha
\right) ^{s+1}}{s+1}-\frac{\left( 1-\alpha \right) ^{s+2}}{s+2}, \\
\mu _{4}^{\ast } &=&\frac{\left( \alpha \lambda -1\right) \left( 1-\alpha
^{s+1}\right) }{s+1}+\frac{1-\alpha ^{s+2}}{s+2},
\end{eqnarray*}%
\begin{eqnarray*}
\eta _{1}^{\ast } &=&\frac{1-\left( 1-\alpha \right) ^{s+2}}{s+2}-\frac{%
\left[ 1-\lambda \left( 1-\alpha \right) \right] }{s+1}\left[ 1-\left(
1-\alpha \right) ^{s+1}\right] , \\
\eta _{2}^{\ast } &=&\frac{\lambda \left( 1-\alpha \right) \alpha ^{s+1}}{s+1%
}-\frac{\alpha ^{s+2}}{s+2}, \\
\eta _{3}^{\ast } &=&\frac{2\left[ 1-\lambda \left( 1-\alpha \right) \right]
^{s+2}}{\left( s+1\right) \left( s+2\right) }-\frac{\left[ 1+\left( 1-\alpha
\right) ^{s+1}\right] \left[ 1-\lambda \left( 1-\alpha \right) \right] }{s+1}%
+\frac{1+\left( 1-\alpha \right) ^{s+2}}{s+2}, \\
\eta _{4}^{\ast } &=&\left[ \lambda \left( 1-\alpha \right) \right] ^{s+2}%
\frac{2}{\left( s+1\right) \left( s+2\right) }-\lambda \left( 1-\alpha
\right) \frac{\alpha ^{s+1}}{s+1}+\frac{\alpha ^{s+2}}{s+2}.
\end{eqnarray*}
\end{corollary}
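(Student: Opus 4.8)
The plan is to specialize Theorem \ref{2.2} to the weight $h(t)=t^{s}$. The outer prefactors $\gamma _{i}^{1-\frac{1}{q}}$ and $\upsilon _{i}^{1-\frac{1}{q}}$ in (\ref{2-2}) come from the integrals $\int_{0}^{1-\alpha }\left\vert t-\alpha \lambda \right\vert dt$ and $\int_{1-\alpha }^{1}\left\vert t-1+\lambda \left( 1-\alpha \right) \right\vert dt$ already evaluated in (\ref{2-6})--(\ref{2-7}); these do not involve $h$, so they are inherited verbatim. Hence the only quantities that must be recomputed are $A$ and $B$, and substituting $h(t)=t^{s}$ into their definitions reduces the whole corollary to evaluating the four integrals
\[
\int_{0}^{1-\alpha }\left\vert t-\alpha \lambda \right\vert t^{s}\,dt,\qquad \int_{0}^{1-\alpha }\left\vert t-\alpha \lambda \right\vert (1-t)^{s}\,dt,
\]
\[
\int_{1-\alpha }^{1}\left\vert t-1+\lambda \left( 1-\alpha \right) \right\vert t^{s}\,dt,\qquad \int_{1-\alpha }^{1}\left\vert t-1+\lambda \left( 1-\alpha \right) \right\vert (1-t)^{s}\,dt.
\]
The resulting closed forms are precisely the coefficients in (\ref{2-8}): $\mu _{1}^{\ast },\mu _{2}^{\ast }$ and $\eta _{3}^{\ast },\eta _{4}^{\ast }$ in the regimes where the absolute value is split, and $\mu _{3}^{\ast },\mu _{4}^{\ast }$ and $\eta _{1}^{\ast },\eta _{2}^{\ast }$ where it is not.

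First I would locate the zero of each linear factor. The factor $t-\alpha \lambda $ vanishes at $t=\alpha \lambda $, which lies in $[0,1-\alpha ]$ exactly when $\alpha \lambda \leq 1-\alpha $; the factor $t-1+\lambda \left( 1-\alpha \right) $ vanishes at $t=1-\lambda \left( 1-\alpha \right) $, which lies in $[1-\alpha ,1]$ exactly when $\lambda \left( 1-\alpha \right) \leq \alpha $, i.e. $1-\alpha \leq 1-\lambda \left( 1-\alpha \right) $. These are precisely the inequalities separating the three cases, so in each case I would split the integral at the interior zero, or, when the zero falls outside the interval, drop the absolute value with a fixed sign. For the integrals against $(1-t)^{s}$ the substitution $u=1-t$ turns them into integrals of $u^{s}$ and $u^{s+1}$ over shifted limits, after which every piece is an elementary power-rule evaluation.

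To illustrate, when $\alpha \lambda \leq 1-\alpha $ the first integral splits as
\[
\int_{0}^{1-\alpha }\left\vert t-\alpha \lambda \right\vert t^{s}\,dt=\int_{0}^{\alpha \lambda }(\alpha \lambda -t)t^{s}\,dt+\int_{\alpha \lambda }^{1-\alpha }(t-\alpha \lambda )t^{s}\,dt,
\]
and integrating each monomial and collecting terms yields $(\alpha \lambda )^{s+2}\frac{2}{(s+1)(s+2)}-\alpha \lambda \frac{(1-\alpha )^{s+1}}{s+1}+\frac{(1-\alpha )^{s+2}}{s+2}$, which is exactly $\mu _{1}^{\ast }$. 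The remaining seven coefficients follow by the identical mechanism, the sign of the linear factor being fixed by the case hypothesis. Substituting these closed forms back into $A$ and $B$ and inserting them into (\ref{2-2}) produces (\ref{2-8}).

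The only genuine difficulty is bookkeeping: there are four integrals, each possibly split into two pieces, across three overlapping regimes, and the resulting polynomials in $\alpha $, $\lambda $, and $s$ must be simplified to match the stated $\mu _{i}^{\ast }$ and $\eta _{i}^{\ast }$. No estimate beyond the power-mean inequality already invoked in Theorem \ref{2.2} is required; the entire content lies in correctly tracking each sign and carrying out the elementary integrations.
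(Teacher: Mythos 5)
Your proposal is correct and is exactly the argument the paper intends: the corollary is a direct specialization of Theorem \ref{2.2} with $h(t)=t^{s}$, where the only work is evaluating the four integrals defining $A$ and $B$ by splitting at the zeros $t=\alpha \lambda$ and $t=1-\lambda(1-\alpha)$, and your sample computation of $\mu _{1}^{\ast }$ checks out. One remark: inheriting the prefactors verbatim from Theorem \ref{2.2}, as you do, gives $\upsilon _{1}^{1-\frac{1}{q}}$ in the second case and $\gamma _{1}^{1-\frac{1}{q}}$ in the third, so the $\upsilon _{2}$ and $\gamma _{2}$ printed there in (\ref{2-8}) appear to be typographical slips in the paper --- your version is the consistent one.
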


\begin{corollary}
Let the assumptions of Theorem \ref{2.2} hold. Then for $h(t)=t$ the
inequality (\ref{2-2}) reduced to the inequality (\ref{1-3}).
\end{corollary}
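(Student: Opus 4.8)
The plan is to specialize the general bound (\ref{2-2}) to $h(t)=t$ and show it collapses termwise into (\ref{1-3}). The first observation is that with $h(t)=t$ the defining relation (\ref{1-a}) becomes the ordinary convexity estimate $|f'(tb+(1-t)a)|^{q}\le t|f'(b)|^{q}+(1-t)|f'(a)|^{q}$, so the hypotheses of Theorem \ref{2.2} reduce exactly to those producing (\ref{1-3}). Moreover, by (\ref{2-2a})--(\ref{2-2b}) the constants $\gamma_{1},\gamma_{2},\upsilon_{1},\upsilon_{2}$ occurring in (\ref{2-2}) are literally the same symbols as in (\ref{1-3}); since the power-mean factors are $\gamma_{i}^{1-1/q}$ and $\upsilon_{i}^{1-1/q}$ in both, nothing needs to be checked there. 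Hence the whole task is to verify that, for $h(t)=t$, the quantities $A$ and $B$ become precisely the combinations $\mu_{i}|f'(b)|^{q}+\mu_{j}|f'(a)|^{q}$ and $\eta_{i}|f'(b)|^{q}+\eta_{j}|f'(a)|^{q}$ dictated by the three cases.

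Next I would evaluate $A$. Because $h(t)=t$ forces $h(1-t)=1-t$, the coefficient of $|f'(b)|^{q}$ in $A$ is $\int_{0}^{1-\alpha}|t-\alpha\lambda|\,t\,dt$ and that of $|f'(a)|^{q}$ is $\int_{0}^{1-\alpha}|t-\alpha\lambda|(1-t)\,dt$; both are integrals of quadratics once the absolute value is resolved. In the regime $\alpha\lambda\le 1-\alpha$ (cases 1 and 2) the breakpoint $t=\alpha\lambda$ is interior to $[0,1-\alpha]$, exactly as recorded by (\ref{2-6}); splitting the interval at $\alpha\lambda$ and integrating gives the first integral equal to $\mu_{1}$ and the second equal to $\mu_{2}$. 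In the regime $\alpha\lambda\ge 1-\alpha$ (case 3) one has $|t-\alpha\lambda|=\alpha\lambda-t$ on all of $[0,1-\alpha]$, no split is needed, and the same two integrals evaluate to $\mu_{3}$ and $\mu_{4}$.

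I would then treat $B$ in the same way, writing the breakpoint as $t^{\ast}=1-\lambda(1-\alpha)$. Here the coefficient integrals are $\int_{1-\alpha}^{1}|t-t^{\ast}|\,t\,dt$ and $\int_{1-\alpha}^{1}|t-t^{\ast}|(1-t)\,dt$, and the case hypotheses (via (\ref{2-7})) control the position of $t^{\ast}$ relative to $[1-\alpha,1]$. In cases 1 and 3 one has $1-\alpha\le 1-\lambda(1-\alpha)$, so $t^{\ast}$ is interior; splitting at $t^{\ast}$ yields $\eta_{3}$ and $\eta_{4}$. In case 2 one has $1-\lambda(1-\alpha)\le 1-\alpha$, so $|t-t^{\ast}|=t-t^{\ast}$ throughout and the integrals give $\eta_{1}$ and $\eta_{2}$. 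Substituting these identifications back into (\ref{2-2}), together with the already-matching factors $\gamma_{i}^{1-1/q}$ and $\upsilon_{i}^{1-1/q}$, reproduces (\ref{1-3}) line by line and completes the argument.

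I expect no conceptual obstacle: the whole content is elementary polynomial integration plus the bookkeeping of which subinterval each breakpoint occupies under each hypothesis, and that bookkeeping is precisely what (\ref{2-6}) and (\ref{2-7}) already encode for the linear-weight integrals. The only genuine difficulty is the computational one of matching two algebraically different-looking parametrizations; for instance one must confirm the identity $\int_{0}^{1-\alpha}|t-\alpha\lambda|(1-t)\,dt=\mu_{2}=\frac{1+\alpha^{3}+(1-\alpha\lambda)^{3}}{3}-\frac{1-\alpha\lambda}{2}(1+\alpha^{2})$, which is most safely verified by direct expansion rather than by inspection.
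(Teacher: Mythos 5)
Your proposal is correct and follows exactly the argument the paper intends (the corollary is stated without a written proof precisely because it is this direct substitution): put $h(t)=t$, so $h(1-t)=1-t$, and evaluate the four weighted integrals in $A$ and $B$ by splitting at the breakpoints $\alpha\lambda$ and $1-\lambda\left( 1-\alpha\right)$ according to the three case hypotheses, which identifies them with the $\mu_{i}$ and $\eta_{i}$ of (\ref{1-3}) while the factors $\gamma_{i}^{1-\frac{1}{q}}$, $\upsilon_{i}^{1-\frac{1}{q}}$ already coincide. Your case bookkeeping (cases 1--2 give $\mu_{1},\mu_{2}$, case 3 gives $\mu_{3},\mu_{4}$; cases 1 and 3 give $\eta_{3},\eta_{4}$, case 2 gives $\eta_{1},\eta_{2}$) matches (\ref{1-3}), and the sample identity you flag for $\mu_{2}$ does check out by direct expansion.
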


\begin{corollary}
Under the assumptions of Theorem \ref{2.2} with $h(t)=1$, we have%
\begin{eqnarray*}
&&\left\vert \lambda \left( \alpha f(a)+\left( 1-\alpha \right) f(b)\right)
+\left( 1-\lambda \right) f(\alpha a+\left( 1-\alpha \right) b)-\frac{1}{b-a}%
\dint\limits_{a}^{b}f(x)dx\right\vert \\
&\leq &\left( b-a\right) \left( \left\vert f^{\prime }(b)\right\vert
^{q}+\left\vert f^{\prime }(a)\right\vert ^{q}\right) ^{\frac{1}{q}}\times
\left\{ 
\begin{array}{cc}
\gamma _{2}+\nu _{2} & \alpha \lambda \leq 1-\alpha \leq 1-\lambda \left(
1-\alpha \right) \\ 
\gamma _{2}+\nu _{1} & \alpha \lambda \leq 1-\lambda \left( 1-\alpha \right)
\leq 1-\alpha \\ 
\gamma _{1}+\nu _{2} & 1-\alpha \leq \alpha \lambda \leq 1-\lambda \left(
1-\alpha \right)%
\end{array}%
\right. ,
\end{eqnarray*}%
where $\gamma _{1},\ \gamma _{2},\ \nu _{1}$ and $\nu _{2}\ $ are defined as
in (\ref{2-2a})-(\ref{2-2b}).
\end{corollary}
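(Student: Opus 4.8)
The plan is to obtain this statement as a direct specialization of Theorem~\ref{2.2} to the weight $h\equiv 1$. For $h(t)=1$ the defining inequality (\ref{1-a}) reads $|f'(\alpha x+(1-\alpha)y)|^q\le|f'(x)|^q+|f'(y)|^q$, which is exactly the assertion that $|f'|^q$ is a $P$-function on $[a,b]$; moreover $h\equiv1$ is a nonnegative, nonzero element of $L[0,1]$. Hence all hypotheses of Theorem~\ref{2.2} are satisfied and inequality (\ref{2-2}) holds verbatim. The entire task therefore reduces to evaluating the quantities $A$ and $B$ under the substitution $h(t)=h(1-t)=1$.

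First I would insert $h\equiv1$ into the definitions of $A$ and $B$. Writing $S:=|f'(b)|^q+|f'(a)|^q$, both $h$-factors in each expression become identically $1$, so the two integrals collapse to
\[
A=S\dint_0^{1-\alpha}\left\vert t-\alpha\lambda\right\vert dt,\qquad
B=S\dint_{1-\alpha}^{1}\left\vert t-1+\lambda(1-\alpha)\right\vert dt.
\]
The integrals on the right are precisely those already evaluated in (\ref{2-6}) and (\ref{2-7}). Thus $A=S\gamma_2$ when $\alpha\lambda\le1-\alpha$ and $A=S\gamma_1$ when $\alpha\lambda\ge1-\alpha$, while $B=S\upsilon_2$ when $1-\lambda(1-\alpha)\ge1-\alpha$ and $B=S\upsilon_1$ when $1-\lambda(1-\alpha)\le1-\alpha$, with $\gamma_1,\gamma_2,\upsilon_1,\upsilon_2$ as in (\ref{2-2a})--(\ref{2-2b}).

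The key simplification is that each product in the bracket of (\ref{2-2}) collapses: since the $\gamma_j,\upsilon_k$ are integrals of nonnegative functions and hence nonnegative, one has $\gamma_j^{1-\frac1q}A^{\frac1q}=\gamma_j^{1-\frac1q}(S\gamma_j)^{\frac1q}=\gamma_j\,S^{\frac1q}$ and likewise $\upsilon_k^{1-\frac1q}B^{\frac1q}=\upsilon_k\,S^{\frac1q}$, so the whole bracket becomes $(\gamma_j+\upsilon_k)S^{\frac1q}$ and the claimed common factor $(|f'(b)|^q+|f'(a)|^q)^{1/q}$ emerges. It then remains only to match the index pair $(j,k)$ to each case-hypothesis. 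In the first two cases $\alpha\lambda\le1-\alpha$ forces the $\gamma_2$ branch, and the sign of $1-\lambda(1-\alpha)-(1-\alpha)$ selects $\upsilon_2$ or $\upsilon_1$; in the third case $\alpha\lambda\ge1-\alpha$ gives the $\gamma_1$ branch, and the chain $1-\alpha\le\alpha\lambda\le1-\lambda(1-\alpha)$ yields $1-\lambda(1-\alpha)\ge1-\alpha$ and hence the $\upsilon_2$ branch. This reproduces the coefficients $\gamma_2+\upsilon_2$, $\gamma_2+\upsilon_1$, and $\gamma_1+\upsilon_2$ of the three stated cases. The only step needing a moment's care is this last deduction in the third case; everything else is mechanical substitution.
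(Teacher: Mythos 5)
Your proposal is correct and is exactly the argument the paper intends: the corollary follows by substituting $h\equiv 1$ into Theorem \ref{2.2}, noting that $A$ and $B$ then factor as $\left(\left\vert f^{\prime }(b)\right\vert ^{q}+\left\vert f^{\prime }(a)\right\vert ^{q}\right)$ times the integrals already computed in (\ref{2-6}) and (\ref{2-7}), so that each term $\gamma_j^{1-\frac{1}{q}}A^{\frac{1}{q}}$, $\upsilon_k^{1-\frac{1}{q}}B^{\frac{1}{q}}$ collapses to $\gamma_j$ or $\upsilon_k$ times the common factor. Your case-matching (including the transitive deductions selecting $\gamma_2$ in the first two cases and $\upsilon_2$ in the third) is also the one implicit in the paper, so there is nothing to add.
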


\begin{remark}
In Corollary \ref{2.2a} , if we take $\alpha =\frac{1}{2}$ and $\lambda =%
\frac{1}{3},$ then we have the following Simpson type inequality%
\begin{equation}
\left\vert \frac{1}{6}\left[ f(a)+4f\left( \frac{a+b}{2}\right) +f(b)\right]
-\frac{1}{b-a}\dint\limits_{a}^{b}f(x)dx\right\vert \leq \frac{b-a}{2}\left( 
\frac{5}{36}\right) ^{1-\frac{1}{q}}  \label{2-9}
\end{equation}%
\begin{eqnarray}
&&\times \left\{ \left( \frac{(2s+1)3^{s+1}+2}{3\times 6^{s+1}(s+1)(s+2)}%
\left\vert f^{\prime }(b)\right\vert ^{q}+\frac{2\times
5^{s+2}+(s-4)6^{s+1}-(2s+7)3^{s+1}}{3\times 6^{s+1}(s+1)(s+2)}\left\vert
f^{\prime }(a)\right\vert ^{q}\right) ^{\frac{1}{q}}\right.  \notag \\
&&\left. +\left( \frac{2\times 5^{s+2}+(s-4)6^{s+1}-(2s+7)3^{s+1}}{%
3.6^{s+1}(s+1)(s+2)}\left\vert f^{\prime }(b)\right\vert ^{q}+\frac{%
(2s+1)3^{s+1}+2}{3\times 6^{s+1}(s+1)(s+2)}\left\vert f^{\prime
}(a)\right\vert ^{q}\right) ^{\frac{1}{q}}\right\} ,  \notag
\end{eqnarray}%
which is the same of the inequality in \cite[Theorem 10]{SSO10} .
\end{remark}

\begin{remark}
In Corollary \ref{2.2a} , if we take $\alpha =\frac{1}{2}$ and $\lambda =0,$
then we have following midpoint inequality%
\begin{eqnarray}
&&\left\vert f\left( \frac{a+b}{2}\right) -\frac{1}{b-a}\dint%
\limits_{a}^{b}f(x)dx\right\vert \leq \frac{b-a}{8}\left( \frac{2}{(s+1)(s+2)%
}\right) ^{\frac{1}{q}}  \label{2-10} \\
&&\times \left\{ \left( \frac{2^{1-s}\left( s+1\right) \left\vert f^{\prime
}(b)\right\vert ^{q}}{2}+\frac{2^{1-s}\left( 2^{s+2}-s-3\right) \left\vert
f^{\prime }(a)\right\vert ^{q}}{2}\right) ^{\frac{1}{q}}\right.  \notag \\
&&\left. +\left( \frac{2^{1-s}\left( s+1\right) \left\vert f^{\prime
}(a)\right\vert ^{q}}{2}+\frac{2^{1-s}\left( 2^{s+2}-s-3\right) \left\vert
f^{\prime }(b)\right\vert ^{q}}{2}\right) ^{\frac{1}{q}}\right\} .  \notag
\end{eqnarray}%
We note that the obtained midpoint inequality (\ref{2-10}) is better than
the inequality (\ref{1-4}). Because $\frac{s+1}{2}\leq 1$ and $\frac{%
2^{s+2}-s-3}{2}\leq \frac{2^{1-s}+1}{2^{1-s}}.$
\end{remark}

\begin{remark}
In Corollary \ref{2.2a} , if we take $\alpha =\frac{1}{2}$ , and $\lambda
=1, $ then we get the following trapezoid inequality%
\begin{eqnarray*}
&&\left\vert \frac{f\left( a\right) +f\left( b\right) }{2}-\frac{1}{b-a}%
\dint\limits_{a}^{b}f(x)dx\right\vert \leq \frac{b-a}{8}\left( \frac{2^{1-s}%
}{(s+1)(s+2)}\right) ^{\frac{1}{q}} \\
&&\times \left\{ \left( \left\vert f^{\prime }(b)\right\vert ^{q}+\left\vert
f^{\prime }(a)\right\vert ^{q}\left( 2^{s+1}+1\right) \right) ^{\frac{1}{q}%
}+\left( \left\vert f^{\prime }(a)\right\vert ^{q}+\left\vert f^{\prime
}(b)\right\vert ^{q}\left( 2^{s+1}+1\right) \right) ^{\frac{1}{q}}\right\}
\end{eqnarray*}
\end{remark}

Using Lemma \ref{2.1} we shall give another result for convex functions as
follows.

\begin{theorem}
\label{2.3}Let $f:I\subseteq \mathbb{R\rightarrow R}$ be a differentiable
mapping on $I^{\circ }$ such that $f^{\prime }\in L[a,b]$, where $a,b\in
I^{\circ }$ with $a<b$ and $\alpha ,\lambda \in \left[ 0,1\right] $. If $%
\left\vert f^{\prime }\right\vert ^{q}$ is $h-$convex on $[a,b]$, $q>1,$
then the following inequality holds:%
\begin{equation}
\left\vert \lambda \left( \alpha f(a)+\left( 1-\alpha \right) f(b)\right)
+\left( 1-\lambda \right) f(\alpha a+\left( 1-\alpha \right) b)-\frac{1}{b-a}%
\dint\limits_{a}^{b}f(x)dx\right\vert \leq \left( b-a\right)  \label{2-12}
\end{equation}%
\begin{equation*}
\times \left( \frac{1}{p+1}\right) ^{\frac{1}{p}}\left(
\dint\limits_{0}^{1}h(t)dt\right) ^{\frac{1}{q}}.\left\{ 
\begin{array}{cc}
\left[ \varepsilon _{1}^{\frac{1}{p}}C^{\frac{1}{q}}+\varepsilon _{3}^{\frac{%
1}{p}}D^{\frac{1}{q}}\right] , & \alpha \lambda \leq 1-\alpha \leq 1-\lambda
\left( 1-\alpha \right) \\ 
\left[ \varepsilon _{1}^{\frac{1}{p}}C^{\frac{1}{q}}+\varepsilon _{4}^{\frac{%
1}{p}}D^{\frac{1}{q}}\right] , & \alpha \lambda \leq 1-\lambda \left(
1-\alpha \right) \leq 1-\alpha \\ 
\left[ \varepsilon _{2}^{\frac{1}{p}}C^{\frac{1}{q}}+\varepsilon _{3}^{\frac{%
1}{p}}D^{\frac{1}{q}}\right] , & 1-\alpha \leq \alpha \lambda \leq 1-\lambda
\left( 1-\alpha \right)%
\end{array}%
\right.
\end{equation*}%
where 
\begin{eqnarray}
C &=&\left( 1-\alpha \right) \left[ \left\vert f^{\prime }\left( \left(
1-\alpha \right) b+\alpha a\right) \right\vert ^{q}+\left\vert f^{\prime
}\left( a\right) \right\vert ^{q}\right] ,  \label{2-12a} \\
\ D &=&\alpha \left[ \left\vert f^{\prime }\left( \left( 1-\alpha \right)
b+\alpha a\right) \right\vert ^{q}+\left\vert f^{\prime }\left( b\right)
\right\vert ^{q}\right] ,  \notag
\end{eqnarray}%
\begin{eqnarray}
\varepsilon _{1} &=&\left( \alpha \lambda \right) ^{p+1}+\left( 1-\alpha
-\alpha \lambda \right) ^{p+1},\ \varepsilon _{2}=\left( \alpha \lambda
\right) ^{p+1}-\left( \alpha \lambda -1+\alpha \right) ^{p+1},  \notag \\
\varepsilon _{3} &=&\left[ \lambda \left( 1-\alpha \right) \right] ^{p+1}+%
\left[ \alpha -\lambda \left( 1-\alpha \right) \right] ^{p+1},\ \varepsilon
_{4}=\left[ \lambda \left( 1-\alpha \right) \right] ^{p+1}-\left[ \lambda
\left( 1-\alpha \right) -\alpha \right] ^{p+1},  \notag
\end{eqnarray}%
and $\frac{1}{p}+\frac{1}{q}=1.$
\end{theorem}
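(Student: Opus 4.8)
The plan is to mirror the proof of Theorem~\ref{2.2}, but to replace the power--mean inequality by H\"{o}lder's inequality, so that the weight $\left\vert t-\alpha \lambda \right\vert $ (resp. $\left\vert t-1+\lambda (1-\alpha )\right\vert $) and the derivative factor $\left\vert f^{\prime }(tb+(1-t)a)\right\vert $ land in separate $L^{p}$ and $L^{q}$ norms. Starting from Lemma~\ref{2.1} and taking absolute values, I obtain the bound $(b-a)\bigl[\int_{0}^{1-\alpha }\left\vert t-\alpha \lambda \right\vert \left\vert f^{\prime }(tb+(1-t)a)\right\vert dt+\int_{1-\alpha }^{1}\left\vert t-1+\lambda (1-\alpha )\right\vert \left\vert f^{\prime }(tb+(1-t)a)\right\vert dt\bigr]$. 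To each of these two integrals I apply H\"{o}lder's inequality with conjugate exponents $p,q$, producing a product of a weight factor $\bigl(\int \left\vert t-\cdots \right\vert ^{p}dt\bigr)^{1/p}$ and a derivative factor $\bigl(\int \left\vert f^{\prime }(tb+(1-t)a)\right\vert ^{q}dt\bigr)^{1/q}$.

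For the weight factors I compute $\int_{0}^{1-\alpha }\left\vert t-\alpha \lambda \right\vert ^{p}dt$ and $\int_{1-\alpha }^{1}\left\vert t-1+\lambda (1-\alpha )\right\vert ^{p}dt$ explicitly, splitting each at the point where its integrand changes sign. According as $\alpha \lambda $ lies below or above $1-\alpha $, the first integral equals $\varepsilon _{1}/(p+1)$ or $\varepsilon _{2}/(p+1)$; according as $\lambda (1-\alpha )$ lies below or above $\alpha $ (equivalently, as $1-\lambda (1-\alpha )$ lies above or below $1-\alpha $), the second equals $\varepsilon _{3}/(p+1)$ or $\varepsilon _{4}/(p+1)$. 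Matching these two dichotomies against the three hypotheses on $\alpha ,\lambda $ produces exactly the three cases of (\ref{2-12}), with the common factor $\left( 1/(p+1)\right) ^{1/p}$ pulled out front.

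The crucial step is the evaluation of the derivative factors, and here I do \emph{not} apply $h-$convexity in the naive form $\left\vert f^{\prime }(tb+(1-t)a)\right\vert ^{q}\leq h(t)\left\vert f^{\prime }(b)\right\vert ^{q}+h(1-t)\left\vert f^{\prime }(a)\right\vert ^{q}$. Instead I use affine reparametrizations that write the argument as a convex combination whose endpoints are the interior point $(1-\alpha )b+\alpha a$ together with $a$ on $[0,1-\alpha ]$, and together with $b$ on $[1-\alpha ,1]$. Explicitly, for $t\in [0,1-\alpha ]$ one has $tb+(1-t)a=\frac{t}{1-\alpha }\left[ (1-\alpha )b+\alpha a\right] +\left( 1-\frac{t}{1-\alpha }\right) a$, so the substitution $u=t/(1-\alpha )$ together with $h-$convexity gives $\int_{0}^{1-\alpha }\left\vert f^{\prime }(tb+(1-t)a)\right\vert ^{q}dt\leq (1-\alpha )\left[ \left\vert f^{\prime }((1-\alpha )b+\alpha a)\right\vert ^{q}+\left\vert f^{\prime }(a)\right\vert ^{q}\right] \int_{0}^{1}h(u)du=C\int_{0}^{1}h(t)dt$, where I have used $\int_{0}^{1}h(1-u)du=\int_{0}^{1}h(u)du$. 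The analogous substitution $v=(t-(1-\alpha ))/\alpha $ on $[1-\alpha ,1]$ yields $D\int_{0}^{1}h(t)dt$.

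Raising these to the power $1/q$ factors out $\left( \int_{0}^{1}h(t)dt\right) ^{1/q}$, and combining the weight factors with $C^{1/q}$ and $D^{1/q}$ delivers (\ref{2-12}) after multiplying by $(b-a)$. I expect the main bookkeeping obstacle to be the sign--splitting of the two weight integrals and the verification that the three case hypotheses select the correct pair $(\varepsilon _{i},\varepsilon _{j})$; the one genuinely non--mechanical idea is recognizing the affine reparametrization that forces the interior value $\left\vert f^{\prime }((1-\alpha )b+\alpha a)\right\vert $ to appear, rather than merely $\left\vert f^{\prime }(a)\right\vert $ and $\left\vert f^{\prime }(b)\right\vert $.
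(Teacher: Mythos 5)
Your proposal is correct and follows essentially the same route as the paper's own proof: Lemma \ref{2.1}, H\"{o}lder's inequality with the same splitting into the two weight/derivative factors, the same sign-splitting computation of the weight integrals producing $\varepsilon _{1},\dots ,\varepsilon _{4}$ in the three stated cases, and the same bounds $C\dint\nolimits_{0}^{1}h(t)dt$ and $D\dint\nolimits_{0}^{1}h(t)dt$ for the derivative factors. The only cosmetic difference is that your affine reparametrization re-derives, on the subintervals $\left[ a,\left( 1-\alpha \right) b+\alpha a\right] $ and $\left[ \left( 1-\alpha \right) b+\alpha a,b\right] $, precisely the right-hand side of the Hermite--Hadamard inequality (\ref{1-2}) for $h$-convex functions, which the paper instead simply cites (and, like the paper, you should note the trivial edge cases $\alpha =1$ and $\alpha =0$, where the substitution degenerates but the corresponding integral vanishes).
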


\begin{proof}
From Lemma \ref{2.1} and by H\"{o}lder's integral inequality, we have%
\begin{eqnarray*}
&&\left\vert \lambda \left( \alpha f(a)+\left( 1-\alpha \right) f(b)\right)
+\left( 1-\lambda \right) f(\alpha a+\left( 1-\alpha \right) b)-\frac{1}{b-a}%
\dint\limits_{a}^{b}f(x)dx\right\vert \\
&\leq &\left( b-a\right) \left[ \dint\limits_{0}^{1-\alpha }\left\vert
t-\alpha \lambda \right\vert \left\vert f^{\prime }\left( tb+(1-t)a\right)
\right\vert dt+\dint\limits_{1-\alpha }^{1}\left\vert t-1+\lambda \left(
1-\alpha \right) \right\vert \left\vert f^{\prime }\left( tb+(1-t)a\right)
\right\vert dt\right] \\
&\leq &\left( b-a\right) \left\{ \left( \dint\limits_{0}^{1-\alpha
}\left\vert t-\alpha \lambda \right\vert ^{p}dt\right) ^{\frac{1}{p}}\left(
\dint\limits_{0}^{1-\alpha }\left\vert f^{\prime }\left( tb+(1-t)a\right)
\right\vert ^{q}dt\right) ^{\frac{1}{q}}\right.
\end{eqnarray*}%
\begin{equation}
+\left. \left( \dint\limits_{1-\alpha }^{1}\left\vert t-1+\lambda \left(
1-\alpha \right) \right\vert ^{p}dt\right) ^{\frac{1}{p}}\left(
\dint\limits_{1-\alpha }^{1}\left\vert f^{\prime }\left( tb+(1-t)a\right)
\right\vert ^{q}dt\right) ^{\frac{1}{q}}\right\} .  \label{2-13}
\end{equation}%
Since $\left\vert f^{\prime }\right\vert ^{q}$ is $h-$convex on $[a,b],$ for 
$\alpha \in \left[ 0,1\right) $ by the inequality (\ref{1-2}), we get 
\begin{eqnarray}
\dint\limits_{0}^{1-\alpha }\left\vert f^{\prime }\left( tb+(1-t)a\right)
\right\vert ^{q}dt &=&\left( 1-\alpha \right) \left[ \frac{1}{\left(
1-\alpha \right) \left( b-a\right) }\dint\limits_{a}^{\left( 1-\alpha
\right) b+\alpha a}\left\vert f^{\prime }\left( x\right) \right\vert ^{q}dx%
\right]  \notag \\
&\leq &\left( 1-\alpha \right) \left[ \left\vert f^{\prime }\left( \left(
1-\alpha \right) b+\alpha a\right) \right\vert ^{q}+\left\vert f^{\prime
}\left( a\right) \right\vert ^{q}\right] \dint\limits_{0}^{1}h(t)dt.
\label{2-14}
\end{eqnarray}%
The inequality (\ref{2-14}) holds for $\alpha =1$ too. Similarly, for $%
\alpha \in \left( 0,1\right] $ by the inequality (\ref{1-2}), we have 
\begin{eqnarray}
\dint\limits_{1-\alpha }^{1}\left\vert f^{\prime }\left( tb+(1-t)a\right)
\right\vert ^{q}dt &=&\alpha \left[ \frac{1}{\alpha \left( b-a\right) }%
\dint\limits_{\left( 1-\alpha \right) b+\alpha a}^{b}\left\vert f^{\prime
}\left( x\right) \right\vert ^{q}dx\right]  \notag \\
&\leq &\alpha \left[ \left\vert f^{\prime }\left( \left( 1-\alpha \right)
b+\alpha a\right) \right\vert ^{q}+\left\vert f^{\prime }\left( b\right)
\right\vert ^{q}\right] \dint\limits_{0}^{1}h(t)dt.  \label{2-15}
\end{eqnarray}%
The inequality (\ref{2-15}) holds for $\alpha =0$ too. By simple computation%
\begin{equation}
\dint\limits_{0}^{1-\alpha }\left\vert t-\alpha \lambda \right\vert
^{p}dt=\left\{ 
\begin{array}{cc}
\frac{\left( \alpha \lambda \right) ^{p+1}+\left( 1-\alpha -\alpha \lambda
\right) ^{p+1}}{p+1}, & \alpha \lambda \leq 1-\alpha \\ 
\frac{\left( \alpha \lambda \right) ^{p+1}-\left( \alpha \lambda -1+\alpha
\right) ^{p+1}}{p+1}, & \alpha \lambda \geq 1-\alpha%
\end{array}%
\right. ,  \label{2-16}
\end{equation}%
and%
\begin{equation}
\dint\limits_{1-\alpha }^{1}\left\vert t-1+\lambda \left( 1-\alpha \right)
\right\vert ^{p}dt=\left\{ 
\begin{array}{cc}
\frac{\left[ \lambda \left( 1-\alpha \right) \right] ^{p+1}+\left[ \alpha
-\lambda \left( 1-\alpha \right) \right] ^{p+1}}{p+1}, & 1-\alpha \leq
1-\lambda \left( 1-\alpha \right) \\ 
\frac{\left[ \lambda \left( 1-\alpha \right) \right] ^{p+1}-\left[ \lambda
\left( 1-\alpha \right) -\alpha \right] ^{p+1}}{p+1}, & 1-\alpha \geq
1-\lambda \left( 1-\alpha \right)%
\end{array}%
\right. ,  \label{2-17}
\end{equation}%
thus, using (\ref{2-14})-(\ref{2-17}) in (\ref{2-13}), we obtain the
inequality (\ref{2-12}). This completes the proof.
\end{proof}

\begin{corollary}
\label{2.4}Under the assumptions of Theorem \ref{2.3} with $I\subseteq \left[
0,\infty \right) ,\ h(t)=t^{s},\ s\in \left( 0,1\right] $, we have%
\begin{equation}
\left\vert \lambda \left( \alpha f(a)+\left( 1-\alpha \right) f(b)\right)
+\left( 1-\lambda \right) f(\alpha a+\left( 1-\alpha \right) b)-\frac{1}{b-a}%
\dint\limits_{a}^{b}f(x)dx\right\vert \leq \left( b-a\right)  \label{2-12b}
\end{equation}%
\begin{equation*}
\times \left( \frac{1}{p+1}\right) ^{\frac{1}{p}}\left( \frac{1}{s+1}\right)
^{\frac{1}{q}}.\left\{ 
\begin{array}{cc}
\left[ \varepsilon _{1}^{\frac{1}{p}}C^{\frac{1}{q}}+\varepsilon _{3}^{\frac{%
1}{p}}D^{\frac{1}{q}}\right] , & \alpha \lambda \leq 1-\alpha \leq 1-\lambda
\left( 1-\alpha \right) \\ 
\left[ \varepsilon _{1}^{\frac{1}{p}}C^{\frac{1}{q}}+\varepsilon _{4}^{\frac{%
1}{p}}D^{\frac{1}{q}}\right] , & \alpha \lambda \leq 1-\lambda \left(
1-\alpha \right) \leq 1-\alpha \\ 
\left[ \varepsilon _{2}^{\frac{1}{p}}C^{\frac{1}{q}}+\varepsilon _{3}^{\frac{%
1}{p}}D^{\frac{1}{q}}\right] , & 1-\alpha \leq \alpha \lambda \leq 1-\lambda
\left( 1-\alpha \right)%
\end{array}%
\right. ,
\end{equation*}%
where $\varepsilon _{1},\ \varepsilon _{2},\ \varepsilon _{3},\ \varepsilon
_{4},\ C$ and $D$ are defined as in (\ref{2-12a}).
\end{corollary}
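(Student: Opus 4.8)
The plan is to obtain Corollary~\ref{2.4} as a direct specialization of Theorem~\ref{2.3} to the weight $h(t)=t^{s}$. First I would verify that the hypotheses of Theorem~\ref{2.3} are met for this choice: for $s\in(0,1]$ the function $h(t)=t^{s}$ is non-negative, not identically zero, and belongs to $L[0,1]$. Moreover, by the discussion following \eqref{1-a}, the assumption that $\left\vert f^{\prime}\right\vert^{q}$ is $s$-convex on $[a,b]$ is precisely the assumption $\left\vert f^{\prime}\right\vert^{q}\in SX(h,[a,b])$ for $h(t)=t^{s}$. The restriction $I\subseteq[0,\infty)$ is imposed exactly so that $s$-convexity in the second sense is defined. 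Hence Theorem~\ref{2.3} applies without modification.

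Next I would isolate the only place in which $h$ enters the conclusion of Theorem~\ref{2.3}. The quantities $C$ and $D$ of \eqref{2-12a} depend only on $f^{\prime}$ and $\alpha$, and the constants $\varepsilon_{1},\varepsilon_{2},\varepsilon_{3},\varepsilon_{4}$ depend only on $\alpha$, $\lambda$ and $p$; none of these involves $h$. The sole $h$-dependent factor in \eqref{2-12} is $\left(\int_{0}^{1}h(t)\,dt\right)^{1/q}$. Thus the entire content of the corollary reduces to evaluating this one factor.

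Finally I would compute the elementary integral
\begin{equation*}
\dint\limits_{0}^{1}h(t)\,dt=\dint\limits_{0}^{1}t^{s}\,dt=\frac{1}{s+1},
\end{equation*}
so that $\left(\int_{0}^{1}h(t)\,dt\right)^{1/q}=\left(\frac{1}{s+1}\right)^{1/q}$. Substituting this value, together with the unchanged $C$, $D$ and $\varepsilon_{i}$, into the three-case bound of Theorem~\ref{2.3} yields \eqref{2-12b} line for line.

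Since no tool beyond Theorem~\ref{2.3} is invoked and the only computation is the displayed integral, there is essentially no obstacle here. The only point deserving any care is the purely clerical check that the three case conditions $\alpha\lambda\leq 1-\alpha\leq 1-\lambda(1-\alpha)$, $\alpha\lambda\leq 1-\lambda(1-\alpha)\leq 1-\alpha$, and $1-\alpha\leq\alpha\lambda\leq 1-\lambda(1-\alpha)$ are identical in the two statements, so that the three branches of \eqref{2-12} and \eqref{2-12b} correspond directly; they are, and they do.
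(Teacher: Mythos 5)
Your proposal is correct and is exactly the argument the paper intends: the corollary is stated without a written proof precisely because it is the direct specialization of Theorem \ref{2.3} to $h(t)=t^{s}$, where the only $h$-dependent factor $\left(\int_{0}^{1}h(t)\,dt\right)^{1/q}$ evaluates to $\left(\frac{1}{s+1}\right)^{1/q}$ while $C$, $D$ and the $\varepsilon_{i}$ are unchanged. Nothing further is needed.
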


\begin{corollary}
Under the assumptions of Theorem \ref{2.3} with $h(t)=t$, we have%
\begin{equation*}
\left\vert \lambda \left( \alpha f(a)+\left( 1-\alpha \right) f(b)\right)
+\left( 1-\lambda \right) f(\alpha a+\left( 1-\alpha \right) b)-\frac{1}{b-a}%
\dint\limits_{a}^{b}f(x)dx\right\vert \leq \left( b-a\right)
\end{equation*}%
\begin{equation*}
\times \left( \frac{1}{p+1}\right) ^{\frac{1}{p}}\left( \frac{1}{2}\right) ^{%
\frac{1}{q}}.\left\{ 
\begin{array}{cc}
\left[ \varepsilon _{1}^{\frac{1}{p}}C^{\frac{1}{q}}+\varepsilon _{3}^{\frac{%
1}{p}}D^{\frac{1}{q}}\right] , & \alpha \lambda \leq 1-\alpha \leq 1-\lambda
\left( 1-\alpha \right) \\ 
\left[ \varepsilon _{1}^{\frac{1}{p}}C^{\frac{1}{q}}+\varepsilon _{4}^{\frac{%
1}{p}}D^{\frac{1}{q}}\right] , & \alpha \lambda \leq 1-\lambda \left(
1-\alpha \right) \leq 1-\alpha \\ 
\left[ \varepsilon _{2}^{\frac{1}{p}}C^{\frac{1}{q}}+\varepsilon _{3}^{\frac{%
1}{p}}D^{\frac{1}{q}}\right] , & 1-\alpha \leq \alpha \lambda \leq 1-\lambda
\left( 1-\alpha \right)%
\end{array}%
\right. ,
\end{equation*}%
where $\varepsilon _{1},\ \varepsilon _{2},\ \varepsilon _{3},\ \varepsilon
_{4},\ C$ and $D$ are defined as in (\ref{2-12a}).
\end{corollary}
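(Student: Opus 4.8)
The plan is to obtain this inequality as an immediate specialization of Theorem \ref{2.3}, since $h(t)=t$ is precisely the choice that reduces $h$-convexity to ordinary convexity, as recorded in the discussion following inequality (\ref{1-a}). First I would verify that the hypotheses of Theorem \ref{2.3} are met: the assumption that $\left\vert f^{\prime}\right\vert^{q}$ is $h$-convex with $h(t)=t$ is exactly the statement that $\left\vert f^{\prime}\right\vert^{q}$ is convex on $[a,b]$, so Theorem \ref{2.3} applies verbatim with this particular $h$, and the requirement $h\in L[0,1]$, $h\neq 0$ is trivially satisfied by $h(t)=t$.

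Next I would isolate which quantities in the conclusion (\ref{2-12}) depend on $h$. The constants $C$, $D$ and $\varepsilon_{1},\varepsilon_{2},\varepsilon_{3},\varepsilon_{4}$ are defined in (\ref{2-12a}) purely in terms of $\alpha$, $\lambda$ and $p$; they contain no reference to $h$ and therefore carry over unchanged, along with the three branch conditions distinguishing the cases. The only place where $h$ enters inequality (\ref{2-12}) is through the single scalar factor $\left(\int_{0}^{1}h(t)\,dt\right)^{1/q}$.

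The one computation required is then the elementary evaluation $\int_{0}^{1}h(t)\,dt=\int_{0}^{1}t\,dt=\tfrac{1}{2}$, so that this factor becomes $\left(\tfrac{1}{2}\right)^{1/q}$. Substituting $\left(\tfrac{1}{2}\right)^{1/q}$ for $\left(\int_{0}^{1}h(t)\,dt\right)^{1/q}$ in (\ref{2-12}), while retaining every branch condition and every constant from (\ref{2-12a}), produces exactly the claimed inequality. I do not expect any genuine obstacle here: the entire substantive content is inherited from Theorem \ref{2.3}, and the proof collapses to computing one trivial definite integral and recording that nothing else is altered by the choice $h(t)=t$.
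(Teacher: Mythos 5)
Your proof is correct and matches the paper's (implicit) argument exactly: the corollary is obtained by specializing Theorem \ref{2.3} to $h(t)=t$, noting that $C$, $D$, $\varepsilon_{1},\dots,\varepsilon_{4}$ and the case conditions are independent of $h$, and replacing $\left(\int_{0}^{1}h(t)\,dt\right)^{1/q}$ by $\left(\tfrac{1}{2}\right)^{1/q}$. Nothing further is needed.
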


\begin{corollary}
Under the assumptions of Theorem \ref{2.3} with $h(t)=1$,\ we have%
\begin{equation*}
\left\vert \lambda \left( \alpha f(a)+\left( 1-\alpha \right) f(b)\right)
+\left( 1-\lambda \right) f(\alpha a+\left( 1-\alpha \right) b)-\frac{1}{b-a}%
\dint\limits_{a}^{b}f(x)dx\right\vert \leq \left( b-a\right)
\end{equation*}%
\begin{equation*}
\times \left( \frac{1}{p+1}\right) ^{\frac{1}{p}}.\left\{ 
\begin{array}{cc}
\left[ \varepsilon _{1}^{\frac{1}{p}}C^{\frac{1}{q}}+\varepsilon _{3}^{\frac{%
1}{p}}D^{\frac{1}{q}}\right] , & \alpha \lambda \leq 1-\alpha \leq 1-\lambda
\left( 1-\alpha \right) \\ 
\left[ \varepsilon _{1}^{\frac{1}{p}}C^{\frac{1}{q}}+\varepsilon _{4}^{\frac{%
1}{p}}D^{\frac{1}{q}}\right] , & \alpha \lambda \leq 1-\lambda \left(
1-\alpha \right) \leq 1-\alpha \\ 
\left[ \varepsilon _{2}^{\frac{1}{p}}C^{\frac{1}{q}}+\varepsilon _{3}^{\frac{%
1}{p}}D^{\frac{1}{q}}\right] , & 1-\alpha \leq \alpha \lambda \leq 1-\lambda
\left( 1-\alpha \right)%
\end{array}%
\right. ,
\end{equation*}%
where $\varepsilon _{1},\ \varepsilon _{2},\ \varepsilon _{3},\ \varepsilon
_{4},\ C$ and $D$ are defined as in (\ref{2-12a}).
\end{corollary}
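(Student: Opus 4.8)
The plan is to obtain this corollary as a direct specialization of Theorem \ref{2.3}. First I would verify that the constant function $h(t)=1$ meets the standing hypotheses imposed throughout the section, namely that $h$ is non-negative, that $h\in L[0,1]$, and that $h\neq 0$; all three are immediate. Recalling from the introduction that $\left\vert f^{\prime}\right\vert^{q}$ being $h$-convex with $h\equiv 1$ is precisely the statement that $\left\vert f^{\prime}\right\vert^{q}$ is a $P$-function, the premises of Theorem \ref{2.3} are satisfied for this choice of $h$, and so the inequality (\ref{2-12}) applies verbatim.

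Next I would simply substitute $h(t)=1$ into inequality (\ref{2-12}). The only place where $h$ enters the right-hand side of (\ref{2-12}) is through the factor $\left(\int_{0}^{1}h(t)\,dt\right)^{1/q}$: the quantities $C$ and $D$ together with $\varepsilon_{1},\dots,\varepsilon_{4}$ defined in (\ref{2-12a}) depend only on $\alpha$, $\lambda$, $p$, $q$ and on the values of $f^{\prime}$, not on $h$. Computing $\int_{0}^{1}h(t)\,dt=\int_{0}^{1}1\,dt=1$ gives $\left(\int_{0}^{1}h(t)\,dt\right)^{1/q}=1^{1/q}=1$, so this factor disappears, while the prefactor $\left(b-a\right)\left(\tfrac{1}{p+1}\right)^{1/p}$ and all three bracketed branches survive unchanged. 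This yields exactly the asserted bound.

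There is no genuine obstacle here: the result is essentially a one-line reduction once one observes that $h$ influences the right-hand side of Theorem \ref{2.3} solely through its integral over $[0,1]$. The only point worth a moment's care is confirming that the case distinctions in (\ref{2-12}) — governed by the relative sizes of $\alpha\lambda$, $1-\alpha$ and $1-\lambda\left(1-\alpha\right)$, which arise from the sign analysis in (\ref{2-16})--(\ref{2-17}) — are entirely independent of the choice of $h$. Consequently the same three cases reappear unchanged in the conclusion, and the proof is complete.
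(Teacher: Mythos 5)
Your proof is correct and coincides with the paper's (implicit) argument: the corollary is obtained by direct substitution of $h(t)=1$ into inequality (\ref{2-12}) of Theorem \ref{2.3}, where the only $h$-dependence is the factor $\left(\int_{0}^{1}h(t)\,dt\right)^{1/q}=1$, while $C$, $D$, $\varepsilon_{1},\dots,\varepsilon_{4}$ and the case distinctions are unaffected. Your additional checks (that $h\equiv 1$ satisfies the standing hypotheses and corresponds to the $P$-function class) are sound and harmless.
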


\begin{remark}
In Corollary \ref{2.4}, if we take $\alpha =\frac{1}{2}$ and $\lambda =\frac{%
1}{3}$, then we have the following Simpson type inequality 
\begin{equation}
\left\vert \frac{1}{6}\left[ f(a)+4f\left( \frac{a+b}{2}\right) +f(b)\right]
-\frac{1}{b-a}\dint\limits_{a}^{b}f(x)dx\right\vert  \label{2-18}
\end{equation}%
\begin{equation*}
\leq \frac{b-a}{12}\left( \frac{1+2^{p+1}}{3\left( p+1\right) }\right) ^{%
\frac{1}{p}}\left\{ \left( \frac{\left\vert f^{\prime }\left( \frac{a+b}{2}%
\right) \right\vert ^{q}+\left\vert f^{\prime }\left( a\right) \right\vert
^{q}}{s+1}\right) ^{\frac{1}{q}}+\left( \frac{\left\vert f^{\prime }\left( 
\frac{a+b}{2}\right) \right\vert ^{q}+\left\vert f^{\prime }\left( b\right)
\right\vert ^{q}}{s+1}\right) ^{\frac{1}{q}}\right\} ,
\end{equation*}%
which is the same of the inequality (\ref{1-5}).
\end{remark}

\begin{remark}
In Corollary \ref{2.4}, if we take $\alpha =\frac{1}{2}$ and $\lambda =0,$
then we have the following midpoint inequality%
\begin{eqnarray*}
&&\left\vert f\left( \frac{a+b}{2}\right) -\frac{1}{b-a}\dint%
\limits_{a}^{b}f(x)dx\right\vert \\
&\leq &\frac{b-a}{4}\left( \frac{1}{p+1}\right) ^{\frac{1}{p}}\left\{ \left( 
\frac{\left\vert f^{\prime }\left( \frac{a+b}{2}\right) \right\vert
^{q}+\left\vert f^{\prime }\left( a\right) \right\vert ^{q}}{s+1}\right) ^{%
\frac{1}{q}}+\left( \frac{\left\vert f^{\prime }\left( \frac{a+b}{2}\right)
\right\vert ^{q}+\left\vert f^{\prime }\left( b\right) \right\vert ^{q}}{s+1}%
\right) ^{\frac{1}{q}}\right\} .
\end{eqnarray*}%
We note that by inequality 
\begin{equation*}
2^{s-1}\left\vert f^{\prime }\left( \frac{a+b}{2}\right) \right\vert
^{q}\leq \frac{\left\vert f^{\prime }\left( a\right) \right\vert
^{q}+\left\vert f^{\prime }\left( b\right) \right\vert ^{q}}{s+1}
\end{equation*}%
we have%
\begin{eqnarray*}
\left\vert f\left( \frac{a+b}{2}\right) -\frac{1}{b-a}\dint%
\limits_{a}^{b}f(x)dx\right\vert &\leq &\left( \frac{b-a}{4}\right) \left( 
\frac{1}{p+1}\right) ^{\frac{1}{p}}\left( \frac{1}{s+1}\right) ^{\frac{2}{q}}
\\
&&\times \left[ \left( \left( 2^{1-s}+s+1\right) \left\vert f^{\prime
}\left( a\right) \right\vert ^{q}+2^{1-s}\left\vert f^{\prime }\left(
b\right) \right\vert ^{q}\right) ^{\frac{1}{q}}\right. \\
&&+\left. \left( \left( 2^{1-s}+s+1\right) \left\vert f^{\prime }\left(
b\right) \right\vert ^{q}+2^{1-s}\left\vert f^{\prime }\left( a\right)
\right\vert ^{q}\right) ^{\frac{1}{q}}\right] ,
\end{eqnarray*}%
which is the same of the inequality (\ref{1-4a}).
\end{remark}

\begin{remark}
In Corollary \ref{2.4}, if we take $\alpha =\frac{1}{2}$ and $\lambda =1,$
then we have the following trapezoid inequality%
\begin{eqnarray}
&&\left\vert \frac{f\left( a\right) +f\left( b\right) }{2}-\frac{1}{b-a}%
\dint\limits_{a}^{b}f(x)dx\right\vert \leq \frac{b-a}{4}\left( \frac{1}{p+1}%
\right) ^{\frac{1}{p}}  \label{2-19} \\
&&\times \left\{ \left( \frac{\left\vert f^{\prime }\left( \frac{a+b}{2}%
\right) \right\vert ^{q}+\left\vert f^{\prime }\left( a\right) \right\vert
^{q}}{s+1}\right) ^{\frac{1}{q}}+\left( \frac{\left\vert f^{\prime }\left( 
\frac{a+b}{2}\right) \right\vert ^{q}+\left\vert f^{\prime }\left( b\right)
\right\vert ^{q}}{s+1}\right) ^{\frac{1}{q}}\right\} .  \notag
\end{eqnarray}%
We note that the obtained midpoint inequality (\ref{2-19}) is better than
the inequality (\ref{1-6}).
\end{remark}

\begin{theorem}
\label{2.5}Let $f:I\subseteq \mathbb{R\rightarrow R}$ be a differentiable
mapping on $I^{\circ }$ such that $f^{\prime }\in L[a,b]$, where $a,b\in
I^{\circ }$ with $a<b$ and $\alpha ,\lambda \in \left[ 0,1\right] $. If $%
\left\vert f^{\prime }\right\vert ^{q}$ is $h-$concave on $[a,b]$, $q>1,$
then the following inequality holds:%
\begin{equation}
\left\vert \lambda \left( \alpha f(a)+\left( 1-\alpha \right) f(b)\right)
+\left( 1-\lambda \right) f(\alpha a+\left( 1-\alpha \right) b)-\frac{1}{b-a}%
\dint\limits_{a}^{b}f(x)dx\right\vert \leq \left( b-a\right)  \label{2-20}
\end{equation}%
\begin{equation*}
\times \left( \frac{1}{2h\left( \frac{1}{2}\right) }\right) ^{\frac{1}{q}%
}\left( \frac{1}{p+1}\right) ^{\frac{1}{p}}.\left\{ 
\begin{array}{cc}
\left[ \varepsilon _{1}^{\frac{1}{p}}E^{\frac{1}{q}}+\varepsilon _{3}^{\frac{%
1}{p}}F^{\frac{1}{q}}\right] , & \alpha \lambda \leq 1-\alpha \leq 1-\lambda
\left( 1-\alpha \right) \\ 
\left[ \varepsilon _{1}^{\frac{1}{p}}E^{\frac{1}{q}}+\varepsilon _{4}^{\frac{%
1}{p}}F^{\frac{1}{q}}\right] , & \alpha \lambda \leq 1-\lambda \left(
1-\alpha \right) \leq 1-\alpha \\ 
\left[ \varepsilon _{2}^{\frac{1}{p}}E^{\frac{1}{q}}+\varepsilon _{3}^{\frac{%
1}{p}}F^{\frac{1}{q}}\right] , & 1-\alpha \leq \alpha \lambda \leq 1-\lambda
\left( 1-\alpha \right)%
\end{array}%
\right. ,
\end{equation*}%
where%
\begin{equation*}
E=\left( 1-\alpha \right) \left\vert f^{\prime }\left( \frac{\left( 1-\alpha
\right) b+\left( 1+\alpha \right) a}{2}\right) \right\vert ^{q},\ F=\alpha
\left\vert f^{\prime }\left( \frac{\left( 2-\alpha \right) b+\alpha a}{2}%
\right) \right\vert ^{q},
\end{equation*}%
and $\varepsilon _{1},\ \varepsilon _{2},\ \varepsilon _{3},\ \varepsilon
_{4}$ are defined as in (\ref{2-12a}).
\end{theorem}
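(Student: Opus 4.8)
The plan is to follow the same strategy as in the proof of Theorem \ref{2.3}, replacing the use of the right-hand Hermite--Hadamard bound by its left-hand counterpart, which reverses for $h$-concave functions. First I would invoke Lemma \ref{2.1} and apply H\"older's inequality exactly as in \eqref{2-13}, splitting the right-hand side into the product
\[
\left(\int_0^{1-\alpha}|t-\alpha\lambda|^p\,dt\right)^{1/p}\left(\int_0^{1-\alpha}|f'(tb+(1-t)a)|^q\,dt\right)^{1/q}
\]
together with the analogous term over $[1-\alpha,1]$. The factors involving $|t-\alpha\lambda|^p$ and $|t-1+\lambda(1-\alpha)|^p$ are computed exactly as in \eqref{2-16} and \eqref{2-17}, and they produce the quantities $\varepsilon_1,\dots,\varepsilon_4$; this part is identical to Theorem \ref{2.3} and requires no change.

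The genuinely new ingredient is the estimation of the two integrals of $|f'|^q$. Since $|f'|^q$ is now $h$-\emph{concave}, inequality \eqref{1-2} holds with its inequalities reversed; in particular the left-hand side of \eqref{1-2} becomes an upper bound, giving
\[
\frac{1}{d-c}\int_c^d |f'(x)|^q\,dx \le \frac{1}{2h\!\left(\tfrac12\right)}\left|f'\!\left(\frac{c+d}{2}\right)\right|^q
\]
for any subinterval $[c,d]\subseteq[a,b]$. Applying this on $[a,(1-\alpha)b+\alpha a]$ (of length $(1-\alpha)(b-a)$) after the substitution $x=tb+(1-t)a$ yields
\[
\int_0^{1-\alpha}|f'(tb+(1-t)a)|^q\,dt \le \frac{1}{2h\!\left(\tfrac12\right)}(1-\alpha)\left|f'\!\left(\frac{(1-\alpha)b+(1+\alpha)a}{2}\right)\right|^q = \frac{1}{2h\!\left(\tfrac12\right)}E,
\]
and applying it on $[(1-\alpha)b+\alpha a,b]$ (of length $\alpha(b-a)$) gives the matching bound $\tfrac{1}{2h(1/2)}F$. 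The only computation here is to verify that the midpoints of these two subintervals are exactly $\frac{(1-\alpha)b+(1+\alpha)a}{2}$ and $\frac{(2-\alpha)b+\alpha a}{2}$, which is immediate; the degenerate cases $\alpha=0,1$ are handled exactly as in the proof of Theorem \ref{2.3}.

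Finally I would substitute these two bounds together with \eqref{2-16}--\eqref{2-17} back into the H\"older estimate, pull out the common factor $\big(\tfrac{1}{2h(1/2)}\big)^{1/q}$ from the $q$-th roots, and select the correct branch of $\varepsilon$-values according to the three orderings of $\alpha\lambda$, $1-\alpha$ and $1-\lambda(1-\alpha)$, exactly as in \eqref{2-12}. This produces \eqref{2-20}. The main point to be careful about is the direction of \eqref{1-2}: one must confirm that $h$-concavity reverses the left inequality of the Hermite--Hadamard estimate, so that the midpoint value serves as an \emph{upper} bound for the average of $|f'|^q$; everything else is a routine transcription of the proof of Theorem \ref{2.3}.
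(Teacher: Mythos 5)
Your proposal is correct and follows essentially the same route as the paper's own proof: Hölder's inequality applied to the identity of Lemma \ref{2.1} as in \eqref{2-13}, the computations \eqref{2-16}--\eqref{2-17} giving $\varepsilon_1,\dots,\varepsilon_4$, and the reversed (left-hand) Hermite--Hadamard inequality \eqref{1-2} for $h$-concave functions applied on the subintervals $[a,(1-\alpha)b+\alpha a]$ and $[(1-\alpha)b+\alpha a,b]$ to produce $E$ and $F$. The midpoint identifications and the treatment of the degenerate cases $\alpha=0,1$ match the paper exactly.
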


\begin{proof}
We proceed similarly as in the proof Theorem \ref{2.3}. Since $\left\vert
f^{\prime }\right\vert ^{q}$ is $h-$concave on $[a,b],$ for $\alpha \in %
\left[ 0,1\right) $ by the inequality (\ref{1-2}), we get%
\begin{eqnarray}
\dint\limits_{0}^{1-\alpha }\left\vert f^{\prime }\left( tb+(1-t)a\right)
\right\vert ^{q}dt &=&\left( 1-\alpha \right) \left[ \frac{1}{\left(
1-\alpha \right) \left( b-a\right) }\dint\limits_{a}^{\left( 1-\alpha
\right) b+\alpha a}\left\vert f^{\prime }\left( x\right) \right\vert ^{q}dx%
\right]  \notag \\
&\leq &\frac{\left( 1-\alpha \right) }{2h\left( \frac{1}{2}\right) }%
\left\vert f^{\prime }\left( \frac{\left( 1-\alpha \right) b+\left( 1+\alpha
\right) a}{2}\right) \right\vert ^{q}  \label{2-21}
\end{eqnarray}%
The inequality (\ref{2-21}) holds for $\alpha =1$ too. Similarly, for $%
\alpha \in \left( 0,1\right] $ by the inequality (\ref{1-2}), we have%
\begin{eqnarray}
\dint\limits_{1-\alpha }^{1}\left\vert f^{\prime }\left( tb+(1-t)a\right)
\right\vert ^{q}dt &=&\alpha \left[ \frac{1}{\alpha \left( b-a\right) }%
\dint\limits_{\left( 1-\alpha \right) b+\alpha a}^{b}\left\vert f^{\prime
}\left( x\right) \right\vert ^{q}dx\right]  \notag \\
&\leq &\frac{\alpha }{2h\left( \frac{1}{2}\right) }\left\vert f^{\prime
}\left( \frac{\left( 2-\alpha \right) b+\alpha a}{2}\right) \right\vert ^{q}
\label{2-22}
\end{eqnarray}%
The inequality (\ref{2-22}) holds for $\alpha =0$ too. Thus, using (\ref%
{2-16}),(\ref{2-17}),(\ref{2-21})and (\ref{2-22}) in (\ref{2-13}), we obtain
the inequality (\ref{2-20}). This completes the proof.
\end{proof}

\begin{corollary}
\label{2.6}Under the assumptions of Theorem \ref{2.5} with $I\subseteq \left[
0,\infty \right) ,\ h(t)=t^{s},\ s\in \left( 0,1\right] $, we have%
\begin{equation*}
\left\vert \lambda \left( \alpha f(a)+\left( 1-\alpha \right) f(b)\right)
+\left( 1-\lambda \right) f(\alpha a+\left( 1-\alpha \right) b)-\frac{1}{b-a}%
\dint\limits_{a}^{b}f(x)dx\right\vert \leq \left( b-a\right)
\end{equation*}%
\begin{equation*}
\times 2^{\frac{s-1}{q}}\left( \frac{1}{p+1}\right) ^{\frac{1}{p}}.\left\{ 
\begin{array}{cc}
\left[ \varepsilon _{1}^{\frac{1}{p}}E^{\frac{1}{q}}+\varepsilon _{3}^{\frac{%
1}{p}}F^{\frac{1}{q}}\right] , & \alpha \lambda \leq 1-\alpha \leq 1-\lambda
\left( 1-\alpha \right) \\ 
\left[ \varepsilon _{1}^{\frac{1}{p}}E^{\frac{1}{q}}+\varepsilon _{4}^{\frac{%
1}{p}}F^{\frac{1}{q}}\right] , & \alpha \lambda \leq 1-\lambda \left(
1-\alpha \right) \leq 1-\alpha \\ 
\left[ \varepsilon _{2}^{\frac{1}{p}}E^{\frac{1}{q}}+\varepsilon _{3}^{\frac{%
1}{p}}F^{\frac{1}{q}}\right] , & 1-\alpha \leq \alpha \lambda \leq 1-\lambda
\left( 1-\alpha \right)%
\end{array}%
\right. ,
\end{equation*}%
where $\varepsilon _{1},\ \varepsilon _{2},\ \varepsilon _{3},\ \varepsilon
_{4},\ E$ and $F$ are defined as in Theorem \ref{2.5}.
\end{corollary}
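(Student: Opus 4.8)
The plan is to specialize Theorem \ref{2.5} to the particular weight $h(t)=t^{s}$ with $s\in\left(0,1\right]$, so that essentially all the analytic work has already been carried out and what remains is a single evaluation of $h$ at the midpoint. First I would verify that the hypotheses of Theorem \ref{2.5} are met in this setting: with $I\subseteq\left[0,\infty\right)$ the function $h(t)=t^{s}$ is a non-negative, non-vanishing element of $L[0,1]$, so the assumption that $\left\vert f^{\prime}\right\vert^{q}$ is $h$-concave on $[a,b]$ is precisely the hypothesis required to invoke Theorem \ref{2.5}.

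Next I would observe which quantities in the conclusion of Theorem \ref{2.5} actually depend on the choice of $h$. The terms $E$, $F$ and the constants $\varepsilon_{1},\ \varepsilon_{2},\ \varepsilon_{3},\ \varepsilon_{4}$ depend only on $\alpha$, $\lambda$, $p$ and on values of $f^{\prime}$, not on $h$; hence they are carried over verbatim and retain the definitions given in Theorem \ref{2.5}. The sole place where $h$ enters the bound (\ref{2-20}) is through the prefactor $\left(\frac{1}{2h\left(\frac{1}{2}\right)}\right)^{\frac{1}{q}}$, which originates from applying the Hermite--Hadamard inequality (\ref{1-2}) for $h$-concave functions in the proof of Theorem \ref{2.5}.

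The decisive step is therefore the elementary computation of $h$ at $\tfrac{1}{2}$, namely $h\left(\frac{1}{2}\right)=\left(\frac{1}{2}\right)^{s}=2^{-s}$, which gives
\[
\frac{1}{2h\left(\frac{1}{2}\right)}=\frac{1}{2\cdot 2^{-s}}=2^{s-1},
\]
and consequently
\[
\left(\frac{1}{2h\left(\frac{1}{2}\right)}\right)^{\frac{1}{q}}=\left(2^{s-1}\right)^{\frac{1}{q}}=2^{\frac{s-1}{q}}.
\]
Substituting this value of the prefactor into inequality (\ref{2-20}) while leaving every other factor unchanged yields exactly the claimed inequality, completing the proof. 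There is no genuine obstacle here: the statement is a direct corollary of Theorem \ref{2.5}, and the only calculation involved is the evaluation of a power of $\tfrac{1}{2}$.
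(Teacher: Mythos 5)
Your proposal is correct and coincides with the paper's (implicit) argument: Corollary \ref{2.6} is obtained from Theorem \ref{2.5} simply by substituting $h(t)=t^{s}$, noting that $E$, $F$ and $\varepsilon_{1},\dots,\varepsilon_{4}$ are independent of $h$, and computing $\left(\frac{1}{2h\left(\frac{1}{2}\right)}\right)^{\frac{1}{q}}=\left(2^{s-1}\right)^{\frac{1}{q}}=2^{\frac{s-1}{q}}$. Nothing further is required.
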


\begin{corollary}
\label{2.7}Under the assumptions of Theorem \ref{2.5} with $h(t)=t$, we have%
\begin{equation*}
\left\vert \lambda \left( \alpha f(a)+\left( 1-\alpha \right) f(b)\right)
+\left( 1-\lambda \right) f(\alpha a+\left( 1-\alpha \right) b)-\frac{1}{b-a}%
\dint\limits_{a}^{b}f(x)dx\right\vert \leq \left( b-a\right)
\end{equation*}%
\begin{equation*}
\times \left( \frac{1}{p+1}\right) ^{\frac{1}{p}}.\left\{ 
\begin{array}{cc}
\left[ \varepsilon _{1}^{\frac{1}{p}}E^{\frac{1}{q}}+\varepsilon _{3}^{\frac{%
1}{p}}F^{\frac{1}{q}}\right] , & \alpha \lambda \leq 1-\alpha \leq 1-\lambda
\left( 1-\alpha \right) \\ 
\left[ \varepsilon _{1}^{\frac{1}{p}}E^{\frac{1}{q}}+\varepsilon _{4}^{\frac{%
1}{p}}F^{\frac{1}{q}}\right] , & \alpha \lambda \leq 1-\lambda \left(
1-\alpha \right) \leq 1-\alpha \\ 
\left[ \varepsilon _{2}^{\frac{1}{p}}E^{\frac{1}{q}}+\varepsilon _{3}^{\frac{%
1}{p}}F^{\frac{1}{q}}\right] , & 1-\alpha \leq \alpha \lambda \leq 1-\lambda
\left( 1-\alpha \right)%
\end{array}%
\right. ,
\end{equation*}%
where $\varepsilon _{1},\ \varepsilon _{2},\ \varepsilon _{3},\ \varepsilon
_{4},\ E$ and $F$ are defined as in Theorem \ref{2.5}.
\end{corollary}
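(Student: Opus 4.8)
The plan is to obtain Corollary \ref{2.7} as the direct specialization of Theorem \ref{2.5} to the weight $h(t)=t$. The first observation is that membership in the class $SV(h,I)$ with $h(t)=t$ is exactly ordinary concavity, so the hypothesis that $\left\vert f^{\prime }\right\vert ^{q}$ is $h$-concave becomes the statement that $\left\vert f^{\prime }\right\vert ^{q}$ is concave, and Theorem \ref{2.5} applies verbatim. Nothing in the hypotheses needs to be re-checked.

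The only point at which the weight $h$ enters the conclusion of Theorem \ref{2.5} is through the factor $\left( \frac{1}{2h\left( \frac{1}{2}\right) }\right) ^{\frac{1}{q}}$, which arose from applying the left-hand Hermite--Hadamard estimate (\ref{1-2}) to the integrals of $\left\vert f^{\prime }\right\vert ^{q}$ over the two subintervals in (\ref{2-21}) and (\ref{2-22}). Substituting $h(t)=t$ gives $2h\left( \frac{1}{2}\right) =2\cdot \frac{1}{2}=1$, so this factor collapses to $1^{\frac{1}{q}}=1$. The remaining quantities are weight-independent: $E$ and $F$ are fixed combinations of $\alpha $ and of values of $\left\vert f^{\prime }\right\vert ^{q}$, while $\varepsilon _{1},\dots ,\varepsilon _{4}$ come solely from the $p$-power integrals in (\ref{2-16})--(\ref{2-17}). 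Hence all of them pass through unchanged.

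Combining these, the three-case bound of Theorem \ref{2.5} reduces exactly to the claimed inequality, with prefactor $\left( b-a\right) \left( \frac{1}{p+1}\right) ^{\frac{1}{p}}$ and the same case distinction governed by $\alpha \lambda $, $1-\alpha $, and $1-\lambda \left( 1-\alpha \right) $. I expect no real obstacle here: the entire analytic content sits in Theorem \ref{2.5}, and the corollary is a one-line evaluation of the weight-dependent constant. Should a self-contained argument be preferred, one may instead rerun the proof of Theorem \ref{2.5}, observing that with $h(t)=t$ the Hermite--Hadamard step (\ref{1-2}) degenerates to the classical midpoint inequality $\frac{1}{d-c}\int_{c}^{d}g(x)\,dx\leq g\left( \frac{c+d}{2}\right) $ for the concave function $g=\left\vert f^{\prime }\right\vert ^{q}$; this regenerates both the constant $1$ and the same $E$, $F$.
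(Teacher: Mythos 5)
Your proposal is correct and coincides with the paper's (implicit) argument: the corollary is obtained by direct substitution of $h(t)=t$ into Theorem \ref{2.5}, where the only $h$-dependent quantity, the factor $\left( \frac{1}{2h\left( \frac{1}{2}\right) }\right) ^{\frac{1}{q}}$, collapses to $1$ since $2h\left( \frac{1}{2}\right) =1$, while $E$, $F$ and $\varepsilon _{1},\dots ,\varepsilon _{4}$ are unaffected. Your added remark that $h$-concavity with $h(t)=t$ is ordinary concavity, and that the Hermite--Hadamard step degenerates to the classical midpoint inequality, is exactly the intended reading.
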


\begin{corollary}
Under the assumptions of Theorem \ref{2.5} with $h(t)=1$, we have%
\begin{equation*}
\left\vert \lambda \left( \alpha f(a)+\left( 1-\alpha \right) f(b)\right)
+\left( 1-\lambda \right) f(\alpha a+\left( 1-\alpha \right) b)-\frac{1}{b-a}%
\dint\limits_{a}^{b}f(x)dx\right\vert \leq \left( b-a\right)
\end{equation*}%
\begin{equation*}
\times 2^{-\frac{1}{q}}\left( \frac{1}{p+1}\right) ^{\frac{1}{p}}.\left\{ 
\begin{array}{cc}
\left[ \varepsilon _{1}^{\frac{1}{p}}E^{\frac{1}{q}}+\varepsilon _{3}^{\frac{%
1}{p}}F^{\frac{1}{q}}\right] , & \alpha \lambda \leq 1-\alpha \leq 1-\lambda
\left( 1-\alpha \right) \\ 
\left[ \varepsilon _{1}^{\frac{1}{p}}E^{\frac{1}{q}}+\varepsilon _{4}^{\frac{%
1}{p}}F^{\frac{1}{q}}\right] , & \alpha \lambda \leq 1-\lambda \left(
1-\alpha \right) \leq 1-\alpha \\ 
\left[ \varepsilon _{2}^{\frac{1}{p}}E^{\frac{1}{q}}+\varepsilon _{3}^{\frac{%
1}{p}}F^{\frac{1}{q}}\right] , & 1-\alpha \leq \alpha \lambda \leq 1-\lambda
\left( 1-\alpha \right)%
\end{array}%
\right. ,
\end{equation*}%
where $\varepsilon _{1},\ \varepsilon _{2},\ \varepsilon _{3},\ \varepsilon
_{4},\ E$ and $F$ are defined as in Theorem \ref{2.5}.
\end{corollary}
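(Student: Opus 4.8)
The plan is to follow the template of the proof of Theorem \ref{2.3}, replacing the use of the right-hand (convex) half of (\ref{1-2}) by its reversed, $h$-concave counterpart. First I would start from the identity in Lemma \ref{2.1}, pass to absolute values, and apply H\"older's integral inequality with exponents $p$ and $q$ to the two integrals arising from the splitting of $[0,1]$ at the point $1-\alpha$. This reproduces the bound (\ref{2-13}) verbatim, in which the weights $|t-\alpha\lambda|$ and $|t-1+\lambda(1-\alpha)|$ appear raised to the power $p$ and are separated from the factors $|f'(tb+(1-t)a)|^q$; this opening step is identical to Theorem \ref{2.3} and requires no change.

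The decisive difference lies in the treatment of the two integrals $\int_0^{1-\alpha}|f'(tb+(1-t)a)|^q\,dt$ and $\int_{1-\alpha}^{1}|f'(tb+(1-t)a)|^q\,dt$. Since $|f'|^q$ is now $h$-concave, the inequality (\ref{1-2}) holds with all its inequalities reversed, and the half that upper-bounds an average is the reversed midpoint estimate $\frac{1}{d-c}\int_c^d|f'(x)|^q\,dx\le\frac{1}{2h(1/2)}|f'(\frac{c+d}{2})|^q$. After the change of variable $x=tb+(1-t)a$ I would recognise the first integral as $(1-\alpha)$ times the average of $|f'|^q$ over $[a,(1-\alpha)b+\alpha a]$ and the second as $\alpha$ times the average over $[(1-\alpha)b+\alpha a,b]$. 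Applying the reversed midpoint estimate to each subinterval and computing the two midpoints, which are $\frac{(1-\alpha)b+(1+\alpha)a}{2}$ and $\frac{(2-\alpha)b+\alpha a}{2}$, produces exactly the quantities $E$ and $F$, giving the two bounds $\int_0^{1-\alpha}|f'(tb+(1-t)a)|^q\,dt\le\frac{1}{2h(1/2)}E$ and $\int_{1-\alpha}^{1}|f'(tb+(1-t)a)|^q\,dt\le\frac{1}{2h(1/2)}F$; the degenerate values $\alpha=1$ and $\alpha=0$ are handled separately by observing that the relevant integral is then empty.

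For the weight factors I would simply reuse the elementary computations (\ref{2-16}) and (\ref{2-17}) already carried out for Theorem \ref{2.3}: the integrals $\int_0^{1-\alpha}|t-\alpha\lambda|^p\,dt$ and $\int_{1-\alpha}^{1}|t-1+\lambda(1-\alpha)|^p\,dt$ depend only on where the linear integrands change sign, so they split according to the orderings of $\alpha\lambda$, $1-\alpha$ and $1-\lambda(1-\alpha)$ and yield the constants $\varepsilon_1,\varepsilon_2,\varepsilon_3,\varepsilon_4$. Substituting (\ref{2-16}), (\ref{2-17}) and the two midpoint bounds just obtained into (\ref{2-13}) and organising the three admissible orderings then gives the three-case estimate (\ref{2-20}).

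The step I expect to demand the most care is the second one: fixing the correct direction of the reversed inequality (\ref{1-2}) and, above all, correctly computing the two midpoints after the linear substitution, since the entire novelty of the concave case is that the endpoint data $|f'(a)|$ and $|f'(b)|$ appearing in Theorem \ref{2.3} are replaced by the single interior midpoint values that define $E$ and $F$. Once (\ref{2-16}) and (\ref{2-17}) are invoked, the case bookkeeping for the $p$-th power weights is entirely routine.
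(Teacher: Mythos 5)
Your proposal is correct and takes essentially the same route as the paper: it reconstructs the proof of Theorem \ref{2.5} --- Lemma \ref{2.1} with H\"{o}lder's inequality giving (\ref{2-13}), the reversed inequality (\ref{1-2}) for $h$-concave functions giving the midpoint bounds $E$ and $F$ with the factor $\frac{1}{2h(1/2)}$, and the weight integrals (\ref{2-16})--(\ref{2-17}) giving $\varepsilon _{1},\dots ,\varepsilon _{4}$ --- of which the stated corollary is the immediate specialization $h(t)=1$. The only step you leave implicit is that final substitution, $2h\left( \frac{1}{2}\right) =2$, which produces the stated factor $2^{-\frac{1}{q}}$; this is trivial and does not constitute a gap.
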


\begin{corollary}
Under the assumptions of Theorem \ref{2.5} with $h(t)=\frac{1}{t}$, $t\in
\left( 0,1\right) ,\ $we have%
\begin{equation*}
\left\vert \lambda \left( f^{\prime }\alpha f(a)+\left( 1-\alpha \right)
f(b)\right) +\left( 1-\lambda \right) f(\alpha a+\left( 1-\alpha \right) b)-%
\frac{1}{b-a}\dint\limits_{a}^{b}f(x)dx\right\vert \leq \left( b-a\right)
\end{equation*}%
\begin{equation*}
\times 4^{-\frac{1}{q}}\left( \frac{1}{p+1}\right) ^{\frac{1}{p}}.\left\{ 
\begin{array}{cc}
\left[ \varepsilon _{1}^{\frac{1}{p}}E^{\frac{1}{q}}+\varepsilon _{3}^{\frac{%
1}{p}}F^{\frac{1}{q}}\right] , & \alpha \lambda \leq 1-\alpha \leq 1-\lambda
\left( 1-\alpha \right) \\ 
\left[ \varepsilon _{1}^{\frac{1}{p}}E^{\frac{1}{q}}+\varepsilon _{4}^{\frac{%
1}{p}}F^{\frac{1}{q}}\right] , & \alpha \lambda \leq 1-\lambda \left(
1-\alpha \right) \leq 1-\alpha \\ 
\left[ \varepsilon _{2}^{\frac{1}{p}}E^{\frac{1}{q}}+\varepsilon _{3}^{\frac{%
1}{p}}F^{\frac{1}{q}}\right] , & 1-\alpha \leq \alpha \lambda \leq 1-\lambda
\left( 1-\alpha \right)%
\end{array}%
\right. ,
\end{equation*}%
where $\varepsilon _{1},\ \varepsilon _{2},\ \varepsilon _{3},\ \varepsilon
_{4},\ E$ and $F$ are defined as in Theorem \ref{2.5}.
\end{corollary}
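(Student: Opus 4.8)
The plan is to derive this corollary directly from Theorem~\ref{2.5} by specializing to the weight $h(t)=\frac{1}{t}$ on $(0,1)$. First I would check that this choice is admissible: the function $h(t)=1/t$ is non-negative and not identically zero on $(0,1)$, and the hypothesis that $\left\vert f^{\prime }\right\vert ^{q}$ is $h$-concave for this particular $h$ is exactly the assumption appearing in the statement (it amounts to $\left\vert f^{\prime }\right\vert ^{q}$ belonging to the Godunova--Levin-type class obtained by reversing the defining inequality). Thus the entire conclusion of Theorem~\ref{2.5}, including the three-case bound and the quantities $E$, $F$, and $\varepsilon _{1},\dots ,\varepsilon _{4}$, applies without modification.

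The key observation is that, among all the ingredients in the conclusion of Theorem~\ref{2.5}, only the multiplicative constant $\left( \frac{1}{2h(1/2)}\right) ^{1/q}$ depends on the choice of $h$; the remainder terms $E$ and $F$ and the coefficients $\varepsilon _{i}$ are built solely from $f^{\prime }$, $\alpha $, $\lambda $, and $p$, and so are untouched by the specialization. Hence the single computational step is to evaluate this constant at $h(t)=1/t$. Since $h\!\left( \frac{1}{2}\right) =2$, we obtain $\frac{1}{2h(1/2)}=\frac{1}{4}$, so that $\left( \frac{1}{2h(1/2)}\right) ^{1/q}=4^{-1/q}$, which is precisely the prefactor stated in the corollary. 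Substituting this value into the three-case estimate of Theorem~\ref{2.5} yields the claimed inequality.

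There is no genuine obstacle in this argument beyond confirming admissibility of the weight; it is a one-line specialization in the same spirit as Corollaries~\ref{2.6} and~\ref{2.7}. Indeed the same recipe reproduces those results consistently: $h(t)=t^{s}$ gives $h(1/2)=2^{-s}$ and hence the factor $2^{(s-1)/q}$, while $h(t)=t$ gives $h(1/2)=\frac{1}{2}$ and hence a trivial prefactor of $1$. The present case $h(t)=1/t$ simply produces the largest of these constants, $4^{-1/q}$, reflecting the weaker control afforded by the Godunova--Levin weight.
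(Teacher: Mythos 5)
Your proof is correct and is exactly the paper's (implicit) argument: the corollary is an immediate specialization of Theorem \ref{2.5} in which only the factor $\left( \frac{1}{2h\left( \frac{1}{2}\right) }\right) ^{\frac{1}{q}}$ depends on $h$, and $h\left( \frac{1}{2}\right) =2$ yields the stated prefactor $4^{-\frac{1}{q}}$, with $E$, $F$, $\varepsilon _{1},\dots ,\varepsilon _{4}$ unchanged. One small slip in your closing remark: since $\frac{1}{4}<\frac{1}{2}<2^{s-1}\leq 1$, the constant $4^{-\frac{1}{q}}$ is the \emph{smallest} of the prefactors you compare, not the largest, though this aside does not affect the validity of the derivation.
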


\begin{remark}
In Corollary \ref{2.6}, if we take $\alpha =\frac{1}{2}$ and $\lambda =1,$
then we have the following trapezoid inequality%
\begin{eqnarray*}
&&\left\vert \frac{f\left( a\right) +f\left( b\right) }{2}-\frac{1}{b-a}%
\dint\limits_{a}^{b}f(x)dx\right\vert \\
&\leq &\frac{b-a}{4}\left( \frac{1}{p+1}\right) ^{\frac{1}{p}}\times \left( 
\frac{1}{2}\right) ^{\frac{1-s}{q}}\left[ \left\vert f^{\prime }\left( \frac{%
3b+a}{4}\right) \right\vert +\left\vert f^{\prime }\left( \frac{3a+b}{4}%
\right) \right\vert \right]
\end{eqnarray*}%
which is the same of the inequality in \cite[Theorem 8 (i)]{P10}.
\end{remark}

\begin{remark}
In Corollary \ref{2.6}, if we take $\alpha =\frac{1}{2}$ and $\lambda =0,$
then we have the following midpoint inequality%
\begin{eqnarray*}
&&\left\vert f\left( \frac{a+b}{2}\right) -\frac{1}{b-a}\dint%
\limits_{a}^{b}f(x)dx\right\vert \\
&\leq &\frac{b-a}{4}\left( \frac{1}{p+1}\right) ^{\frac{1}{p}}\times \left( 
\frac{1}{2}\right) ^{\frac{1-s}{q}}\left[ \left\vert f^{\prime }\left( \frac{%
3b+a}{4}\right) \right\vert +\left\vert f^{\prime }\left( \frac{3a+b}{4}%
\right) \right\vert \right]
\end{eqnarray*}%
which is the same of the inequality in \cite[Theorem 8 (ii)]{P10}.
\end{remark}

\begin{remark}
In Corollary \ref{2.7}, if we take $\alpha =\frac{1}{2}$ and $\lambda =1,$
then we have the following trapezoid inequality%
\begin{eqnarray}
&&\left\vert \frac{f\left( a\right) +f\left( b\right) }{2}-\frac{1}{b-a}%
\dint\limits_{a}^{b}f(x)dx\right\vert  \label{2-23} \\
&\leq &\frac{b-a}{4}\left( \frac{1}{p+1}\right) ^{\frac{1}{p}}\left[
\left\vert f^{\prime }\left( \frac{3b+a}{4}\right) \right\vert +\left\vert
f^{\prime }\left( \frac{3a+b}{4}\right) \right\vert \right]  \notag
\end{eqnarray}%
which is the same of the inequality in \cite[Theorem 2]{KBOP07}.
\end{remark}

\begin{remark}
In Corollary \ref{2.7}, if we take $\alpha =\frac{1}{2}$ and $\lambda =0,$
then we have the following trapezoid inequality%
\begin{eqnarray}
&&\left\vert f\left( \frac{a+b}{2}\right) -\frac{1}{b-a}\dint%
\limits_{a}^{b}f(x)dx\right\vert  \label{2-24} \\
&\leq &\frac{b-a}{4}\left( \frac{1}{p+1}\right) ^{\frac{1}{p}}\left[
\left\vert f^{\prime }\left( \frac{3b+a}{4}\right) \right\vert +\left\vert
f^{\prime }\left( \frac{3a+b}{4}\right) \right\vert \right]  \notag
\end{eqnarray}%
which is the same of the inequality in \cite[Theorem 2.5]{ADK11}.
\end{remark}

\begin{remark}
In Corollary \ref{2.7}, since $\left\vert f^{\prime }\right\vert ^{q},\ q>1,$
is concave on $\left[ a,b\right] ,$ using the power mean inequality, we have%
\begin{eqnarray*}
\left\vert f^{\prime }\left( \lambda x+\left( 1-\lambda \right) y\right)
\right\vert ^{q} &\geq &\lambda \left\vert f^{\prime }\left( x\right)
\right\vert ^{q}+\left( 1-\lambda \right) \left\vert f^{\prime }\left(
y\right) \right\vert ^{q} \\
&\geq &\left( \lambda \left\vert f^{\prime }\left( x\right) \right\vert
+\left( 1-\lambda \right) \left\vert f^{\prime }\left( y\right) \right\vert
\right) ^{q},
\end{eqnarray*}%
$\forall x,y\in \left[ a,b\right] $ and $\lambda \in \left[ 0,1\right] .$
Hence%
\begin{equation*}
\left\vert f^{\prime }\left( \lambda x+\left( 1-\lambda \right) y\right)
\right\vert \geq \lambda \left\vert f^{\prime }\left( x\right) \right\vert
+\left( 1-\lambda \right) \left\vert f^{\prime }\left( y\right) \right\vert
\end{equation*}%
so $\left\vert f^{\prime }\right\vert $ is also concave. Then by the
inequality (\ref{1-1}), we have%
\begin{equation}
\left\vert f^{\prime }\left( \frac{3b+a}{4}\right) \right\vert +\left\vert
f^{\prime }\left( \frac{3a+b}{4}\right) \right\vert \leq 2\left\vert
f^{\prime }\left( \frac{a+b}{2}\right) \right\vert .  \label{2-25}
\end{equation}%
Thus, using the inequality (\ref{2-25}) in (\ref{2-23}) and (\ref{2-24}) we
get%
\begin{eqnarray*}
\left\vert \frac{f\left( a\right) +f\left( b\right) }{2}-\frac{1}{b-a}%
\dint\limits_{a}^{b}f(x)dx\right\vert &\leq &\frac{b-a}{2}\left( \frac{1}{p+1%
}\right) ^{\frac{1}{p}}\left\vert f^{\prime }\left( \frac{a+b}{2}\right)
\right\vert , \\
\left\vert f\left( \frac{a+b}{2}\right) -\frac{1}{b-a}\dint%
\limits_{a}^{b}f(x)dx\right\vert &\leq &\frac{b-a}{2}\left( \frac{1}{p+1}%
\right) ^{\frac{1}{p}}\left\vert f^{\prime }\left( \frac{a+b}{2}\right)
\right\vert .
\end{eqnarray*}
\end{remark}

\section{Some applications for special means}

Let us recall the following special means of arbitrary real numbers $a,b$
with $a\neq b$ and $\alpha \in \left[ 0,1\right] :$

\begin{enumerate}
\item The weighted arithmetic mean%
\begin{equation*}
A_{\alpha }\left( a,b\right) :=\alpha a+(1-\alpha )b,~a,b\in 
%TCIMACRO{\U{211d} }%
%BeginExpansion
\mathbb{R}
%EndExpansion
.
\end{equation*}

\item The unweighted arithmetic mean%
\begin{equation*}
A\left( a,b\right) :=\frac{a+b}{2},~a,b\in 
%TCIMACRO{\U{211d} }%
%BeginExpansion
\mathbb{R}
%EndExpansion
.
\end{equation*}

\item The Logarithmic mean%
\begin{equation*}
L\left( a,b\right) :=\frac{b-a}{\ln \left\vert b\right\vert -\ln \left\vert
a\right\vert },\ \ \left\vert a\right\vert \neq \left\vert b\right\vert ,\
ab\neq 0.
\end{equation*}

\item Then $p-$Logarithmic mean%
\begin{equation*}
L_{p}\left( a,b\right) :=\ \left( \frac{b^{p+1}-a^{p+1}}{(p+1)(b-a)}\right)
^{\frac{1}{p}}\ ,\ p\in 
%TCIMACRO{\U{211d} }%
%BeginExpansion
\mathbb{R}
%EndExpansion
\backslash \left\{ -1,0\right\} ,\ a,b>0.
\end{equation*}
\end{enumerate}

From known Example 1 in \cite{HM94}, we may find that for any \ $s\in \left(
0,1\right) $ and $\beta >0,$ $f:\left[ 0,\infty \right) \rightarrow \left[
0,\infty \right) $, $f(t)=\beta t^{s},\ f\in K_{s}^{2}.$

Now, using the resuls of Section 2, some new inequalities are derived for
the above means.

\begin{proposition}
Let $a,b\in 
%TCIMACRO{\U{211d} }%
%BeginExpansion
\mathbb{R}
%EndExpansion
$ with $0<a<b,\ q\geq 1$ and \ $s\in \left( 0,\frac{1}{q}\right) $we have
the following inequality:%
\begin{eqnarray*}
&&\left\vert \lambda A_{\alpha }\left( a^{s+1},b^{s+1}\right) +\left(
1-\lambda \right) A_{\alpha }^{s+1}\left( a,b\right) -L_{s+1}^{s+1}\left(
a,b\right) \right\vert \\
&\leq &\left\{ 
\begin{array}{cc}
\begin{array}{c}
\left( b-a\right) \left( s+1\right) \left\{ \gamma _{2}^{1-\frac{1}{q}%
}\left( \mu _{1}^{\ast }b^{sq}+\mu _{2}^{\ast }a^{sq}\right) ^{\frac{1}{q}%
}\right. \\ 
\left. +\upsilon _{2}^{1-\frac{1}{q}}\left( \eta _{3}^{\ast }b^{sq}+\eta
_{4}^{\ast }a^{sq}\right) ^{\frac{1}{q}}\right\} ,%
\end{array}
& \alpha \lambda \leq 1-\alpha \leq 1-\lambda \left( 1-\alpha \right) \\ 
\begin{array}{c}
\left( b-a\right) \left( s+1\right) \left\{ \gamma _{2}^{1-\frac{1}{q}%
}\left( \mu _{1}^{\ast }b^{sq}+\mu _{2}^{\ast }a^{sq}\right) ^{\frac{1}{q}%
}\right. \\ 
\left. +\upsilon _{1}^{1-\frac{1}{q}}\left( \eta _{1}^{\ast }b^{sq}+\eta
_{2}^{\ast }a^{sq}\right) ^{\frac{1}{q}}\right\} ,%
\end{array}
& \alpha \lambda \leq 1-\lambda \left( 1-\alpha \right) \leq 1-\alpha \\ 
\begin{array}{c}
\left( b-a\right) \left( s+1\right) \left\{ \gamma _{1}^{1-\frac{1}{q}%
}\left( \mu _{3}^{\ast }b^{sq}+\mu _{4}^{\ast }a^{sq}\right) ^{\frac{1}{q}%
}\right. \\ 
\left. +\upsilon _{2}^{1-\frac{1}{q}}\left( \eta _{3}^{\ast }b^{sq}+\eta
_{4}^{\ast }a^{sq}\right) ^{\frac{1}{q}}\right\} ,%
\end{array}
& 1-\alpha \leq \alpha \lambda \leq 1-\lambda \left( 1-\alpha \right)%
\end{array}%
\right. ,
\end{eqnarray*}%
where $\gamma _{1},\ \gamma _{2},\ \upsilon _{1},\ \ \upsilon _{2},$ $\mu
_{1}^{\ast },\ \mu _{2}^{\ast },\ \mu _{3}^{\ast },\ \mu _{4}^{\ast },\ \eta
_{1}^{\ast },\ \eta _{2}^{\ast },\ \eta _{3}^{\ast },\ \eta _{4}^{\ast }$
numbers are defined as in Corollary \ref{2.2a}.
\end{proposition}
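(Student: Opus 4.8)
The plan is to apply Corollary \ref{2.2a} to the single test function $f(t)=t^{s+1}$ on the interval $[a,b]\subset(0,\infty)$ and then translate each term of the resulting inequality into the language of the special means. First I would record the three algebraic identities that make the correspondence work: with $f(t)=t^{s+1}$ one has $\alpha f(a)+(1-\alpha)f(b)=\alpha a^{s+1}+(1-\alpha)b^{s+1}=A_{\alpha}(a^{s+1},b^{s+1})$, next $f(\alpha a+(1-\alpha)b)=(\alpha a+(1-\alpha)b)^{s+1}=A_{\alpha}^{s+1}(a,b)$, and finally
\[
\frac{1}{b-a}\dint\limits_{a}^{b}t^{s+1}\,dt=\frac{b^{s+2}-a^{s+2}}{(s+2)(b-a)}=L_{s+1}^{s+1}(a,b).
\]
Hence the left-hand side of the corollary is exactly the quantity $\left|\lambda A_{\alpha}(a^{s+1},b^{s+1})+(1-\lambda)A_{\alpha}^{s+1}(a,b)-L_{s+1}^{s+1}(a,b)\right|$ that appears in the statement.

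The step that needs care is verifying the hypothesis that $\left|f^{\prime}\right|^{q}$ is $s$-convex on $[a,b]$, since Corollary \ref{2.2a} is stated for $h(t)=t^{s}$. Because $f^{\prime}(t)=(s+1)t^{s}$ and $t>0$ throughout $[a,b]$, we have $\left|f^{\prime}(t)\right|^{q}=(s+1)^{q}t^{sq}$, a positive constant multiple of $t^{sq}$. The hypothesis $s\in(0,1/q)$ forces $sq\in(0,1)$, so by the Example cited from \cite{HM94} the map $t\mapsto(s+1)^{q}t^{sq}$ belongs to $K_{sq}^{2}$, i.e.\ it is $sq$-convex. The main obstacle is bridging from $sq$-convexity to the required $s$-convexity: since $q\geq1$ gives $s\leq sq$, and since for a base in $(0,1)$ the smaller exponent yields the larger value, one has $\alpha^{s}\geq\alpha^{sq}$ and $(1-\alpha)^{s}\geq(1-\alpha)^{sq}$ for $\alpha\in(0,1)$. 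As $\left|f^{\prime}\right|^{q}$ is non-negative, its $sq$-convexity estimate is dominated termwise by the corresponding $s$-convexity estimate, so $\left|f^{\prime}\right|^{q}$ is indeed $s$-convex on $[a,b]$. This is precisely where both standing hypotheses are consumed: $s<1/q$ to invoke \cite{HM94}, and $q\geq1$ to weaken the exponent from $sq$ to $s$.

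With the hypotheses in place I would simply invoke Corollary \ref{2.2a}. The only remaining bookkeeping is to substitute $\left|f^{\prime}(b)\right|^{q}=(s+1)^{q}b^{sq}$ and $\left|f^{\prime}(a)\right|^{q}=(s+1)^{q}a^{sq}$ into each bracket of the three cases and to pull the common factor $(s+1)^{q}$ out through the exponent $1/q$. This produces the multiplicative constant $(s+1)$ in front of each case and replaces the coefficients $\mu_{i}^{\ast},\eta_{i}^{\ast}$ acting on $\left|f^{\prime}(b)\right|^{q},\left|f^{\prime}(a)\right|^{q}$ by the same coefficients acting on $b^{sq},a^{sq}$. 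Matching the three case conditions $\alpha\lambda\leq1-\alpha\leq1-\lambda(1-\alpha)$, $\alpha\lambda\leq1-\lambda(1-\alpha)\leq1-\alpha$, and $1-\alpha\leq\alpha\lambda\leq1-\lambda(1-\alpha)$ term by term then yields exactly the stated inequality, completing the proof.
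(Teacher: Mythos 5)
Your proposal is correct and takes essentially the same route as the paper: apply inequality (\ref{2-8}) of Corollary \ref{2.2a} to $f(t)=t^{s+1}$, note that $\left\vert f^{\prime}(t)\right\vert^{q}=(s+1)^{q}t^{qs}$ with $qs\in(0,1)$, pull the factor $(s+1)^{q}$ out through the power $1/q$, and rewrite the resulting bound in terms of the special means. You are in fact slightly more thorough than the paper's one-line proof, which simply asserts the required $s$-convexity of $(s+1)^{q}t^{qs}$; your explicit passage from $qs$-convexity (via the example of \cite{HM94}) to $s$-convexity, using $\alpha^{qs}\leq\alpha^{s}$ for $\alpha\in(0,1)$ and $q\geq 1$, supplies exactly the step the paper leaves implicit.
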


\begin{proof}
The assertion follows from applied the inequality (\ref{2-8}) to the
function $f(t)=t^{s+1}$,$\ t\in \left[ a,b\right] $ and $s\in \left( 0,\frac{%
1}{q}\right) ,$ which implies that $f^{\prime }(t)=(s+1)t^{s}$,$\ \ t\in %
\left[ a,b\right] $ and $\left\vert f^{\prime }(t)\right\vert
^{q}=(s+1)^{q}t^{qs}$,$\ \ t\in \left[ a,b\right] $ is a $s-$convex function
in the second sense since $qs\in \left( 0,1\right) $ and $(s+1)^{q}>0.$
\end{proof}

\begin{proposition}
Let $a,b\in 
%TCIMACRO{\U{211d} }%
%BeginExpansion
\mathbb{R}
%EndExpansion
$ with $0<a<b,\ p,q>1,\ \frac{1}{p}+\frac{1}{q}=1$ and \ $s\in \left( 0,%
\frac{1}{q}\right) $we have the following inequality:%
\begin{equation*}
\left\vert \lambda A_{\alpha }\left( a^{s+1},b^{s+1}\right) +\left(
1-\lambda \right) A_{\alpha }^{s+1}\left( a,b\right) -L_{s+1}^{s+1}\left(
a,b\right) \right\vert \leq \left( b-a\right) \left( \frac{1}{p+1}\right) ^{%
\frac{1}{p}}\left( s+1\right) ^{1-\frac{1}{q}}
\end{equation*}%
\begin{equation*}
\times \left\{ 
\begin{array}{cc}
\left[ \left( 1-\alpha \right) ^{\frac{1}{q}}\varepsilon _{1}^{\frac{1}{p}%
}\theta _{1}+\alpha ^{\frac{1}{q}}\varepsilon _{3}^{\frac{1}{p}}\theta _{2}%
\right] , & \alpha \lambda \leq 1-\alpha \leq 1-\lambda \left( 1-\alpha
\right) \\ 
\left[ \left( 1-\alpha \right) ^{\frac{1}{q}}\varepsilon _{1}^{\frac{1}{p}%
}\theta _{1}+\alpha ^{\frac{1}{q}}\varepsilon _{4}^{\frac{1}{p}}\theta _{2}%
\right] , & \alpha \lambda \leq 1-\lambda \left( 1-\alpha \right) \leq
1-\alpha \\ 
\left[ \left( 1-\alpha \right) ^{\frac{1}{q}}\varepsilon _{2}^{\frac{1}{p}%
}\theta _{1}+\alpha ^{\frac{1}{q}}\varepsilon _{3}^{\frac{1}{p}}\theta _{2}%
\right] , & 1-\alpha \leq \alpha \lambda \leq 1-\lambda \left( 1-\alpha
\right)%
\end{array}%
\right. ,
\end{equation*}%
where 
\begin{equation*}
\theta _{1}=A_{\alpha }^{sq}\left( a,b\right) +a^{sq},\ \theta
_{2}=A_{\alpha }^{sq}\left( a,b\right) +b^{sq},
\end{equation*}%
and $\varepsilon _{1},\ \varepsilon _{2},\ \varepsilon _{3},\ \varepsilon
_{4}$ numbers are defined as in Corollary \ref{2.4}.
\end{proposition}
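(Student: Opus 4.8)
The plan is to apply Corollary~\ref{2.4}, i.e.\ inequality~(\ref{2-12b}), to the test function $f(t)=t^{s+1}$ on $[a,b]$, in exact parallel with the way the preceding Proposition used (\ref{2-8}). First I would verify the hypotheses of Theorem~\ref{2.3} and Corollary~\ref{2.4}: the map $f$ is differentiable on $(a,b)$ with $f^{\prime }(t)=(s+1)t^{s}\in L[a,b]$, and $\left\vert f^{\prime }(t)\right\vert ^{q}=(s+1)^{q}t^{qs}$. Since $s\in (0,\tfrac{1}{q})$ we have $qs\in (0,1)$, so by Example~1 of \cite{HM94} the function $t\mapsto (s+1)^{q}t^{qs}$ belongs to $K_{qs}^{2}$; as $s\leq qs$ and the second-sense $s$-convex classes are nested ($K_{qs}^{2}\subseteq K_{s}^{2}$), it is in particular $s$-convex. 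Thus $\left\vert f^{\prime }\right\vert ^{q}$ is $h$-convex with $h(t)=t^{s}$, and Corollary~\ref{2.4} applies with this very $s$.

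Next I would rewrite the left-hand side of (\ref{2-12b}) in the language of means. Straight from the definitions, $\alpha f(a)+(1-\alpha )f(b)=\alpha a^{s+1}+(1-\alpha )b^{s+1}=A_{\alpha }(a^{s+1},b^{s+1})$, while $f(\alpha a+(1-\alpha )b)=(\alpha a+(1-\alpha )b)^{s+1}=A_{\alpha }^{s+1}(a,b)$, and $\tfrac{1}{b-a}\int_{a}^{b}t^{s+1}\,dt=\tfrac{b^{s+2}-a^{s+2}}{(s+2)(b-a)}=L_{s+1}^{s+1}(a,b)$. This reproduces precisely the quantity inside the absolute value on the left of the asserted inequality.

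It then remains to evaluate the constants $C$ and $D$ of (\ref{2-12a}) for this $f$. Using the identity $(1-\alpha )b+\alpha a=A_{\alpha }(a,b)$ together with $\left\vert f^{\prime }(x)\right\vert ^{q}=(s+1)^{q}x^{qs}$, I get $C=(1-\alpha )(s+1)^{q}\big[A_{\alpha }^{qs}(a,b)+a^{qs}\big]=(1-\alpha )(s+1)^{q}\theta _{1}$ and, symmetrically, $D=\alpha (s+1)^{q}\theta _{2}$. Taking $\tfrac{1}{q}$-th powers therefore yields $C^{1/q}=(1-\alpha )^{1/q}(s+1)\theta _{1}^{1/q}$ and $D^{1/q}=\alpha ^{1/q}(s+1)\theta _{2}^{1/q}$, so the factor $(s+1)$ splits off and combines with the prefactor $\left(\tfrac{1}{s+1}\right)^{1/q}$ of Corollary~\ref{2.4} to form $(s+1)^{1-\frac{1}{q}}$, while $(1-\alpha )^{1/q}$ and $\alpha ^{1/q}$ attach to the two summands and $\theta _{1},\theta _{2}$ enter through their $\tfrac{1}{q}$-th powers; the constants $\varepsilon _{1},\dots ,\varepsilon _{4}$, which depend only on $\alpha ,\lambda ,p$, carry over unchanged from (\ref{2-12a}). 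Distributing these over the three cases of (\ref{2-12b}) gives the stated three-part bound.

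The one point that is not pure bookkeeping is the choice of convexity exponent in the first step: one must invoke Corollary~\ref{2.4} with $h(t)=t^{s}$, the same $s$ that appears in $f(t)=t^{s+1}$, rather than with the sharp exponent $t^{qs}$. This is exactly what the hypothesis $s<\tfrac{1}{q}$ buys, since it forces $qs\in (0,1)$ and hence the inclusion $K_{qs}^{2}\subseteq K_{s}^{2}$; it is also what makes the prefactors collapse to the clean $(s+1)^{1-\frac{1}{q}}$. Had one used $t^{qs}$ directly, a residual factor $\left(\tfrac{1}{qs+1}\right)^{1/q}$ would survive and the expression would no longer match the statement.
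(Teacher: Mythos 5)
Your proposal is correct and is essentially the paper's own proof: the paper likewise applies Corollary \ref{2.4} (inequality (\ref{2-12b})) to $f(t)=t^{s+1}$, observing that $\left\vert f^{\prime }(t)\right\vert ^{q}=(s+1)^{q}t^{qs}$ is $s$-convex in the second sense because $qs\in (0,1)$, so that the factor $\left( \frac{1}{s+1}\right) ^{\frac{1}{q}}$ combines with $(s+1)$ from $C^{1/q}$, $D^{1/q}$ to give $(s+1)^{1-\frac{1}{q}}$. Your write-up is in fact more complete than the paper's one-line argument, since you make explicit the nesting $K_{qs}^{2}\subseteq K_{s}^{2}$ that the paper leaves implicit and the reason one must take $h(t)=t^{s}$ rather than $h(t)=t^{qs}$; note only that your (correct) computation produces $\theta _{1}^{\frac{1}{q}}$ and $\theta _{2}^{\frac{1}{q}}$, so the missing exponents on $\theta _{1},\theta _{2}$ in the printed statement are a typo in the paper, not a defect in your argument.
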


\begin{proof}
The assertion follows from applied the inequality (\ref{2-12b}) to the
function $f(t)=t^{s+1}$,$\ t\in \left[ a,b\right] $ and $s\in \left( 0,\frac{%
1}{q}\right) ,$ which implies that $f^{\prime }(t)=(s+1)t^{s}$,$\ \ t\in %
\left[ a,b\right] $ and $\left\vert f^{\prime }(t)\right\vert
^{q}=(s+1)^{q}t^{qs}$,$\ \ t\in \left[ a,b\right] $ is a $s-$convex function
in the second sense since $qs\in \left( 0,1\right) $ and $(s+1)^{q}>0.$
\end{proof}

\end{document}